\documentclass[12pt]{amsart}
\usepackage{amsmath}

\allowdisplaybreaks

\usepackage{amsfonts}

\usepackage{amsthm,amssymb}

\renewcommand{\baselinestretch}{\baselinestretch}
\renewcommand{\baselinestretch}{1.2}

\numberwithin{equation}{section}

\newtheorem{theorem}{Theorem}[section]
\newtheorem{lemma}[theorem]{Lemma}
\newtheorem{proposition}[theorem]{Proposition}
\newtheorem{corollary}[theorem]{Corollary}
\theoremstyle{definition}

\newtheorem{example}{Example}[section]

\usepackage[all,cmtip]{xy}
\usepackage{enumitem}
\usepackage{tikz-cd}
\usepackage{hyperref}
\usepackage{mathrsfs}
\usepackage{tikz}
\usepackage[english]{babel}
\usepackage{graphicx}
\usepackage{centernot}
\usepackage{mathtools}
\usepackage{cite}
\newlist{inlineroman}{enumerate*}{1}
\setlist[inlineroman]{itemjoin*={{, and }},afterlabel=~,label=\roman*.}

 \numberwithin{dummy}{section}

\newenvironment{red}{\relax\color{red}}{\relax}
\newenvironment{blue}{\relax\color{blue}}{\hspace*{.5ex}\relax}
\newcommand{\ber}{\begin{red}}
\newcommand{\er}{\end{red}}
\newcommand{\beb}{\begin{blue}}
\newcommand{\eb}{\end{blue}}

\begin{document}
	
\newcommand{\pr}{\partial}
\newcommand{\nl}{\vskip 1pc}
\newcommand{\co}{\mbox{co}}
\newcommand{\ol}{\overline}
\newcommand{\om}{\Omega}
\newcommand{\ra}{\rightarrow}
\newcommand{\epsil}{\varepsilon}
\makeatletter
\newcommand*{\rom}[1]{\expandafter\@slowromancap\romannumeral #1@}
\newcommand{\powerset}{\raisebox{.15\baselineskip}{\Large\ensuremath{\wp}}}
\newcounter{counter}       
\newcommand{\upperRomannumeral}[1]{\setcounter{counter}{#1}\Roman{counter}}
\newcommand{\lowerromannumeral}[1]{\setcounter{counter}{#1}\roman{counter}}
\newcommand*\circled[1]{\tikz[baseline=(char.base)]{
		\node[shape=circle,draw,inner sep=1pt] (char) {#1};}}
\makeatother
\title{Vector-Valued Period Polynomials and Zeta Values of Quadratic Fields}

\author{Yeong-Wook Kwon}

\author{Subong Lim}

\author{Wissam Raji}

\address{Institute of Basic Science, Korea University, 145 Anam-ro, Seongbuk-gu, Seoul, 02841, Republic of Korea}
\email{ywkwon196884@korea.ac.kr}

\address{Department of Mathematics Education, Sungkyunkwan University, Jongno-gu, Seoul 110-745, Republic of Korea}
\email{subong@skku.edu}

\address{Department of Mathematics, American University of Beirut (AUB) and the Number Theory Research Unit at the Center for Advanced Mathematical Sciences (CAMS) at AUB, Beirut, Lebanon}
\email{wr07@aub.edu.lb}

 \thanks{Keywords: Vector-Valued Modular Forms, Quadratic Fields, Quadratic Forms, Period Polynomials}
  \thanks{2020
 Mathematics Subject Classification: 11F11, 11F67}
  \thanks{}
  
\begin{abstract}
Let $k\ge 2$ and $N\ge 1$ be integers. Let $D$ be a positive integer that is congruent to a square modulo $4N$, and fix $\rho$ with $\rho^2\equiv D\pmod{4N}$.
In this paper, we consider two weight $2k$ cusp forms $f^{\pm}_{k,N,D,\rho}$ on $\Gamma_0(N)$ defined by sums over binary quadratic forms, and investigate the vector-valued period polynomial arising from these forms.
Our first main result gives a closed formula for this vector-valued period polynomial. The identity component of this formula is particularly explicit: it separates as the sum of a finite \textit{algebraic part} coming from some binary forms and a \textit{zeta part} involving the values at $s=k$ of certain zeta functions. 
Using this formula together with a symmetry of vector-valued period polynomials, we explicitly evaluate, for odd $k$, the difference between the zeta values corresponding to the two choices of square root of $D$ modulo $4N$, in terms of Bernoulli numbers and a finite quadratic-form sum.
Finally, under a vanishing condition on Fricke-invariant cusp forms at lower levels, we obtain a finite divisor-sum formula for the Dedekind zeta values $\zeta_{\mathbb{Q}(\sqrt{D})}(k)$ at even integers $k$.
\end{abstract}

\maketitle
%\thispagestyle{empty}
%\tableofcontents
% \clearpage
\section{Introduction}\label{sec-intro}

The arithmetic of special values of zeta functions and $L$-functions lies at the heart of modern number theory, connecting modular forms, binary quadratic forms, and algebraic invariants of number fields.  Since Hecke’s foundational work on modular forms and zeta functions of quadratic fields \cite{Hecke}, it has been clear that modular objects often provide a natural mechanism for producing explicit expressions for zeta values and for revealing their symmetries.  A particularly effective tool in this direction is the theory of \emph{period polynomials}, which encodes the critical $L$-values of cusp forms and reflects functional equations in a clear way \cite{ManinPeriods}.  When level is involved, it is natural to work with \emph{vector-valued} period polynomials, which keep track of cosets in $\Gamma_0(N)\backslash SL_2(\mathbb{Z})$ and allow one to separate arithmetic contributions that are invisible in the scalar setting.

In parallel, binary quadratic forms provide a classical and remarkably concrete approach to quadratic fields and their zeta functions.  Zagier’s work \cite{ZagierZetaQuadratic} illustrates how sums over quadratic forms of fixed discriminant give rise to modular forms and how such constructions can be used to obtain explicit formulas for arithmetic quantities.  In the presence of congruence conditions, one is led to refined families of quadratic forms adapted to $\Gamma_0(N)$, and the associated orbit decompositions naturally produce Dirichlet-type zeta functions attached to $\Gamma_0(N)$-equivalence classes.  Concretely, these are partial zeta-type sums obtained by summing suitable powers of $Q(x,y)$ over lattice points (or equivalently over representatives in a $\Gamma_0(N)$-class), and they encode the arithmetic of the corresponding orbit in a way compatible with the level structure. On the modular side, level $N$ also introduces Fricke/Atkin--Lehner involutions \cite{AtkinLehner}, which interact with parity and with the decomposition of period polynomials into even and odd parts. 

The purpose of this paper is to make these connections explicit by studying cusp forms built from $\Gamma_0(N)$-orbits of binary quadratic forms and by computing their vector-valued period polynomials in closed form.
Precise definitions of the objects and notation appearing below are collected in Section~\ref{sec-mainresults}.

Concretely, for integers $k\ge 2$, $N\ge 1$, and for $D>0$ together with $\rho$ satisfying $\rho^2\equiv D\pmod{4N}$, we consider explicit cusp forms in $S_{2k}(N)$ defined by weighted sums over quadratic forms in $\mathcal{Q}_{N,D,\rho}$, where
\[
\mathcal{Q}_{N,D,\rho}:=\{[Na,b,c]=Nax^{2}+bxy+cy^{2}:a,b,c\in\mathbb{Z},\ b^{2}-4Nac=D,\ b\equiv\rho\pmod{2N}\}.
\]
These forms admit natural combinations $f_{k,N,D,\rho}^{\pm}$ adapted to the involution $b\mapsto -b$, and are well-suited to the Fricke operator $W_N$, enabling a clean period-polynomial analysis.

Our first main result (Theorem~\ref{thm-period}) gives an explicit expression for the vector-valued period polynomial
\[
r^+_{f^+_{k,N,D,\rho}} \;+\; r^-_{f^-_{k,N,D,\rho}}.
\]
Here, for a cusp form $f\in S_{2k}(N)$, $r_f$ denotes its vector-valued period polynomial and $r_f^{\pm}$ its even/odd parts (see Section~\ref{sec-mainresults}).
The identity component of our formula is particularly explicit: it separates an \emph{algebraic part} coming from finite sums over the ranges $a>0>c$ and $a<0<c$ of quadratic forms, and a \emph{zeta part} involving the natural zeta functions $\zeta_{N,D,\rho}(s)$ built from $\Gamma_0(N)$-equivalence classes (see Section \ref{sec-mainresults}).
Conceptually, this can be viewed as a level-$N$ period--zeta connection: period-polynomial coefficients are expressed in terms of quadratic-form sums together with explicitly normalized zeta values.

As a first application, when $k$ is odd, we obtain a closed formula for the asymmetry
\[
\zeta_{N,D,\rho}(k)\;-\;\zeta_{N,D,-\rho}(k)
\]
(Theorem~\ref{thm-zetaformula}). This result isolates the precise contribution of the congruence condition $\rho$ and shows how it is detected by parity considerations in the period-polynomial setting. In particular, it exhibits how Fricke symmetry and the even/odd decomposition of periods translate into a nontrivial arithmetic identity between zeta values.

Finally, under a natural vanishing condition on Fricke-invariant cusp forms at the relevant lower levels, we apply the preceding period computations to deduce an explicit formula for the Dedekind zeta value $\zeta_{\mathbb{Q}(\sqrt{D})}(k)$ at even integers $k$ (Theorem~\ref{thm-dedekindzeta}), expressed as a finite divisor-sum with congruence restrictions. This provides computable expressions for $\zeta_{\mathbb{Q}(\sqrt{D})}(k)$ that refine the classical Hecke-type evaluations and illustrate the strength of period methods in the presence of level structure.

\section{Main Results}\label{sec-mainresults}

Let $k\geq 2$ be a positive integer and let $N$ be a positive integer. 
We write $i:=\sqrt{-1}$ and denote by $I$ the $2\times 2$ identity matrix. Additionally, we denote by $\mathbb{H}$ the complex upper half-plane.
Let
$$\Gamma_{0}(N):=\left\{\left(\begin{smallmatrix}
    \alpha & \beta \\ \gamma & \delta
\end{smallmatrix}\right)\in\mathrm{SL}_{2}(\mathbb{Z}):\gamma\equiv 0\pmod{N}\right\}.$$
Furthermore, let $D$ be a positive integer which is congruent to a square modulo $4N$ and let $\rho$ be an integer with $\rho^{2}\equiv D\pmod{4N}$. Consider the set
$$\mathcal{Q}_{N,D,\rho}:=\{[Na,b,c]=Nax^{2}+bxy+cy^{2}:a,b,c\in\mathbb{Z},~b^{2}-4Nac=D,~b\equiv\rho\pmod{2N}\}$$
on which the congruence subgroup $\Gamma_{0}(N)$ acts by
$$\left(Q\circ\left(\begin{smallmatrix}
    \alpha & \beta \\ \gamma & \delta
\end{smallmatrix}\right)\right)(x,y)=Q(\alpha x+\beta y,\gamma x+\delta y)\quad\left(Q\in\mathcal{Q}_{N,D,\rho},~\left(\begin{smallmatrix}
    \alpha & \beta \\ \gamma & \delta
\end{smallmatrix}\right)\in\Gamma_{0}(N)\right).$$

Define
\begin{align*}
    f_{k,N,D,\rho}(z)&:=\frac{D^{k-1/2}}{\pi\binom{2k-2}{k-1}}\cdot\frac{1}{2}\sum_{[Na,b,c]\in\mathcal{Q}_{N,D,\rho}}(Naz^{2}+bz+c)^{-k},\\
    f_{k,N,D,\rho}'(z)&:=\frac{D^{k-1/2}}{\pi\binom{2k-2}{k-1}}\cdot\frac{1}{2}\sum_{[Na,b,c]\in\mathcal{Q}_{N,D,\rho}'}(Naz^{2}+bz+c)^{-k},\\
    f_{k,N,D,\rho}^{+}&:=f_{k,N,D,\rho}+f_{k,N,D,\rho}',\quad f_{k,N,D,\rho}^{-}:=i(f_{k,N,D,\rho}-f_{k,N,D,\rho}'),
\end{align*}
where
$$\mathcal{Q}_{N,D,\rho}':=\{[Na,-b,c]:[Na,b,c]\in\mathcal{Q}_{N,D,\rho}\}.$$
Then $f_{k,N,D,\rho}, f_{k,N,D,\rho}', f_{k,N,D,\rho}^{\pm}\in S_{2k}(N)$, where $S_{2k}(N)$ denotes the space of cusp forms of weight $2k$ on $\Gamma_{0}(N)$.

For each $\mathcal{C}\in\mathcal{Q}_{N,D,\rho}/\Gamma_{0}(N)$, fix any representative $Q_{\mathcal{C}}\in\mathcal{C}$ and write
$$\Gamma_{0}(N)_{Q_{\mathcal{C}}}:=\{A\in\Gamma_{0}(N):Q_{\mathcal{C}}\circ A=Q_{\mathcal{C}}\}.$$
For $\mathrm{Re}(s)>1$, we define
$$\zeta_{\mathcal{C}}^{(N)}(s):=\sum\limits_{\substack{(b,d)\in\mathbb{Z}^{2}/\Gamma_{0}(N)_{Q_{\mathcal{C}}} \\ Q_{\mathcal{C}}(b,d)>0 \\ \gcd(d,N)=1}}Q_{\mathcal{C}}(b,d)^{-s},$$
and set
$$\zeta_{N,D,\rho}(s):=\sum_{\mathcal{C}\in\mathcal{Q}_{N,D,\rho}/\Gamma_{0}(N)}\zeta_{\mathcal{C}}^{(N)}(s).$$

For $A=\left(\begin{smallmatrix}
    \alpha & \beta \\ \gamma & \delta
\end{smallmatrix}\right)\in\mathrm{GL}_{2}^{+}(\mathbb{R})$ and a holomorphic function $f:\mathbb{H}\rightarrow\mathbb{C}$, put
$$(f|_{2k}A)(z):=(\det A)^{k}(\gamma z+\delta)^{-2k}f\left(\frac{\alpha z+\beta}{\gamma z+\delta}\right).$$
We set $V_{2k-2}$ to be the $\mathbb{C}$-vector space of polynomials in $X$ of degree at most $2k-2$. For simplicity, let $e_{N}:=[\mathrm{SL}_{2}(\mathbb{Z}):\Gamma_{0}(N)]$.
For $f\in S_{2k}(N)$, define the \textit{vector-valued period polynomial} $r_{f}:\mathbb{C}\rightarrow\mathbb{C}^{e_{N}}$ of $f$ by the vector
$$r_{f}(X):=(r_{f}(A)(X))_{A\in\Gamma_{0}(N)\backslash\mathrm{SL}_{2}(\mathbb{Z})}$$
where
$$r_{f}(A)(X):=\int_{0}^{i\infty}(f|_{2k}A)(z)(X-z)^{2k-2}dz\in V_{2k-2}.$$
We write $r_{f}^{+}(A)(X)$ (resp. $r_{f}^{-}(A)(X)$) for the even (resp.\ odd) part of $r_{f}(A)(X)$ i.e.
$$r_{f}^{\pm}(A)(X):=\frac{1}{2}(r_{f}(A)(X)\pm r_{f}(A)(-X)).$$

Our first main result gives an expression for each component of the vector-valued period polynomial $r_{f_{k,N,D,\rho}^{+}}^{+}+r_{f_{k,N,D,\rho}^{-}}^{-}$.

\begin{theorem}\label{thm-period}
Let $k,N,D$ and $\rho$ be as above. Assume that $D$ is not a perfect square. Set
$$c_{k,D}:=\binom{2k-2}{k-1}D^{1/2-k}\pi.$$
For $A\in\Gamma_{0}(N)\backslash \mathrm{SL}_{2}(\mathbb{Z})$, the $A$-th component of the vector-valued period polynomial $r_{f_{k,N,D,\rho}^{+}}^{+}+r_{f_{k,N,D,\rho}^{-}}^{-}$ is given by
$$r_{f_{k,N,D,\rho}^{+}}^{+}(A)(X)+r_{f_{k,N,D,\rho}^{-}}^{-}(A)(X)=\sum_{n=0}^{2k-2}i^{-n+1}\binom{2k-2}{n}r_{n}(A)X^{2k-2-n},$$
where
\begin{equation*}
    r_{n}(A)=\frac{i^{n^{2}}}{2c_{k,D}}\lim_{\varepsilon\rightarrow 0^{+}}\int_{\varepsilon}^{1/\varepsilon}\left(\sum_{[a,b,c]\in\mathcal{Q}_{N,D,\rho}\circ A}+(-1)^{n}\sum_{[a,b,c]\in\mathcal{Q}_{N,D,\rho}'\circ A}\frac{t^{n}}{(-at^{2}+bit+c)^{k}}\right)dt.
\end{equation*}
In particular,
\begin{align*}
r_{f_{k,N,D,\rho}^{+}}^{+}(I)(X)&+r_{f_{k,N,D,\rho}^{-}}^{-}(I)(X)\\
&=\sum_{\substack{[Na,b,c]\in\mathcal{Q}_{N,D,\rho}\\ a>0>c}}
(NaX^{2}-bX+c)^{k-1}-\sum_{\substack{[Na,b,c]\in\mathcal{Q}_{N,D,\rho}\\ a<0<c}}(NaX^{2}-bX+c)^{k-1}\\
&\quad-\frac{\pi }{N(2k-1)c_{k,D}}
\cdot
\frac{\zeta_{N,D,\rho}(k)+(-1)^{k}\zeta_{N,D,-\rho}(k)}{\zeta(2k)\prod_{p\mid N}(1-p^{-2k})}X^{2k-2}\\
&\quad+\frac{\pi }{N^{k}(2k-1)c_{k,D}}
\cdot
\frac{\zeta_{N,D,-\rho}(k)+(-1)^{k}\zeta_{N,D,\rho}(k)}{\zeta(2k)\prod_{p\mid N}(1-p^{-2k})}.
\end{align*}
\end{theorem}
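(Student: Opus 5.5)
Expanding $(X-z)^{2k-2}$ binomially shows that the $A$-th component of $r_f$ has coefficients given by the moments $\int_0^{i\infty}(f|_{2k}A)(z)z^n\,dz$. I will compute these moments for $f_{k,N,D,\rho}$ and $f'_{k,N,D,\rho}$ separately, and then combine them.

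\textbf{Step 1: transport to the line $z=it$ and combine.} Since $\sum_{Q\in\mathcal{Q}_{N,D,\rho}}Q(z,1)^{-k}$ transforms under $A\in\mathrm{SL}_2(\mathbb{Z})$ as $\sum_{Q\in\mathcal{Q}_{N,D,\rho}\circ A}Q(z,1)^{-k}$, we have $f|_{2k}A$ as an explicit sum over $\mathcal{Q}_{N,D,\rho}\circ A$. Substituting $z=it$ gives $Q(it,1)=-at^2+bit+c$ and $z^n\,dz=i^{n+1}t^n\,dt$. Sums and integrals cannot be interchanged directly, because the series converges only conditionally on the boundary. So I truncate to $[\varepsilon,1/\varepsilon]$, where absolute convergence on compacta allows the interchange, and take the limit at the end.

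The combination $r^+_{f^+}+r^-_{f^-}$ picks out the coefficients of $X^{2k-2-n}$ as follows. For $n$ even it takes the coefficient from $f+f'$; for $n$ odd it takes $i$ times the coefficient from $f-f'$. This produces exactly the sign $(-1)^n$ in front of the $\mathcal{Q}'$-sum. The remaining powers of $i$ ($i^{n+1}$ from $dz$, the factor $(-1)^n$ from $(X-z)^{2k-2}$, and the extra $i$ for odd $n$) should assemble into $i^{-n+1}\cdot i^{n^2}$. Note that $i^{n^2}$ equals $1$ or $i$ according to parity. The normalizing constant $\frac{D^{k-1/2}}{2\pi\binom{2k-2}{k-1}}$ is $1/(2c_{k,D})$. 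This proves the general formula, and the computation is routine bookkeeping.

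\textbf{Step 2: the identity component.} For $A=I$, I will evaluate the truncated integrals term by term. Following Zagier's method (and Kohnen--Zagier for level one), I split the forms $[Na,b,c]$ into three groups.

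The first group consists of forms with $ac<0$. Here the quadratic $-Nat^2+bit+c$ has no real zeros of a problematic kind. The semicircle $\{z: Na|z|^2+b\,\mathrm{Re}z+c=0\}$ crosses the imaginary axis. Shifting the contour across the pole, or equivalently using the classical identity $\int_0^{i\infty}Q(z,1)^{-k}(X-z)^{2k-2}\,dz$ plus a residue, produces the polynomial $\pm c_{k,D}\,(NaX^2-bX+c)^{k-1}$. The sign depends on whether $a>0>c$ or $a<0<c$. Only finitely many forms with $ac<0$ and fixed discriminant exist, so this part is finite. I also have to check that the $b\mapsto-b$ contributions of $\mathcal{Q}'$ combine to give $NaX^2-bX+c$ with the stated overall coefficients.

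The second group consists of forms with $ac>0$. Their contributions are integrals whose sum over $\Gamma_0(N)$-orbits telescopes in the usual Zagier fashion. The limit $\varepsilon\to0$ leaves only boundary terms at $0$ and $\infty$, and these sit in the coefficients $n=0$ and $n=2k-2$.

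The third group consists of forms with $a=0$ or $c=0$. These contribute only to $n=0$ or $n=2k-2$ as well.

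\textbf{Step 3 (main obstacle): the constant and top coefficients.} The hardest step is identifying the $n=0$ and $n=2k-2$ boundary terms with $\zeta_{N,D,\pm\rho}(k)$. My plan is to write $\lim_{\varepsilon\to0}$ of the truncated sum as a conditionally convergent Eisenstein-type sum. I will then unfold it by the $\Gamma_0(N)$-orbit decomposition $\mathcal{Q}_{N,D,\rho}=\bigsqcup_{\mathcal{C}}Q_{\mathcal{C}}\circ\Gamma_{Q}\backslash\Gamma_0(N)$, and use the standard identity
\[
\sum_{\substack{(b,d)\,\mathrm{coprime}\\ \gcd(d,N)=1}}\ \leftrightarrow\ \Gamma_\infty\backslash\Gamma_0(N).
\]
The coefficient at $X^{2k-2}$ comes from the cusp $\infty$. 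Removing the non-primitivity of $(b,d)$ introduces the factor $\zeta(2k)\prod_{p\mid N}(1-p^{-2k})$. The $t$-integral $\int_0^\infty(\dots)^{-k}\,dt$ over the first column produces $\pi/((2k-1)N)$ times $Q(b,d)^{-k}$. Here the sign $(-1)^k$ pairs $\rho$ with $-\rho$, because of the $\mathcal{Q}'$ term. The constant coefficient is handled via the Fricke involution $W_N$. It maps $\mathcal{Q}_{N,D,\rho}$ to $\mathcal{Q}_{N,D,-\rho}$ up to the swap $[Na,b,c]\mapsto[Nc,-b,a]$, which exchanges $0$ and $\infty$. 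This accounts for the factor $N^{-k}$ and for the swapped roles of $\pm\rho$. The delicate points are justifying the rearrangement of the conditionally convergent sums, which I would do via a Hecke trick with $|Q|^{-s}$ and analytic continuation to $s=0$, and tracking the orientation signs.
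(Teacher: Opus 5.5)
Your Step~1 is fine and agrees with the paper: you truncate to $[\varepsilon,1/\varepsilon]$, write $f|_{2k}A$ as a sum over $\mathcal{Q}_{N,D,\rho}\circ A$, and collect the powers of $i$. The gap is in Steps~2--3, where the device that makes the identity component computable is missing. Since $\mathcal{Q}'_{N,D,\rho}=\{[Na,-b,c]\}$, the substitution $t\mapsto -t$ gives $(-1)^n\sum_{\mathcal{Q}'_{N,D,\rho}}\int_{\varepsilon}^{1/\varepsilon}=\sum_{\mathcal{Q}_{N,D,\rho}}\int_{-1/\varepsilon}^{-\varepsilon}$. This is exactly why the combination $r^{+}_{f^{+}}+r^{-}_{f^{-}}$ is the right one, and it rewrites the truncated integral as $\sum_{Q\in\mathcal{Q}_{N,D,\rho}}\bigl(\int_{\mathbb{R}}-\int_{|t|<\varepsilon}-\int_{|t|>1/\varepsilon}\bigr)$. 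Only full-line integrals can be evaluated form by form. For $ac>0$ the poles $i(b\pm\sqrt{D})/(2Na)$ lie on the same side of $\mathbb{R}$, so the integral is exactly $0$ by Cauchy; no telescoping over orbits is involved. For the finitely many forms with $ac<0$ it is a single residue, giving $d_{k,n}(c,-b,Na)$ as in Kohnen--Zagier. A ``contour shift plus residue'' applied to the half-line integral $\int_0^{i\infty}Q(z,1)^{-k}(X-z)^{2k-2}\,dz$ of a single form does not produce a polynomial: it contains logarithmic and rational terms in the roots, which cancel only after pairing $[Na,b,c]$ with $[Na,-b,c]$ as above. Also, your third group is empty, since $a=0$ or $c=0$ forces $b^2=D$. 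This is where the hypothesis that $D$ is not a square is used.

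For the boundary terms, the Hecke-trick/unfolding plan cannot be completed as stated, because the answer is a conditional-convergence effect that depends on the order of limits. In the paper, the tails $|t|>1/\varepsilon$ are moved by $W_N$ to $|t|<\varepsilon/N$. Here $W_N$ acts by $[Na,b,c]\mapsto[Nc,-b,a]$, sending $\mathcal{Q}_{N,D,\rho}$ to $\mathcal{Q}_{N,D,-\rho}$, and the moved integrals carry $t^{2k-2-n}$ and a factor $(-1)^kN^{k-1-n}$. Forms with $c<0$ are replaced by $-Q\in\mathcal{Q}_{N,D,-\rho}$; it is this step, not the $\mathcal{Q}'$ term, that produces $\zeta_{N,D,\rho}(k)+(-1)^k\zeta_{N,D,-\rho}(k)$. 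Then, in Lemma~\ref{lem-limit}, the forms in a class with fixed $c>0$ are parametrized by $\gamma_r=\left(\begin{smallmatrix}1&0\\ Nr&1\end{smallmatrix}\right)$, which fixes $c$ and shifts $b$ by $2rNc$. For $n=0$ the sum over $r$ of $\int_{-\varepsilon}^{\varepsilon}$ is then a Riemann sum in $x=\varepsilon(rN+b/2c)$ for $\frac{1}{N}\int_{\mathbb{R}}\bigl(\int_{-1}^{1}(1+ixu)^{-2k}\,du\bigr)\,dx=\frac{2\pi}{N(2k-1)}$; for $n>0$ an extra factor $\varepsilon^{n}$ kills it. Taken in the other order, the same double integral is $0$, because $\int_{\mathbb{R}}(1+ixu)^{-2k}\,dx=0$ for $u\neq 0$. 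So any Fubini-type unfolding, or a $|Q|^{-s}$ regularization continued to $s=0$, must reproduce this particular order, and will in general produce anomaly terms your sketch does not control. Lastly, $c=Q_{\mathcal{C}}(b,d)$ where $(b,d)$ is the second column of $A\in\Gamma_0(N)$. Such columns parametrize $\Gamma_0(N)$ modulo the stabilizer of the cusp $0$, not $\Gamma_\infty\backslash\Gamma_0(N)$. Accordingly, the $X^{2k-2}$ coefficient ($n=0$) comes from $t\to 0$, and the constant term comes from $t\to\infty$ via the Fricke transfer — the reverse of your attribution.
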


As an application of Theorem \ref{thm-period}, we obtain a closed formula for the difference $\zeta_{N,D,\rho}(k)-\zeta_{N,D,-\rho}(k)$ when $k$ is odd.

\begin{theorem}\label{thm-zetaformula}
    Let $k,N,D$ and $\rho$ be as in Theorem \ref{thm-period}. If $k$ is odd, then
    \begin{align*}
        \zeta_{N,D,\rho}(k)-\zeta_{N,D,-\rho}(k)&=\frac{2^{2k-1}N^{k}(2k-1)D^{1/2-k}B_{2k}\pi^{2k}}{(2k)!}\binom{2k-2}{k-1}\\
        &\qquad\qquad\times\prod_{p\mid N}(1-p^{-2k})\left(\sum\limits_{\substack{[Na,b,c]\in\mathcal{Q}_{N,D,\rho} \\ a>0>c}}c^{k-1}-\sum\limits_{\substack{[Na,b,c]\in\mathcal{Q}_{N,D,\rho} \\ a<0<c}}c^{k-1}\right),
    \end{align*}
    where $B_{n}$ denotes the $n$th Bernoulli number.
\end{theorem}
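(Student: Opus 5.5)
The plan is to specialize Theorem~\ref{thm-period} to the identity coset $A=I$ and compare the constant term with the coefficient of $X^{2k-2}$. A Fricke-type symmetry of the period polynomial relates these two coefficients, and this gives a linear relation in which the zeta values and the finite algebraic sums both appear.

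For $k$ odd, the zeta parts in Theorem~\ref{thm-period} simplify. The coefficient of $X^{2k-2}$ has zeta part
\[
-\frac{\pi}{N(2k-1)c_{k,D}}\cdot\frac{\zeta_{N,D,\rho}(k)-\zeta_{N,D,-\rho}(k)}{\zeta(2k)\prod_{p\mid N}(1-p^{-2k})}.
\]
The constant term has zeta part
\[
-\frac{\pi}{N^{k}(2k-1)c_{k,D}}\cdot\frac{\zeta_{N,D,\rho}(k)-\zeta_{N,D,-\rho}(k)}{\zeta(2k)\prod_{p\mid N}(1-p^{-2k})}.
\]
The algebraic parts contribute $\sum(Na)^{k-1}$ to the $X^{2k-2}$ coefficient and $\sum c^{k-1}$ to the constant term, each with the signs $\pm$ attached to the ranges $a>0>c$ and $a<0<c$.

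The key step is to produce the symmetry. I will use the Fricke involution $W_N=\left(\begin{smallmatrix}0&-1\\ N&0\end{smallmatrix}\right)$.
\begin{itemize}
\item Substituting $z\mapsto -1/(Nz)$ in $\int_0^{i\infty}f(z)(X-z)^{2k-2}\,dz$ gives an identity of the form $r_{f|W_N}(I)(X)=\pm N^{k-1}X^{2k-2}\,r_f(I)\!\left(-1/(NX)\right)$, up to an explicit power of $N$.
\item The action $[Na,b,c]\circ W_N=[Nc,-b,a]$, with the overall scaling by $N$ absorbed into the weight factor, carries $\mathcal{Q}_{N,D,\rho}$ to $\mathcal{Q}'_{N,D,\rho}$ and conversely. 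Hence $f_{k,N,D,\rho}|W_N=(-1)^k f'_{k,N,D,\rho}$, possibly with an extra sign.
\item It follows that $f^{+}$ and $f^{-}$ are $W_N$-eigenforms with eigenvalues of opposite sign. Combined with the even/odd split and $k$ odd, this shows the $X^{2k-2}$ coefficient of $r^+_{f^+}(I)+r^-_{f^-}(I)$ equals $\varepsilon N^{k-1}$ times its constant term, for an explicit sign $\varepsilon$.
\end{itemize}

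Next I substitute the formula from Theorem~\ref{thm-period} into this relation. Under $a\leftrightarrow c$, the map $[Na,b,c]\mapsto[Nc,-b,a]$ preserves the two ranges $a>0>c$ and $a<0<c$. So the sum $\sum(Na)^{k-1}$ becomes $N^{k-1}$ times a $c^{k-1}$-sum, but over $\mathcal{Q}_{N,D,-\rho}$ rather than $\mathcal{Q}_{N,D,\rho}$. I expect to reconcile this using $[Na,b,c]\mapsto[-Na,b,-c]$ or $[Na,-b,c]$, together with the fact that $k-1$ is even. The two algebraic contributions should then add, with total $2N^{k-1}\bigl(\sum_{a>0>c}c^{k-1}-\sum_{a<0<c}c^{k-1}\bigr)$. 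The two zeta contributions should also add, to $2$ times the constant-term zeta part times $N^{k-1}$, and the factors of $2$ cancel. Solving gives
\[
\zeta_{N,D,\rho}(k)-\zeta_{N,D,-\rho}(k)=\frac{N^{k}(2k-1)c_{k,D}}{\pi}\,\zeta(2k)\prod_{p\mid N}(1-p^{-2k})\Bigl(\sum_{a>0>c}c^{k-1}-\sum_{a<0<c}c^{k-1}\Bigr).
\]
Finally, inserting $\zeta(2k)=2^{2k-1}\pi^{2k}B_{2k}/(2k)!$ (valid for $k$ odd) and $c_{k,D}=\binom{2k-2}{k-1}D^{1/2-k}\pi$ yields the stated formula.

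I expect the main obstacle to be pinning down the exact signs in the Fricke symmetry. These are the eigenvalue of $f^{\pm}$ under $W_N$, the factor $i^{-n+1}$ in Theorem~\ref{thm-period}, and the sign from $X\mapsto -1/(NX)$. They must make the zeta terms reinforce rather than cancel, and make the algebraic sums line up. If the direct eigenform approach is messy, I would fall back on the general formula for $r_n(A)$ in Theorem~\ref{thm-period} with $A=W_N$-related cosets, and substitute $t\mapsto 1/(Nt)$ in the integral to compare $r_0$ with $r_{2k-2}$ directly.
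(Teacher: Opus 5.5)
Your plan follows the paper's proof in outline. $f^{+}_{k,N,D,\rho}$ is Fricke-invariant, so $p_{0,f^{+}}=-N^{k-1}p_{2k-2,f^{+}}$. These two numbers are the $X^{2k-2}$-coefficient and the constant term of $r^{+}_{f^{+}}(I)+r^{-}_{f^{-}}(I)$. One then substitutes Theorem~\ref{thm-period}, converts the $(Na)^{k-1}$-sum into a $c^{k-1}$-sum, and solves.

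Deriving the Fricke relation by substituting $z\mapsto -1/(Nz)$ in the period integral is a harmless variant of Lemma~\ref{lem-coeffrel}, which uses $\Lambda(s,f)=(-1)^k\Lambda(2k-s,f)$. Carried out, it gives $p_{f|_{2k}W_N}(X)=-N^{k-1}X^{2k-2}p_f(-1/(NX))$.

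The signs you leave open are settled as in Lemma~\ref{lem-frickeinv}: $f_{k,N,D,\rho}|_{2k}W_N=f'_{k,N,D,\rho}$ exactly, with no $(-1)^k$. The reason is that $Q(-1/(Nz),1)^{-k}=N^{k}z^{2k}(Q\circ W_N)(z,1)^{-k}$ is cancelled exactly by the factor $N^{-k}z^{-2k}$. So $f^{+}$ has eigenvalue $+1$ and your $\varepsilon$ is $-1$, which is exactly what makes the two zeta terms add. Since this sign carries the whole argument, it has to be computed rather than expected.

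One step is wrong as written. The map $[Na,b,c]\mapsto[Nc,-b,a]$ does not preserve the ranges; it exchanges them, because the new leading coefficient is $Nc$. Write $\Sigma_{\pm\rho}$ for $\sum_{a>0>c}c^{k-1}-\sum_{a<0<c}c^{k-1}$ taken over $\mathcal{Q}_{N,D,\pm\rho}$. Then the Fricke map actually gives $\sum_{a>0>c}(Na)^{k-1}-\sum_{a<0<c}(Na)^{k-1}=-N^{k-1}\Sigma_{-\rho}$.

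To turn this into $+N^{k-1}\Sigma_{\rho}$ you need negation $Q\mapsto -Q$, which gives $\Sigma_{-\rho}=(-1)^{k}\Sigma_{\rho}$. The map $[Na,b,c]\mapsto[Na,-b,c]$ on your list instead gives $\Sigma_{-\rho}=\Sigma_{\rho}$. That yields the opposite sign: the algebraic terms cancel instead of adding, and you would land on $\zeta_{N,D,\rho}(k)=\zeta_{N,D,-\rho}(k)$ rather than the stated formula.

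The paper avoids this with the single involution $\iota([Na,b,c])=[-Nc,b,-a]$ of $\mathcal{Q}_{N,D,\rho}$ itself, which is negation composed with $W_N$. It preserves both ranges and gives $(-1)^{k-1}N^{k-1}\Sigma_{\rho}=N^{k-1}\Sigma_\rho$ directly.

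The two routes can give opposite signs without contradiction, and the reason is worth seeing. For odd $k$, the map $[Na,b,c]\mapsto[-Na,b,-c]$ (also on your list) is an involution of $\mathcal{Q}_{N,D,\rho}$ that exchanges the two ranges and preserves $c^{k-1}$, so $\Sigma_\rho=0$. Moreover, negation gives $f'_{k,N,D,\rho}=f_{k,N,D,-\rho}=(-1)^kf_{k,N,D,\rho}$, so $f^{+}_{k,N,D,\rho}=0$ for odd $k$. Likewise $(x,y)\mapsto(x,-y)$ shows $\zeta_{N,D,\rho}=\zeta_{N,D,-\rho}$.

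Both sides of the theorem therefore vanish for odd $k$. That is why your slip does not change the final formula, and also why agreement with that formula cannot check the sign bookkeeping. The bookkeeping has to be done as above.
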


For a positive integer $M$, let $W_{M}$ denote the Fricke involution:
$$W_{M}:=\left(\begin{smallmatrix}
    0 & -1/\sqrt{M} \\ \sqrt{M} & 0
\end{smallmatrix}\right).$$
Next, given a positive divisor $d$ of $N$, define
$$S_{2k}^{+}(N/d):=\{f\in S_{2k}(N/d):f|_{2k}W_{N/d}=f\}.$$
Using Theorem \ref{thm-period}, we obtain the following formula for the value of the Dedekind zeta function $\zeta_{\mathbb{Q}(\sqrt{D})}(s)$ at $s=k$.

\begin{theorem}\label{thm-dedekindzeta}
    Let $k$ be an even positive integer, let $N$ be a positive integer, and let $D>0$ be a fundamental discriminant such that $D\equiv 1\pmod{4N}$. Assume that $S_{2k}^{+}(N/d)=\{0\}$ for every positive divisor $d$ of $N$.
    Then
    \begin{align*}
        \zeta_{\mathbb{Q}(\sqrt{D})}(k)&=(2k-1)D^{1/2-k}\zeta(2k)N^{k}\binom{2k-2}{k-1}\\
        &\quad\times\sum_{d\mid N}d^{-2k}\prod_{p\mid(N/d)}(1-p^{-2k})\sum\limits_{\substack{|b|<\sqrt{D} \\ b\equiv 1\pmod{2N/d}}}\sigma_{k-1}\left(\frac{d(D-b^{2})}{4N}\right).
    \end{align*}
\end{theorem}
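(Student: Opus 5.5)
We must prove Theorem \ref{thm-dedekindzeta}. The plan is to use Theorem \ref{thm-period} at every level $M=N/d$, and to kill the cusp forms by the vanishing hypothesis.

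\textbf{Step 1: relate the Dedekind zeta value to the level-$M$ zeta functions.} Since $D\equiv 1\pmod{4N}$, we may take $\rho=1$, and $D\equiv1\pmod{4M}$ for every $M\mid N$. For a fundamental discriminant $D$, every $\Gamma_0(M)$-class in $\mathcal{Q}_{M,D,\pm1}$ is primitive. Restricting $Q(b,d)$ to $\gcd(d,M)=1$ and summing over classes should give a classical identity (Zagier, \cite{ZagierZetaQuadratic}) of the form
\[
\zeta_{M,D,1}(s)+\zeta_{M,D,-1}(s)=c_M(s)\,\zeta_{\mathbb{Q}(\sqrt D)}(s).
\]
The factor $c_M(s)$ should be something like $\zeta(2s)^{-1}$ times local factors at $p\mid M$, or more directly a combination involving the coprimality condition. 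I would establish this by the usual correspondence between $\Gamma_0(M)$-classes of forms with $b\equiv\rho \pmod{2M}$ and $\mathrm{SL}_2(\mathbb{Z})$-classes together with a choice of root $\rho$ modulo $2M$. Representations with $\gcd(d,M)=1$ correspond to ideals whose norm is coprime to the relevant part of $M$. The inverse (M\"obius-type) relation, obtained by removing the coprimality at each $p\mid M$, is exactly what produces the sum $\sum_{d\mid N}d^{-2k}\prod_{p\mid N/d}(1-p^{-2k})$ in the statement.

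\textbf{Step 2: apply Theorem \ref{thm-period} at level $M$.} For $k$ even, the constant term of the identity component gives
\[
\frac{\pi}{M^{k}(2k-1)c_{k,D}}\cdot\frac{\zeta_{M,D,1}(k)+\zeta_{M,D,-1}(k)}{\zeta(2k)\prod_{p\mid M}(1-p^{-2k})}.
\]
The hypothesis $S_{2k}^{+}(M)=\{0\}$ must force this combination of periods to be computable. I would check how $W_M$ acts on $f^{\pm}_{k,M,D,1}$. Since $[Ma,b,c]\circ W_M\sim[Mc,-b,a]$, the map $f\mapsto f'$ should be intertwined with $W_M$ up to a sign $(-1)^k$. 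Hence, for $k$ even, $f^{+}$ is Fricke-invariant, so $f^{+}_{k,M,D,1}=0$ and $r_{f^+}^{+}=0$. What remains is the odd part $r^-_{f^-}$, which contributes only odd powers of $X$. Therefore the even-degree coefficients of the right-hand side of Theorem \ref{thm-period}, and in particular the constant term, must vanish.

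\textbf{Step 3: solve for the zeta value.} Setting $X=0$, the constant term of the algebraic part is
\[
\sum_{a>0>c}c^{k-1}-\sum_{a<0<c}c^{k-1}=-\sum_{a>0>c}|c|^{k-1}-\sum_{a<0<c}c^{k-1}\quad(k\text{ even}).
\]
Reparametrizing by $b$ with $|b|<\sqrt D$, $b\equiv1\pmod{2M}$, and $ac=(b^2-D)/(4M)$, the condition $ac<0$ holds automatically. Summing $|c|^{k-1}$ over both sign patterns then gives $\sigma_{k-1}\bigl((D-b^2)/(4M)\bigr)$, up to a factor of $2$ that must be tracked against the definition of $c_{k,D}$. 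Equating this with the zeta term yields $\zeta_{M,D,1}(k)+\zeta_{M,D,-1}(k)$ as a finite divisor sum. Substituting $M=N/d$ and combining over $d\mid N$ via Step 1 gives the formula; note that $(D-b^2)/(4N/d)=d(D-b^2)/(4N)$, consistent with the statement.

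\textbf{Main obstacle.} I expect Step 1 to be hardest: the precise identity expressing $\zeta_{\mathbb{Q}(\sqrt D)}(k)$ through the level-$(N/d)$ sums with the weights $d^{-2k}\prod_{p\mid N/d}(1-p^{-2k})$, including the bookkeeping for non-coprime $d$ and units and stabilizers. I would first verify the identity for $N=p$ prime and then induct multiplicatively. A secondary check is the sign of $W_M$ on $f^{+}$ in Step 2. If $W_M$ instead fixes $f^-$, I would use the $X^{2k-2}$ coefficient analogously.
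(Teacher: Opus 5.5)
Your Steps 2 and 3 are sound and are essentially the paper's argument at level $M=N/d$. The form $f^{+}_{k,M,D,1}$ is Fricke-invariant; in fact $f_{k,M,D,1}|_{2k}W_M=f'_{k,M,D,1}$ with no sign (Lemma \ref{lem-frickeinv}). Hence every even-degree coefficient of the identity component in Theorem \ref{thm-period} vanishes. The paper uses the $X^{2k-2}$ coefficient and sums $a^{k-1}$, while you use the constant term and sum $|c|^{k-1}$. Either way the algebraic part is $2\sum_b\sigma_{k-1}\bigl(\frac{D-b^2}{4M}\bigr)$, so the factor $2$ is real, and one gets
\[
\zeta_{M,D,1}(k)+\zeta_{M,D,-1}(k)=2M^{k}(2k-1)\binom{2k-2}{k-1}D^{1/2-k}\zeta(2k)\prod_{p\mid M}(1-p^{-2k})\sum_{\substack{|b|<\sqrt D\\ b\equiv 1\ (\mathrm{mod}\ 2M)}}\sigma_{k-1}\Bigl(\frac{D-b^{2}}{4M}\Bigr).
\]
The genuine gap is Step 1. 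It carries all of the arithmetic linking these sums to $\zeta_{\mathbb{Q}(\sqrt D)}$, you only guess at it, and the forms you suggest for $c_M$ would give the wrong constant.

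\begin{itemize}
\item There is no $\zeta(2s)^{-1}$. The sum $\zeta_{M,D,\rho}$ already runs over all pairs, primitive or not; the factor $\zeta(2k)\prod_{p\mid M}(1-p^{-2k})$ in Theorem \ref{thm-period} is just the normalization from Lemma \ref{lem-limit}.
\item The condition $\gcd(v,M)=1$ on the second coordinate does not mean the corresponding ideal has norm prime to $M$. For $D=17$, $M=2$ and $Q=[2,1,-2]\in\mathcal{Q}_{2,17,1}$, one has $Q(2,1)=8$.
\end{itemize}

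What is actually needed is the following:
\begin{enumerate}
\item[(i)] the bijection $\mathcal{Q}_{M,D,1}/\Gamma_0(M)\cong\mathcal{Q}_D^0/\mathrm{SL}_2(\mathbb{Z})\cong\mathrm{Cl}^+(\mathcal{O}_K)$ (here $m=1$, and all forms are primitive since $D$ is fundamental);
\item[(ii)] $\Gamma_0(M)_Q=\mathrm{SL}_2(\mathbb{Z})_Q$, which follows from Lemma \ref{lem-stabform} since $M\mid A$; then orbits of pairs with $Q(u,v)>0$ match integral ideals in the narrow class via $\mathfrak{b}_{u,v}(Q)$ (Lemma \ref{lem-pairtoideal});
\item[(iii)] a correct treatment of the congruence condition on $v$.
\end{enumerate}

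The paper handles (iii) not by inverting coprimality but by splitting the \emph{unrestricted} sum according to $\gcd(v,N)=d$. Using $Q(u,dv)=d\,Q^{(d)}(u,v)$, Lemma \ref{lem}, Corollary \ref{cor-indexone}, and Lemma \ref{lem-idealtranslation} (which makes $\mathcal{C}\mapsto\mathcal{C}^{(d)}$ a bijection of class sets), it obtains $\zeta_K(s)=\sum_{d\mid N}d^{-s}\zeta_{N/d,D,\pm1}(s)$. Hence $\zeta_K(k)=\frac12\sum_{d\mid N}d^{-k}\bigl(\zeta_{N/d,D,1}(k)+\zeta_{N/d,D,-1}(k)\bigr)$.

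Your account of where the weights come from is also off. A M\"obius inversion of coprimality produces factors $1-p^{-k}$, never $1-p^{-2k}$. In the statement, one $d^{-k}$ comes from the decomposition above. The other $d^{-k}$ (from $(N/d)^k=N^kd^{-k}$) and $\prod_{p\mid N/d}(1-p^{-2k})$ come from Theorem \ref{thm-period} at level $N/d$.

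Your single-level formulation can be repaired, as follows.
\begin{itemize}
\item Each $p\mid M$ splits, and $p\nmid v$ holds iff $\mathfrak{b}_{u,v}(Q)$ is not divisible by the one prime $\mathfrak{p}_\rho=(p,\frac{-\rho+\sqrt D}{2})$ of norm $p$.
\item Hence $\zeta_{M,D,\pm1}(s)=\prod_{p\mid M}(1-p^{-s})\,\zeta_K(s)$, i.e.\ $c_M(s)=2\prod_{p\mid M}(1-p^{-s})$.
\item Combined with $\sum_{d\mid N}d^{-s}\prod_{p\mid N/d}(1-p^{-s})=1$, this recovers exactly the paper's combination.
\item Taking $M=N$ alone already gives a valid, differently shaped formula that needs only $S_{2k}^+(N)=\{0\}$.
\end{itemize}
None of this is in your proposal, and Step 1 as written is unproved.
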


In the following example, we utilize Theorem \ref{thm-dedekindzeta} to evaluate $\zeta_{\mathbb{Q}(\sqrt{17})}(2)$ and $\zeta_{\mathbb{Q}(\sqrt{145})}(2)$.

\begin{example}
    \begin{enumerate}
        \item[(i)] Let $N=2$ and $D=17$. Then $D\equiv 1\pmod{4N}$. By \cite[Lemma 2.2]{CK13},
        $$\dim S_{4}^{+}(2)=0,$$
        and, by \cite[Corollary 8.8]{K92},
        $$\dim S_{4}^{+}(2/2)=\dim S_{4}^{+}(1)=\dim S_{4}(\mathrm{SL}_{2}(\mathbb{Z}))=0.$$
        Applying Theorem \ref{thm-dedekindzeta}, we obtain
        \begin{align*}
            \zeta_{\mathbb{Q}(\sqrt{17})}(2)=&24\times  17^{-3/2}\times\zeta(4)\\
            &\times\left(\frac{15}{16}\sum\limits_{\substack{|b|<\sqrt{17} \\ b\equiv 1\pmod{4}}}\sigma_{1}\left(\frac{17-b^{2}}{8}\right)+\frac{1}{16}\sum\limits_{\substack{|b|<\sqrt{17} \\ b\equiv 1\pmod{2}}}\sigma_{1}\left(\frac{17-b^{2}}{4}\right)\right).
        \end{align*}
        Note that
        \begin{align*}
            \{b\in\mathbb{Z}:|b|<\sqrt{17},~b\equiv 1\pmod{2}\}&=\{-3,-1,1,3\},\\
            \{b\in\mathbb{Z}:|b|<\sqrt{17},~b\equiv 1\pmod{4}\}&=\{1,-3\}.
        \end{align*}
        Thus,
        \begin{align*}
            \sum\limits_{\substack{|b|<\sqrt{17} \\ b\equiv 1\pmod{4}}}\sigma_{1}\left(\frac{17-b^{2}}{8}\right)&=\sigma_{1}(2)+\sigma_{1}(1)=4,\\
            \sum\limits_{\substack{|b|<\sqrt{17} \\ b\equiv 1\pmod{2}}}\sigma_{1}\left(\frac{17-b^{2}}{4}\right)&=2\sigma_{1}(4)+2\sigma_{1}(2)=20.
        \end{align*}
        Meanwhile, it is known that $\zeta(4)=\pi^{4}/90$. Consequently,
        $$\zeta_{\mathbb{Q}(\sqrt{17})}(2)=24\times 17^{-3/2}\times\frac{\pi^{4}}{90}\times\left(\frac{15}{16}\times 4+\frac{1}{16}\times 20\right)=\frac{4\pi^{4}}{51\sqrt{17}}.$$

        \item[(ii)] Let $N=3$ and $D=145$. Then $4N=12$ and $D\equiv 1\pmod{12}$. Using \cite[Lemma 2.2]{CK13} and \cite[Corollary 8.8]{K92}, one can check that
        $$\dim S_{4}^{+}(3)=\dim S_{4}^{+}(3/3)=0.$$
        Hence, by Theorem \ref{thm-dedekindzeta}, we have
        \begin{align*}
            \zeta_{\mathbb{Q}(\sqrt{145})}(2)=&54\times 145^{-3/2}\times\zeta(4)\\
            &\times\left(\frac{80}{81}\sum\limits_{\substack{|b|<\sqrt{145}\\ b\equiv 1\pmod{6}}}\sigma_{1}\left(\frac{145-b^{2}}{12}\right)+\frac{1}{81}\sum\limits_{\substack{|b|<\sqrt{145}\\ b\equiv 1\pmod{2}}}\sigma_{1}\left(\frac{145-b^{2}}{4}\right)\right).
        \end{align*}
        A similar computation to part (i) shows that
        \begin{equation*}
            \sum\limits_{\substack{|b|<\sqrt{145}\\ b\equiv 1\pmod{6}}}
            \sigma_{1}\left(\frac{145-b^{2}}{12}\right)=64\quad\text{and}\quad\sum\limits_{\substack{|b|<\sqrt{145}\\ b\equiv 1\pmod{2}}}\sigma_{1}\left(\frac{145-b^{2}}{4}\right)=640.
        \end{equation*}
        Therefore,
        $$\zeta_{\mathbb{Q}(\sqrt{145})}(2)=54\times 145^{-3/2}\times\frac{\pi^{4}}{90}\times\left(\frac{80}{81}\times 64+\frac{1}{81}\times 640\right)=\frac{128\pi^{4}}{435\sqrt{145}}.$$ 
    \end{enumerate}
\end{example}

The remainder of this paper is organized as follows. In Section \ref{sec-prel}, we review vector-valued period polynomials and binary quadratic forms. 
In Section \ref{sec-period}, we prove Theorem \ref{thm-period}. Next, in Section \ref{sec-zetaformula}, we establish Theorem \ref{thm-zetaformula}. Finally, in Section \ref{sec-dedekindzeta}, we provide a proof of Theorem \ref{thm-dedekindzeta}.

\section{Preliminaries}\label{sec-prel}

Let $k\geq 2$ be a positive integer, and let $V_{2k-2}$ be the space of polynomials of degree at most $2k-2$ with coefficients in $\mathbb{C}$.
Then the map
\begin{align*}
    V_{2k-2}\times\mathrm{GL}_{2}(\mathbb{R})&\rightarrow V_{2k-2}\\
    \left(P,g=\left(\begin{smallmatrix}
        a & b \\ c & d
    \end{smallmatrix}\right)\right)&\mapsto (P|_{2-2k}g)(z):=(cz+d)^{2k-2}P\left(\frac{az+b}{cz+d}\right)
\end{align*}
is a right group action of $\mathrm{GL}_{2}(\mathbb{R})$ on $V_{2k-2}$.
Let $N$ be a positive integer, and denote by $\widetilde{V}_{2k-2}^{\Gamma_{0}(N)}$ the space of maps $P:\Gamma_{0}(N)\backslash\mathrm{SL}_{2}(\mathbb{Z})\rightarrow V_{2k-2}$. 
On this space, we define an $\mathrm{SL}_{2}(\mathbb{Z})$-action as follows: for $P\in\widetilde{V}_{2k-2}^{\Gamma_{0}(N)}$ and $g\in\mathrm{SL}_{2}(\mathbb{Z})$,
$$(P|g)(A):=P(Ag^{-1})|_{2-2k}g.$$
Let $S=\left(\begin{smallmatrix}
    0 & -1 \\ 1 & 0
\end{smallmatrix}\right)$, $T=\left(\begin{smallmatrix}
    1 & 1 \\ 0 & 1
\end{smallmatrix}\right)$, and $U=TS$. 
Then the set
$$W_{2k-2}^{\Gamma_{0}(N)}:=\{P\in \widetilde{V}_{2k-2}^{\Gamma_{0}(N)}:P|(-I)=P,~P+P|S=P+P|U+P|U^{2}=0\}$$
is a subspace of $\widetilde{V}_{2k-2}^{\Gamma_{0}(N)}$, and the elements of this subspace are called \textit{vector-valued period polynomials}.

Now denote by $S_{2k}(N)$ the space of cusp forms of weight $2k$ on $\Gamma_{0}(N)$. 
Given $f\in S_{2k}(N)$ and $A\in\mathrm{SL}_{2}(\mathbb{Z})$, define
\begin{equation}\label{eq-vvpp}
    r_{f}(A)(X):=\int_{0}^{i\infty}(f|_{2k}A)(z)(X-z)^{2k-2}dz.
\end{equation}
Then
\begin{equation}\label{eq-vvppexp}
    r_{f}(A)(X)=\sum_{n=0}^{2k-2}i^{-n+1}\binom{2k-2}{n}r_{n,f}(A)X^{2k-2-n},
\end{equation}
where
$$r_{n,f}(A):=\int_{0}^{\infty}(f|_{2k}A)(it)t^{n}dt.$$
In particular, $r_{f}(A)\in V_{2k-2}$. 
Since $f|_{2k}\gamma=f$ for all $\gamma\in\Gamma_{0}(N)$, $r_{f}(A)$ depends only on the right coset of $A$ in $\Gamma_{0}(N)\backslash\mathrm{SL}_{2}(\mathbb{Z})$. Hence, the map
\begin{align*}
    r_{f}:\Gamma_{0}(N)\backslash\mathrm{SL}_{2}(\mathbb{Z})&\rightarrow V_{2k-2}\\
    A&\mapsto r_{f}(A)
\end{align*}
is a well-defined map belonging to the space $\widetilde{V}_{2k-2}^{\Gamma_{0}(N)}$. 
In fact, it is known that $r_{f}\in W_{2k-2}^{\Gamma_{0}(N)}$ (for more details, see \cite[p. 716]{PP13}). This map can be identified with the vector-valued period polynomial
$$\mathbb{C}\rightarrow\mathbb{C}^{e_{N}},\quad X\mapsto (r_{f}(A)(X))_{A\in\Gamma_{0}(N)\backslash\mathrm{SL}_{2}(\mathbb{Z})}.$$
Here, $e_{N}=[\mathrm{SL}_{2}(\mathbb{Z}):\Gamma_{0}(N)]$. Abusing notation, we also denote this polynomial by $r_{f}$ and call it the \textit{vector-valued period polynomial of $f$}. We write $r_{f}^{+}(A)(X)$ (resp. $r_{f}^{-}(A)(X)$) for the \textit{even} (resp. \textit{odd}) \textit{part} of $r_{f}(A)(X)$ i.e.
$$r_{f}^{\pm}(A)(X):=\frac{1}{2}(r_{f}(A)(X)\pm r_{f}(A)(-X)).$$

Let $D$ be a positive integer such that $D\equiv 0$ or $1\pmod{4}$, and set
\begin{align*}
    \mathcal{Q}_{D}&:=\{[a,b,c]=ax^{2}+bxy+cy^{2}:a,b,c\in\mathbb{Z},~b^{2}-4ac=D\},\\
    \mathcal{Q}_{D}^{0}&:=\{[a,b,c]\in\mathcal{Q}_{D}:\mathrm{gcd}(a,b,c)=1\}.
\end{align*}
Then $\mathrm{SL}_{2}(\mathbb{Z})$ acts on $\mathcal{Q}_{D}$ by
$$\left(Q\circ\left(\begin{smallmatrix}
    \alpha & \beta \\ \gamma & \delta
\end{smallmatrix}\right)\right)(x,y)=Q(\alpha x+\beta y,\gamma x+\delta y),$$
and $\mathcal{Q}_{D}^{0}$ is a $\mathrm{SL}_{2}(\mathbb{Z})$-invariant subset of $\mathcal{Q}_{D}$.

For a positive integer $D$ which is congruent to a square modulo $4N$ and an integer $\rho$ with $\rho^{2}\equiv D\pmod{4N}$, we set
$$\mathcal{Q}_{N,D,\rho}:=\{[Na,b,c]=Nax^{2}+bxy+cy^{2}:a,b,c\in\mathbb{Z},~b^{2}-4Nac=D,~b\equiv\rho\pmod{2N}\}.$$
Then the action of $\mathrm{SL}_{2}(\mathbb{Z})$ on $\mathcal{Q}_{D}$ restricts to an action of $\Gamma_{0}(N)$ on $\mathcal{Q}_{N,D,\rho}$.
If we set
$$\mathcal{Q}_{N,D,\rho}^{0}:=\{[Na,b,c]\in\mathcal{Q}_{N,D,\rho}:\mathrm{gcd}(Na,b,c)=1\},$$
then it is a $\Gamma_{0}(N)$-invariant subset of $\mathcal{Q}_{N,D,\rho}$ and
$$\mathcal{Q}_{N,D,\rho}=\bigcup_{\ell^{2}\mid D}\bigcup\limits_{\substack{\lambda\pmod{2N} \\ \lambda^{2}\equiv D/\ell^{2}\pmod{4N} \\ \ell\lambda\equiv\rho\pmod{2N}}}\ell\cdot\mathcal{Q}_{N,D/\ell^{2},\lambda}^{0}.$$
Thus, we can reduce to the study of forms $Q\in\mathcal{Q}_{N,D,\rho}^{0}$, which we call $\Gamma_{0}(N)$-\textit{primitive}.
Set
\begin{equation}\label{eq-gcd}
    m:=\gcd\left(N,\rho,\frac{\rho^{2}-D}{4N}\right).
\end{equation}
Then $m$ depends only on $\rho\pmod{2N}$ since replacing $\rho$ by $\rho+2N$ replaces $\frac{\rho^{2}-D}{4N}$ by $\frac{\rho^{2}-D}{4N}+\rho+N$. 
For $[Na,b,c]\in\mathcal{Q}_{N,D,\rho}^{0}$, we have $\gcd(N,b,ac)=m$ and $\gcd(a,b,c)=1$, so the two numbers
\begin{equation}\label{eq-mdecomp}
    \gcd(N,b,c)=m_{1}\quad\text{and}\quad\gcd(N,a,b)=m_{2}
\end{equation}
are coprime and have product $m$. 
Conversely, we have

\begin{proposition}\label{prop-gkz}
    Define $m$ by \eqref{eq-gcd} and fix a decomposition $m=m_{1}m_{2}$ with $m_{1},m_{2}>0$ and $\gcd(m_{1},m_{2})=1$. Then there is a one-to-one correspondence between the $\Gamma_{0}(N)$-equivalence classes of forms $Q\in\mathcal{Q}_{N,D,\rho}^{0}$ satisfying \eqref{eq-mdecomp} and the $\mathrm{SL}_{2}(\mathbb{Z})$-equivalence classes of forms in $\mathcal{Q}_{D}^{0}$ given by
    $$Q=[Na,b,c]\mapsto\widetilde{Q}=[N_{1}a,b,N_{2}c];$$
    here $N_{1}\cdot N_{2}$ is any decomposition of $N$ into coprime positive factors satisfying $\gcd(m_{1},N_{2})=\gcd(m_{2},N_{1})=1$. In particular, $|\mathcal{Q}_{N,D,\rho}^{0}/\Gamma_{0}(N)|=2^{\nu}\cdot |\mathcal{Q}_{D}^{0}/\mathrm{SL}_{2}(\mathbb{Z})|$, where $\nu$ is the number of prime factors of $m$.
\end{proposition}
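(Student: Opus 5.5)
The plan is to follow the strategy of Gross--Kohnen--Zagier: realize the correspondence through an Atkin--Lehner-type matrix, and check well-definedness, surjectivity and injectivity prime by prime at the primes dividing $N$.

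\emph{Step 1: $\widetilde{Q}$ lies in $\mathcal{Q}_{D}^{0}$.} Let $Q=[Na,b,c]\in\mathcal{Q}_{N,D,\rho}^{0}$ satisfy \eqref{eq-mdecomp}, and put $\widetilde{Q}=[N_{1}a,b,N_{2}c]$. Its discriminant is $b^{2}-4N_{1}N_{2}ac=D$. For primitivity, suppose a prime $p$ divides $N_{1}a$, $b$ and $N_{2}c$. Since $\gcd(a,b,c)=1$, $p$ must divide $N_{1}c$ or $N_{2}a$; say $p\mid N_{1}$ and $p\mid c$. Then $p\mid\gcd(N,b,c)=m_{1}$, and this contradicts $\gcd(m_{1},N_{2})=1$ only after one more observation. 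Namely, $\gcd(N_{1},N_{2})=1$ forces the whole $p$-part of $N$ into $N_{1}$, and the roles of $m_{1}$ and $m_{2}$ must then be compared. The symmetric case $p\mid N_{2}$, $p\mid a$ is analogous. So this step is a local bookkeeping check at each $p\mid N$.

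\emph{Step 2: the map descends to classes.} One checks the identity
\[
\widetilde{Q}(N_{2}x,y)=N_{2}\,Q(x,y).
\]
Choose an Atkin--Lehner matrix $W=\left(\begin{smallmatrix} N_{2}\alpha & \beta\\ N\gamma & N_{2}\delta\end{smallmatrix}\right)$ of determinant $N_{2}$. Then $\widetilde{Q}$ is, up to the factor $N_{2}$ and an $\mathrm{SL}_{2}(\mathbb{Z})$-change of variables, $Q\circ W^{-1}$-type. Since $W$ normalizes $\Gamma_{0}(N)$, $\Gamma_{0}(N)$-equivalent forms $Q$ go to $\mathrm{SL}_{2}(\mathbb{Z})$-equivalent forms $\widetilde{Q}$.

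The same identity, applied with two admissible decompositions $N=N_{1}N_{2}=N_{1}'N_{2}'$, shows that the two images differ by an element of $\mathrm{SL}_{2}(\mathbb{Z})$. This works because the admissible decompositions differ only at primes $p\mid N$ with $p\nmid m$, and there the relevant local Atkin--Lehner factor lies in $\mathrm{SL}_{2}(\mathbb{Z}_{p})$. Hence the map is independent of the choice of $N_{1}$ and $N_{2}$.

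\emph{Step 3: surjectivity.} Given $[A,B,C]\in\mathcal{Q}_{D}^{0}$, I would move it within its $\mathrm{SL}_{2}(\mathbb{Z})$-class to a form satisfying $N_{1}\mid A$, $N_{2}\mid C$ and $B\equiv\rho\pmod{2N}$. The tools are that a primitive form represents integers coprime to any given modulus, and the Chinese remainder theorem over the primes $p\mid N$.

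Locally at each $p\mid N$ one works as follows. If $p\mid N_{1}$, apply a translation $\left(\begin{smallmatrix}1&t\\0&1\end{smallmatrix}\right)$ to achieve $A\equiv 0$ and $B\equiv\rho$ modulo the $p$-part. If $p\mid N_{2}$, do the same with transposed matrices. The condition $\rho^{2}\equiv D\pmod{4N}$ is exactly what makes the needed root of $Q$ modulo $p^{v}$ exist. The sign of $B$ modulo $p$ is pinned to $\rho$ rather than $-\rho$ by whether $p\mid m_{1}$ or $p\mid m_{2}$. Dividing out then gives a preimage $Q=[Na,b,c]$ satisfying \eqref{eq-mdecomp}.

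\emph{Step 4: injectivity, the main obstacle.} Suppose $\widetilde{Q}_{2}=\widetilde{Q}_{1}\circ g$ with $g\in\mathrm{SL}_{2}(\mathbb{Z})$. I need to replace $g$ by $g'=\epsilon g$, where $\epsilon$ lies in the stabilizer of $\widetilde{Q}_{1}$ (multiplying on the left keeps $\widetilde{Q}_{1}\circ\epsilon g=\widetilde{Q}_{2}$), such that the conjugate of $g'$ by $W$ lies in $\Gamma_{0}(N)$. Equivalently, the forms $\widetilde{Q}\circ\gamma$ in one $\mathrm{SL}_{2}(\mathbb{Z})$-orbit satisfying the local conditions of Step 3 should form a single $\Gamma_{0}(N)$-orbit.

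I would prove this by reducing modulo $p^{v_{p}(N)}$ for each $p\mid N$. The point $(x:y)\in\mathbb{P}^{1}(\mathbb{Z}/p^{v})$ determined by the coset $\Gamma_{0}(N)\gamma$ must be a root of $\widetilde{Q}$ with the prescribed value of $B$. For a primitive form, the prescribed residue of $B$ determines that root uniquely, so the coset is determined. The delicate case is $p\mid m$, where $\widetilde{Q}$ has a double root modulo $p$; there I expect to need the primitivity and the $m_{1}/m_{2}$ assignment carefully.

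\emph{Counting.} The final count follows by summing over the $2^{\nu}$ coprime factorizations $m=m_{1}m_{2}$. Each factorization contributes exactly $|\mathcal{Q}_{D}^{0}/\mathrm{SL}_{2}(\mathbb{Z})|$ classes by the bijection, and every class in $\mathcal{Q}_{N,D,\rho}^{0}/\Gamma_{0}(N)$ has a unique pair $(m_{1},m_{2})$ as noted before \eqref{eq-mdecomp}.
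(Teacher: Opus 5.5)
Your Step~1 cannot be completed, because with the conventions exactly as stated the claim $\widetilde{Q}\in\mathcal{Q}_D^0$ is false whenever $m>1$.

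\begin{itemize}
\item The condition $\gcd(m_1,N_2)=1$ forces every prime $p\mid m_1$ into $N_1$. By \eqref{eq-mdecomp} such a $p$ divides $b$ and $c$, so it divides all three of $N_1a$, $b$, $N_2c$.
\item The same happens for $p\mid m_2$: such a $p$ is forced into $N_2$ and divides $a$ and $b$.
\item A concrete case: take $N=3$, $D=45$, $\rho=3$, so $m=3$. The form $Q=[9,3,-1]$ has $m_1=1$ and $m_2=3$. The stated rule forces $(N_1,N_2)=(1,3)$, and then $\widetilde{Q}=[3,3,-3]$ is not primitive.
\end{itemize}

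So the ``one more observation'' you allude to does not exist. The pairing must be $\gcd(m_1,N_1)=\gcd(m_2,N_2)=1$, i.e. primes dividing $c$ go into the factor multiplying $c$. Also, $\Gamma_0(N)$-primitivity must mean $\gcd(a,b,c)=1$. With the literal $\gcd(Na,b,c)=1$ one always has $m_1\mid\gcd(Na,b,c)=1$, and then your counting paragraph, and the factor $2^{\nu}$, collapses. With these corrections your prime-by-prime check does work. But you should have flagged the inconsistency rather than asserting that bookkeeping resolves it.

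Step~2 contains a second false claim: the correspondence is \emph{not} independent of the admissible decomposition.

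\begin{itemize}
\item Take $N=11$, $D=12$, $\rho=10$. Then $m=1$, so both $(11,1)$ and $(1,11)$ are admissible. Take $Q=[11,10,2]$.
\item The first image is $[11,10,2]$, which represents $-1$ at $(1,-2)$.
\item The second image is $[1,10,22]=(x+5y)^2-3y^2$, which does not represent $-1$ (reduce mod $3$). So the two images are inequivalent.
\end{itemize}

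In general, moving $p^{e}\,\|\,N$ from $N_1$ to $N_2$ replaces $\mathfrak{a}_{\widetilde Q}$ by $\mathfrak{a}_{\widetilde Q}\,(p^e,\omega_b)^{-1}$ when $p\nmid b$, by Lemma~\ref{lem-idealtranslation}. This is exactly the class shift the paper itself uses in \eqref{eq-CtoCdbijection}. The proposition only asserts that each admissible choice gives a bijection, so drop the independence claim. A local $\mathrm{SL}_2(\mathbb{Z}_p)$ argument could not give global $\mathrm{SL}_2(\mathbb{Z})$-equivalence in any case.

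For descent to classes you do not need Atkin--Lehner normalizers. Your identity $\widetilde{Q}(N_2x,y)=N_2Q(x,y)$ directly gives
$\widetilde{Q\circ\gamma}=\widetilde{Q}\circ\left(\begin{smallmatrix}\alpha&N_2\beta\\ N_1\gamma'&\delta\end{smallmatrix}\right)$ for $\gamma=\left(\begin{smallmatrix}\alpha&\beta\\ N\gamma'&\delta\end{smallmatrix}\right)\in\Gamma_0(N)$. So $\Gamma_0(N)$ is carried onto $\Gamma_0^0(N_1,N_2)$.

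Steps~3--4 remain sketches with real holes:
\begin{itemize}
\item In Step~3, $\left(\begin{smallmatrix}1&t\\0&1\end{smallmatrix}\right)$ does not change $A$. You need $\left(\begin{smallmatrix}1&0\\t&1\end{smallmatrix}\right)$, applied after making $C$ prime to $p$.
\item Step~4 leaves open exactly the case $p\mid m$. After the correction, the local condition there is not merely a root modulo $p^{v_p(N)}$. It is $p^{v_p(N)+1}\mid C$ if $p\mid m_1$, or $p^{v_p(N)+1}\mid A$ if $p\mid m_2$. You must show this condition cuts out a single $\Gamma_0^0(N_1,N_2)$-coset.
\end{itemize}

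The paper proves none of this itself; it simply cites \cite[pp.~505--506]{GKZ87}.
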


\begin{proof}
    See \cite[pp. 505--506]{GKZ87}.
\end{proof}

Denote by $\mathcal{Q}_{N,D}$ (resp. $\mathcal{Q}_{N,D}^{0}$) the set of all (resp. all $\Gamma_{0}(N)$-primitive) quadratic forms $[Na,b,c]$ of discriminant $D$.
The Fricke involution $W_{N}=\left(\begin{smallmatrix}
    0 & -1/\sqrt{N} \\ \sqrt{N} & 0
\end{smallmatrix}\right)$ acts on $\mathcal{Q}_{N,D}$ as follows: For $Q=[Na,b,c]\in\mathcal{Q}_{N,D}$,
\begin{equation}\label{eq-Fricke}
    Q\circ W_{N}:=[Nc,-b,a].
\end{equation}
For an integer $\rho$ with $\rho^{2}\equiv D\pmod{4N}$, \eqref{eq-Fricke} induces a one-to-one correspondence between $\mathcal{Q}_{N,D,\rho}$ and $\mathcal{Q}_{N,D,-\rho}$.

\section{Proof of Theorem \ref{thm-period}}\label{sec-period}

In this section, we prove Theorem \ref{thm-period}. Throughout this section, $N,D$ and $\rho$ are as in Section \ref{sec-prel}. To prove Theorem \ref{thm-period}, we need the following lemma.

\begin{lemma}\label{lem-limit}
    Let $\mathcal{C}$ be a $\Gamma_{0}(N)$-equivalence class in $\mathcal{Q}_{N,D,\rho}$, and fix $Q_{\mathcal{C}}\in\mathcal{C}$.
    For $\mathrm{Re}(s)>1$, define
    \begin{equation}\label{eq-classzeta}
        \zeta_{\mathcal{C}}^{(N)}(s):=\sum\limits_{\substack{(b,d)\in\mathbb{Z}^{2}/\Gamma_{0}(N)_{Q_{\mathcal{C}}} \\ Q_{\mathcal{C}}(b,d)>0 \\ \gcd(d,N)=1}}Q_{\mathcal{C}}(b,d)^{-s}.
    \end{equation}
    Then for every integer $n$ with $0\leq n\leq 2k-2$ we have
    \begin{equation}\label{eq-limitvalue}
        \lim_{\varepsilon\rightarrow 0^{+}}\sum_{\substack{[Na,b,c]\in \mathcal{C} \\ c>0}}\int_{-\varepsilon}^{\varepsilon}\frac{t^{n}}{(-Nat^{2}+bit+c)^{k}}dt=\delta_{n,0}\frac{2\pi}{N(2k-1)}\cdot\frac{\zeta_{\mathcal{C}}^{(N)}(k)}{\zeta(2k)\prod_{p\mid N}(1-p^{-2k})}.
\end{equation}
\end{lemma}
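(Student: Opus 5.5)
The plan is to parametrize the forms in $\mathcal{C}$ with $c>0$ by the second columns of matrices in $\Gamma_0(N)$, carry out the resulting inner sum over $b$ by Poisson summation, and let $\varepsilon\to0^+$ term by term. Throughout write $Q=Q_{\mathcal{C}}$.

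\emph{Step 1 (parametrization).} Every $[Na,b,c]\in\mathcal{C}$ has the form $Q\circ\gamma$ for some $\gamma=\left(\begin{smallmatrix}\alpha&\beta\\ \gamma'&\delta\end{smallmatrix}\right)\in\Gamma_0(N)$, unique up to left multiplication by $\Gamma_0(N)_{Q}$. Then $c=Q(\beta,\delta)$, and $(\beta,\delta)$ is a primitive vector with $\gcd(\delta,N)=1$. Conversely, every such primitive vector is the second column of some element of $\Gamma_0(N)$. Two matrices with the same second column differ by right multiplication by $\left(\begin{smallmatrix}1&0\\ Nj&1\end{smallmatrix}\right)$, $j\in\mathbb{Z}$. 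This fixes $c$ and replaces $b$ by $b+2Ncj$, with $a$ determined by $b^2-4Nac=D$. So the left-hand side of \eqref{eq-limitvalue} equals
\[
\sum_{\substack{(\beta,\delta)\ \mathrm{primitive}/\Gamma_0(N)_Q\\ \gcd(\delta,N)=1,\ c=Q(\beta,\delta)>0}}\ \sum_{j\in\mathbb{Z}}\int_{-\varepsilon}^{\varepsilon}\frac{t^n\,dt}{(-Nat^2+bit+c)^k},\qquad b=b_0+2Ncj .
\]

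\emph{Step 2 (normal form of the integrand).} Using $b^2-4Nac=D$ we have
\[
-Nat^2+bit+c=c\Big[\Big(1+\tfrac{ibt}{2c}\Big)^2+\tfrac{Dt^2}{4c^2}\Big],
\]
so each integrand is $c^{-k}t^n\,\phi_{t,c}(b)$ with $\phi_{t,c}(b)$ a smooth, rapidly decaying function of $b\in\mathbb{R}$. Apply Poisson summation to $\sum_j\phi_{t,c}(b_0+2Ncj)$. In the zeroth term, substitute $x=bt/(2c)$ and $y=\sqrt D\,t/(2c)$. This gives
\[
\frac{1}{2Nc}\int_{\mathbb{R}}\phi_{t,c}(b)\,db=\frac{c^{-k}}{N|t|}\int_{\mathbb{R}}\big((1+ix)^2+y^2\big)^{-k}\,dx .
\]
The poles in $x$ sit at $x=i\pm y$, both in the upper half-plane, so this vanishes for $t\neq0$. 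Hence the contribution is concentrated at $t=0$, as a distribution in $t$. I would not evaluate the Poisson side naively. Instead I would compute, for fixed $(\beta,\delta)$, the limit
\[
\lim_{\varepsilon\to0^+}\sum_j\int_{-\varepsilon}^{\varepsilon}\frac{t^n\,dt}{(-Nat^2+bit+c)^k}.
\]
Rescale $t=\varepsilon u$ and interchange the $j$-sum with the integral. The sum over $j$ becomes a Riemann sum in $b$ with mesh $2Nc\,\varepsilon$. In the limit this leaves $\varepsilon^{n}$ times a finite quantity, so only $n=0$ can survive, and it yields $c^{-k}\cdot\kappa/N$ for an absolute constant $\kappa$. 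To evaluate $\kappa$, I would integrate first in $t$ over all of $\mathbb{R}$ and then in $b$, comparing with the model computation $\int_{\mathbb{R}}\int_{\mathbb{R}}\big((1+ixt)^2+t^2\big)^{-k}\,dt\,dx$ (or the corresponding one-variable Beta integral). I expect the outcome to be $\kappa=2\pi/(2k-1)$.

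\emph{Step 3 (assembly).} Summing $c^{-k}=Q(\beta,\delta)^{-k}$ over primitive $(\beta,\delta)$ with $\gcd(\delta,N)=1$, taken modulo $\Gamma_0(N)_Q$ and with $Q(\beta,\delta)>0$, gives the primitive part of $\zeta_{\mathcal{C}}^{(N)}(k)$. Every pair counted in $\zeta_{\mathcal{C}}^{(N)}$ is $g(\beta,\delta)$ with $g\ge1$, $\gcd(g,N)=1$ and $(\beta,\delta)$ primitive, and $Q(g\beta,g\delta)=g^2Q(\beta,\delta)$. So the full sum is the primitive sum times $\sum_{\gcd(g,N)=1}g^{-2k}=\zeta(2k)\prod_{p\mid N}(1-p^{-2k})$. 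This produces exactly the denominator in \eqref{eq-limitvalue}.

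\emph{Main obstacle.} The hard part is justifying the interchange of $\lim_{\varepsilon\to0^+}$ with the infinite sum over classes of $(\beta,\delta)$. This needs a bound on each $j$-summed inner integral that is uniform in $\varepsilon$, of shape $O(c^{-k}\cdot c^{\text{small}})$, so that dominated convergence applies given $k\ge2$. Pinning down the constant $\kappa$ together with the vanishing for $n\ge1$ is the other delicate point. I would first attempt this with the explicit Poisson formula, where the nonzero Fourier modes decay like $e^{-\pi|m|/(N\varepsilon)}$-type terms after rescaling. If that fails, I would follow the Kohnen--Zagier treatment of the analogous level-one limit.
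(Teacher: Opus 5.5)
\textbf{Review of the proposal.} Your Steps 1--3 follow the paper's proof. The fibres of $\gamma\mapsto(\beta,\delta)$ are exactly the orbits of $H=\{\gamma_r\}$ used there. The completed square is the same, as is the rescaling $t=\varepsilon u$ with a Riemann sum giving $\varepsilon^{n}\cdot O(1)$. The extraction of $\zeta(2k)\prod_{p\mid N}(1-p^{-2k})$ via $\gcd(\beta,\delta)$ also matches.

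\emph{The interchange step.} As written, ``rescale $t=\varepsilon u$ and interchange the $j$-sum with the integral'' would fail. For $n=0$ and large $|j|$ one has $\int_{-\varepsilon}^{\varepsilon}|\cdot|\,dt\asymp 1/|j|$, so Fubini is unavailable exactly in the case that matters. In fact your own Poisson observation shows the interchanged expression has the wrong limit. For fixed $t\neq0$ the points $x_j=b_jt/(2c)$ have mesh $N|t|$, the zeroth mode vanishes, and the remaining modes are $O(|t|^{-1}e^{-\pi|m|/(N|t|)})$. Hence $\int_{-\varepsilon}^{\varepsilon}\sum_j\to0$ even for $n=0$. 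The whole contribution lives in the conditionally convergent order $\sum_j\int$.

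What you must do, as the paper does, is keep $\varepsilon\sum_j$ outside and Riemann-sum
\[
\Psi_{\varepsilon}(X)=\int_{-1}^{1}u^{n}\Bigl((1+iXu)^{2}+\tfrac{D\varepsilon^{2}u^{2}}{4c^{2}}\Bigr)^{-k}du
\]
at $X=\varepsilon(jN+b_0/(2c))$. Since $\int_{\mathbb{R}}v^{n}((1+iv)^{2}+\mu^{2}v^{2})^{-k}dv=0$ for small $\mu$ and $n\le 2k-2$, you get $|\Psi_\varepsilon(X)|\ll\min(1,|X|^{-2k})$ uniformly in small $\varepsilon$ and $c\ge1$. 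This gives the limit $\frac1N\int_{\mathbb{R}}\int_{-1}^{1}u^{n}(1+iXu)^{-2k}du\,dX$. It also gives the uniform $O(c^{-k})$ bound you want for dominated convergence over the classes, which the paper leaves implicit. Your phrase ``Riemann sum in $b$ with mesh $2Nc\varepsilon$'' fits this order, so this may be a slip, but the order is the entire content and must be stated.

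\emph{The constant $\kappa$.} Integrating $t$ over all of $\mathbb{R}$ gives the right number. The inner integral vanishes for $|x|>1$ and equals $(1-x^{2})^{k-1}B(\tfrac12,k-\tfrac12)$ for $|x|<1$, so the model is $B(\tfrac12,k-\tfrac12)B(\tfrac12,k)=2\pi/(2k-1)$. However, replacing the range $[-\varepsilon,\varepsilon]$ by $\mathbb{R}$ in a conditionally convergent double integral needs justification. One argument: put $g(x,t)=((1+ixt)^{2}+t^{2})^{-k}$ and $I(\lambda)=\int_{\mathbb{R}}\int_{|t|\le\lambda}g(x,t)\,dt\,dx$. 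Then
\[
I'(\lambda)=\int_{\mathbb{R}}\bigl(g(x,\lambda)+g(x,-\lambda)\bigr)\,dx=0,
\]
because both poles $x=\pm1+i/\lambda$ of $g(\cdot,\lambda)$ lie in one half-plane. As $\lambda\to0$, the substitution $t=\lambda u$, $x=X/\lambda$ turns $I(\lambda)$ into the $n=0$ integral above. As $\lambda\to\infty$, $I(\lambda)$ becomes your model. The paper instead cites Kohnen--Zagier for $\int_{\mathbb{R}}\int_{-1}^{1}(1+iXu)^{-2k}du\,dX=2\pi/(2k-1)$. With these two repairs your argument is complete.
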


\begin{proof}
Fix $n\in\{0,1,\ldots,2k-2\}$ and $\varepsilon>0$. Put
$$L_{\varepsilon}:=\sum\limits_{\substack{[Na,b,c]\in\mathcal{C} \\ c>0}}\int_{-\varepsilon}^{\varepsilon}\frac{t^{n}}{(-Nat^{2}+bit+c)^{k}}dt.$$
Setting $t=\varepsilon u$, we get
\begin{equation}\label{eq-substitution}
    L_{\varepsilon}=\varepsilon^{n+1}\sum\limits_{\substack{[Na,b,c]\in\mathcal{C} \\ c>0}}\int_{-1}^{1}\frac{u^{n}}{(-Na\varepsilon^{2}u^{2}+bi\varepsilon u+c)^{k}}du.
\end{equation}

Let
$$H:=\left\{\gamma_{r}:=\left(\begin{smallmatrix}
    1 & 0 \\ Nr & 1
\end{smallmatrix}\right):r\in\mathbb{Z}\right\}.$$
Then $H$ is a subgroup of $\Gamma_{0}(N)$ and acts on $\mathcal{C}$ by $Q\mapsto Q\circ\gamma_{r}$.
A direct computation shows that, for $Q=[Na,b,c]\in\mathcal{C}$,
\begin{equation*}
Q\circ\gamma_{r}=[N(a+rb+r^2Nc),b+2rNc,c].
\end{equation*}
In particular, $c$ is unchanged, and as $r$ varies, $b$ runs through the congruence classes modulo $2Nc$.
Fix, for each $c>0$, a complete set of representatives for the residue classes $b\pmod{2Nc}$.
Then, for each fixed $c>0$, the map
\begin{align*}
    \{[Na,b,c]\in\mathcal{C}:b\pmod{2Nc}\}\times\mathbb{Z}&\rightarrow\{[Na,b,c]\in\mathcal{C}:c>0\}\\
    (Q_{0},r)&\mapsto Q_{0}\circ\gamma_{r}
\end{align*}
is a bijection. 
Hence, \eqref{eq-substitution} becomes
\begin{equation}\label{eq-doublesum}
L_{\varepsilon}=\varepsilon^{n+1}\sum_{\substack{[Na,b,c]\in\mathcal{C} \\ c>0 \\ b\pmod{2Nc}}}\sum_{r\in\mathbb{Z}}\int_{-1}^{1}
\frac{u^{n}}{(-Na_{r}\varepsilon^{2}u^{2}+i(b+2rNc)\varepsilon u+c)^k}du,
\end{equation}
where $a_{r}:=a+rb+r^{2}Nc$.

Using $b^{2}-4Nac=D$, one can check that
\begin{equation*}
-Na_{r}t^{2}+i(b+2rNc)t+c=c\left(\frac{D}{4c^{2}}t^{2}+\left(1+i\left(rN+\frac{b}{2c}\right)t\right)^{2}\right).
\end{equation*}
Applying this with $t=\varepsilon u$ in \eqref{eq-doublesum} yields
\begin{align}
L_{\varepsilon}&=\varepsilon^{n+1}\sum_{\substack{[Na,b,c]\in\mathcal{C} \\ c>0 \\ b\pmod{2Nc}}}c^{-k}\int_{-1}^{1}u^{n}\sum_{r\in\mathbb Z}\frac{du}{\left(\frac{D\varepsilon^{2}u^{2}}{4c^{2}}+\left(1+i\varepsilon u(rN+\frac{b}{2c})\right)^{2}\right)^{k}}\nonumber\\
&=\varepsilon^{n}\sum_{\substack{[Na,b,c]\in\mathcal{C} \\ c>0 \\ b\pmod{2Nc}}}c^{-k}\left[\varepsilon\sum_{r\in\mathbb{Z}}\int_{-1}^{1}\frac{u^{n}du}{\left(\frac{D\varepsilon^{2}u^{2}}{4c^{2}}+\left(1+i\varepsilon u(rN+\frac{b}{2c})\right)^{2}\right)^{k}}\right].\nonumber
\end{align}
The inner sum is the Riemann sum for the integral
$$\frac{1}{N}\int_{-\infty}^{\infty}\int_{-1}^{1}\frac{u^{n}}{(1+ixu)^{2k}}dudx.$$
Therefore, $\lim_{\varepsilon\rightarrow 0^{+}}L_{\varepsilon}=0$ unless $n=0$. When $n=0$, it equals
$$\frac{2\pi}{N(2k-1)}\sum_{\substack{[Na,b,c]\in\mathcal{C} \\ c>0 \\ b\pmod{2Nc}}}c^{-k}$$
(see \cite[p. 226]{KZ84}). 
It remains to show that
$$\sum_{\substack{[Na,b,c]\in\mathcal{C} \\ c>0 \\ b\pmod{2Nc}}}c^{-k}=\frac{\zeta_{\mathcal{C}}^{(N)}(k)}{\zeta(2k)\prod_{p\mid N}(1-p^{-2k})}.$$
Writing any $Q\in\mathcal{C}$ as $Q_{\mathcal{C}}\circ A$
with $A\in\Gamma_{0}(N)$, we have
$$(Q_{\mathcal{C}}\circ A)(0,1)=Q_{\mathcal{C}}(b,d)\quad\text{for}~A=\left(\begin{smallmatrix}
    * & b \\ * & d
\end{smallmatrix}\right),$$
and therefore
\begin{align*}
    \zeta_{\mathcal{C}}^{(N)}(k)&=\sum\limits_{\substack{(b,d)\in\mathbb{Z}^{2}/\Gamma_{0}(N)_{Q_{\mathcal{C}}} \\ Q_{\mathcal{C}}(b,d)>0 \\ \gcd(d,N)=1}}Q_{\mathcal{C}}(b,d)^{-k}=\sum\limits_{\substack{\ell\geq 1 \\ \gcd(\ell,N)=1}}\ell^{-2k}\sum\limits_{\substack{(b,d)\in\mathbb{Z}^{2}/\Gamma_{0}(N)_{Q_{\mathcal{C}}} \\ Q_{\mathcal{C}}(b,d)>0 \\ \gcd(d,N)=1 \\ \gcd(b,d)=1}}Q_{\mathcal{C}}(b,d)^{-k}\\
    &=\zeta(2k)\prod_{p\mid N}(1-p^{-2k})\sum_{\substack{[Na,b,c]\in\mathcal{C} \\ c>0 \\ b\pmod{2Nc}}}c^{-k}.
\end{align*}
\end{proof}

We are now ready to prove Theorem \ref{thm-period}.

\begin{proof}[Proof of Theorem \ref{thm-period}]
Let $A=\left(\begin{smallmatrix}
    \alpha & \beta \\ \gamma & \delta
\end{smallmatrix}\right)\in\mathrm{SL}_{2}(\mathbb{Z})$.
Fix $n\in\{0,1,\ldots,2k-2\}$, and choose the sign $\pm$ so that $\pm 1=(-1)^n$. In other words, take $+$ if $n$ is even and $-$ if $n$ is odd.
Note that
\begin{align*}
    (f_{k,N,D,\rho}|_{2k}A)(z)&=\frac{1}{2}c_{k,D}^{-1}\sum_{Q\in\mathcal{Q}_{N,D,\rho}}(\gamma z+\delta)^{-2k}Q\left(\frac{\alpha z+\beta}{\gamma z+\delta},1\right)^{-k}\\
    &=\frac{1}{2}c_{k,D}^{-1}\sum_{Q\in\mathcal{Q}_{N,D,\rho}}(Q\circ A)(z,1)^{-k}\\
    &=\frac{1}{2}c_{k,D}^{-1}\sum_{Q\in\mathcal{Q}_{N,D,\rho}\circ A}Q(z,1)^{-k}.
\end{align*}
Similarly,
$$(f_{k,N,D,\rho}'|_{2k}A)(z)=\frac{1}{2}c_{k,D}^{-1}\sum_{Q\in\mathcal{Q}_{N,D,\rho}'\circ A}Q(z,1)^{-k}.$$
Thus,
\begin{equation}\label{eq-periodaslimit}
    \begin{split}
        2i^{-n^{2}}c_{k,D}&r_{n,f_{k,N,D,\rho}^{\pm}}(A)\\&=\int_{0}^{\infty}\left(\sum_{[a,b,c]\in\mathcal{Q}_{N,D,\rho}\circ A}\pm\sum_{[a,b,c]\in\mathcal{Q}_{N,D,\rho}'\circ A}\right)\frac{t^{n}}{(Na(it)^{2}+bit+c)^{k}}dt\\
        &=\lim_{\varepsilon\rightarrow 0^{+}}\int_{\varepsilon}^{1/\varepsilon}\left(\sum_{[a,b,c]\in\mathcal{Q}_{N,D,\rho}\circ A}\pm\sum_{[a,b,c]\in\mathcal{Q}_{N,D,\rho}'\circ A}\right)\frac{t^{n}}{(Na(it)^{2}+bit+c)^{k}}dt.
    \end{split}
\end{equation}
From now on, we concentrate on the case where $A=I$. In this case, \eqref{eq-periodaslimit} becomes
\begin{align*}
    2i^{-n^{2}}c_{k,D}r_{n,f_{k,N,D,\rho}^{\pm}}(I)&=\int_{0}^{\infty}\left(\sum_{[Na,b,c]\in\mathcal{Q}_{N,D,\rho}}\pm\sum_{[Na,b,c]\in\mathcal{Q}_{N,D,\rho}'}\right)\frac{t^{n}}{(Na(it)^{2}+bit+c)^{k}}dt\\
    &=\lim_{\varepsilon\rightarrow 0}S_{\varepsilon},
\end{align*}
where
$$S_{\varepsilon}:=\int_{\varepsilon}^{1/\varepsilon}\left(\sum_{[Na,b,c]\in\mathcal{Q}_{N,D,\rho}}\pm\sum_{[Na,b,c]\in\mathcal{Q}_{N,D,\rho}'}\right)\frac{t^{n}}{(Na(it)^{2}+bit+c)^{k}}dt.$$
Since
\begin{align*}
    \sum_{[Na,b,c]\in\mathcal{Q}_{N,D,\rho}'}\int_{\varepsilon}^{1/\varepsilon}\frac{t^{n}}{(-Nat^{2}+bit+c)^{k}}dt&=\sum_{[Na,b,c]\in\mathcal{Q}_{N,D,\rho}}\int_{\varepsilon}^{1/\varepsilon}\frac{t^{n}}{(-Nat^{2}-bit+c)^{k}}dt\\
    &=(-1)^{n}\sum_{[Na,b,c]\in\mathcal{Q}_{N,D,\rho}}\int_{-1/\varepsilon}^{-\varepsilon}\frac{t^{n}}{(-Nat^{2}+bit+c)^{k}}dt,
\end{align*}
we have
\begin{align*}
    S_{\varepsilon}&=\sum_{[Na,b,c]\in\mathcal{Q}_{N,D,\rho}}\left(\int_{\varepsilon}^{1/\varepsilon}+\int_{-1/\varepsilon}^{-\varepsilon}\right)\frac{t^{n}}{(-Nat^{2}+bit+c)^{k}}dt\\
    &=\sum_{[Na,b,c]\in\mathcal{Q}_{N,D,\rho}}\left(\int_{-\infty}^{\infty}-\int_{-\infty}^{-1/\varepsilon}-\int_{-\varepsilon}^{\varepsilon}-\int_{1/\varepsilon}^{\infty}\right)\frac{t^{n}}{(-Nat^{2}+bit+c)^{k}}dt.
\end{align*}
Since $\mathcal{Q}_{N,D,\rho}\circ W_{N}=\mathcal{Q}_{N,D,-\rho}$,
\begin{align*}
    \sum_{[Na,b,c]\in\mathcal{Q}_{N,D,\rho}}&\int_{-\infty}^{-1/\varepsilon}\frac{t^{n}}{(-Nat^{2}+bit+c)^{k}}dt\\
    &=\sum_{[Na,b,c]\in\mathcal{Q}_{N,D,\rho}}\int_{0}^{-\varepsilon/N}\frac{N^{-n}s^{-n}}{(-aN^{-1}s^{-2}+biN^{-1}s^{-1}+c)^{k}}\cdot\left(-\frac{1}{Ns^{2}}\right)ds\\
    &=\sum_{[Na,b,c]\in\mathcal{Q}_{N,D,\rho}}\int_{-\varepsilon/N}^{0}\frac{(-1)^{k}N^{k-1-n}s^{2k-2-n}}{(-Ncs^{2}-bis+a)^{k}}ds\\
    &=\sum_{[Na,b,c]\in\mathcal{Q}_{N,D,-\rho}}\int_{-\varepsilon/N}^{0}\frac{(-1)^{k}N^{k-1-n}t^{2k-2-n}}{(-Nat^{2}+bit+c)^{k}}dt.
\end{align*}
Similarly,
$$\sum_{[Na,b,c]\in\mathcal{Q}_{N,D,\rho}}\int_{1/\varepsilon}^{\infty}\frac{t^{n}}{(-Nat^{2}+bit+c)^{k}}dt=\sum_{[Na,b,c]\in\mathcal{Q}_{N,D,-\rho}}\int_{0}^{\varepsilon/N}\frac{(-1)^{k}N^{k-1-n}t^{2k-2-n}}{(-Nat^{2}+bit+c)^{k}}dt.$$
Hence,
$$S_{\varepsilon}=S_{1}+S_{\varepsilon}'+S_{\varepsilon}'',$$
where
\begin{align*}
    S_{1}&=\sum_{[Na,b,c]\in\mathcal{Q}_{N,D,\rho}}\int_{-\infty}^{\infty}\frac{t^{n}}{(-Nat^{2}+bit+c)^{k}}dt,\\
    S_{\varepsilon}'&=-\sum_{[Na,b,c]\in\mathcal{Q}_{N,D,\rho}}\int_{-\varepsilon}^{\varepsilon}\frac{t^{n}}{(-Nat^{2}+bit+c)^{k}}dt,\\
    S_{\varepsilon}''&=-\sum_{[Na,b,c]\in\mathcal{Q}_{N,D,-\rho}}\int_{-\varepsilon/N}^{\varepsilon/N}\frac{(-1)^{k}N^{k-1-n}t^{2k-2-n}}{(-Nat^{2}+bit+c)^{k}}dt.
\end{align*}

We claim that
$$S_{1}=\sum\limits_{\substack{[Na,b,c]\in\mathcal{Q}_{N,D,\rho} \\ ac<0}}\int_{-\infty}^{\infty}\frac{t^{n}}{(-Nat^{2}+bit+c)^{k}}dt.$$
To prove this, let $Q=[Na,b,c]\in\mathcal{Q}_{N,D,\rho}$. Then $ac\neq 0$ because $D$ is not a perfect square. Assume now that $ac>0$, and set
$$P_{Q}(t):=-Nat^{2}+bit+c.$$
Then the zeros of $P_{Q}$ are given by
$$t_{\pm}=\frac{bi\pm\sqrt{-D}}{2Na}=\frac{i(b\pm\sqrt{D})}{2Na}.$$
In particular, $t_{\pm}$ are purely imaginary.
Moreover,
$$\mathrm{Im}(t_{+})\mathrm{Im}(t_{-})
=\frac{(b+\sqrt{D})(b-\sqrt{D})}{(2Na)^{2}}
=\frac{b^{2}-D}{(2Na)^{2}}
=\frac{4Nac}{(2Na)^{2}}.$$
Since $ac>0$, the poles $t_{\pm}$ of the function
$$\frac{t^{n}}{P_{Q}(t)^{k}}$$
lie on the same side of the real axis.
Let $C_{R}$ be the semicircular arc centered at $0$ of radius $R>0$ lying on the opposite side, and let $\Gamma_R=[-R,R]\cup C_{R}$.
By Cauchy's theorem,
$$\int_{\Gamma_{R}}\frac{t^{n}}{P_{Q}(t)^{k}}dt=\int_{-R}^{R}\frac{t^{n}}{P_{Q}(t)^{k}}dt+\int_{C_{R}}\frac{t^{n}}{P_{Q}(t)^{k}}dt=0.$$
Since $n\leq 2k-2$, and $\frac{t^{n}}{P_{Q}(t)^{k}}=O(|t|^{n-2k})$ as $|t|\rightarrow \infty$,
$$\lim_{R\rightarrow\infty}\int_{C_{R}}\frac{t^{n}}{P_{Q}(t)^{k}}dt=0,$$
which implies that
$$\int_{-\infty}^{\infty}\frac{t^{n}}{(-Nat^{2}+bit+c)^{k}}dt=0.$$
Consequently,
$$S_{1}=\sum_{\substack{[Na,b,c]\in\mathcal{Q}_{N,D,\rho}\\ ac<0}}
\int_{-\infty}^{\infty}\frac{t^{n}}{(-Nat^{2}+bit+c)^{k}}dt$$
as claimed.

Arguing as in the proof of \cite[Theorem 5]{KZ84}, we conclude that the integral in $S_{1}$ is equal
$$2i^{-n}c_{k,D}\binom{2k-2}{n}^{-1}\mathrm{sgn}(a)d_{k,n}(c,-b,Na).$$
Here, $d_{k,n}(\alpha,\beta,\gamma)$ denotes the coefficient of $X^{n}$ in $(\alpha X^{2}+\beta X+\gamma)^{k-1}$.
Thus, the contribution of $S_{1}$ to $r_{n,f_{k,N,D,\rho}^{\pm}}(I)$ is
$$(-1)^{[n/2]}\binom{2k-2}{n}^{-1}\left(\sum\limits_{\substack{[Na,b,c]\in\mathcal{Q}_{N,D,\rho} \\ a>0>c}}d_{k,n}(c,-b,Na)-\sum\limits_{\substack{[Na,b,c]\in\mathcal{Q}_{N,D,\rho} \\ a<0<c}}d_{k,n}(c,-b,Na)\right).$$
Note that for each fixed $[Na,b,c]\in\mathcal{Q}_{N,D,\rho}$ we have
$$(NaX^{2}-bX+c)^{k-1}=X^{2k-2}(cX^{-2}-bX^{-1}+Na)^{k-1}=\sum_{n=0}^{2k-2}d_{k,n}(c,-b,Na)X^{2k-2-n}.$$
Hence, summing over $Q\in\mathcal{Q}_{N,D,\rho}$ in the two ranges $a>0>c$ and $a<0<c$ and comparing the coefficients of $X^{2k-2-n}$, we see that the contribution of $S_{1}$ to
$r_{f_{k,N,D,\rho}^{+}}^{+}(I)+r_{f_{k,N,D,\rho}^{-}}^{-}(I)$ is
$$\sum\limits_{\substack{[Na,b,c]\in\mathcal{Q}_{N,D,\rho} \\ a>0>c}}(NaX^{2}-bX+c)^{k-1}-\sum\limits_{\substack{[Na,b,c]\in\mathcal{Q}_{N,D,\rho} \\ a<0<c}}(NaX^{2}-bX+c)^{k-1}.$$

Next, we compute $\lim_{\varepsilon\rightarrow 0}S_{\varepsilon}'$. Recall that
$$S_{\varepsilon}'=-\sum_{[Na,b,c]\in\mathcal{Q}_{N,D,\rho}}\int_{-\varepsilon}^{\varepsilon}\frac{t^{n}}{(-Nat^{2}+bit+c)^{k}}dt.$$
Since $D$ is not a perfect square, the set $\mathcal{Q}_{N,D,\rho}$ contains no forms $[Na,b,c]$ with $c=0$. 
Thus,
\begin{align*}
    S_{\varepsilon}'&=-\sum\limits_{\substack{[Na,b,c]\in\mathcal{Q}_{N,D,\rho} \\ c>0}}\int_{-\varepsilon}^{\varepsilon}\frac{t^{n}}{(-Nat^{2}+bit+c)^{k}}dt-\sum\limits_{\substack{[Na,b,c]\in\mathcal{Q}_{N,D,\rho} \\ c<0}}\int_{-\varepsilon}^{\varepsilon}\frac{t^{n}}{(-Nat^{2}+bit+c)^{k}}dt.
\end{align*}
If $Q=[Na,b,c]\in\mathcal{Q}_{N,D,\rho}$ and $c<0$, then $-Q=[N(-a),-b,-c]\in\mathcal{Q}_{N,D,-\rho}$ and $-c>0$. Moreover,
$$-N(-a)t^{2}+i(-b)t+(-c)=-(-Nat^{2}+bit+c),$$
so
$$\frac{t^{n}}{(-Nat^{2}+bit+c)^{k}}=(-1)^{k}\frac{t^{n}}{(-N(-a)t^{2}+i(-b)t+(-c))^{k}}.$$
Consequently,
\begin{equation*}
    \begin{split}
        S_{\varepsilon}'&=-\sum\limits_{\substack{[Na,b,c]\in\mathcal{Q}_{N,D,\rho} \\ c>0}}\int_{-\varepsilon}^{\varepsilon}\frac{t^{n}}{(-Nat^{2}+bit+c)^{k}}dt\\
        &\qquad\qquad\qquad\qquad\qquad-(-1)^{k}\sum\limits_{\substack{[Na,b,c]\in\mathcal{Q}_{N,D,-\rho} \\ c>0}}\int_{-\varepsilon}^{\varepsilon}\frac{t^{n}}{(-Nat^{2}+bit+c)^{k}}dt\\
        &=-\sum_{\mathcal{C}\in\mathcal{Q}_{N,D,\rho}/\Gamma_{0}(N)}\sum\limits_{\substack{[Na,b,c]\in\mathcal{C} \\ c>0}}\int_{-\varepsilon}^{\varepsilon}\frac{t^{n}}{(-Nat^{2}+bit+c)^{k}}dt\\
        &\qquad\qquad\qquad\qquad\qquad-(-1)^{k}\sum_{\mathcal{C}\in\mathcal{Q}_{N,D,-\rho}/\Gamma_{0}(N)}\sum\limits_{\substack{[Na,b,c]\in\mathcal{C} \\ c>0}}\int_{-\varepsilon}^{\varepsilon}\frac{t^{n}}{(-Nat^{2}+bit+c)^{k}}dt.
    \end{split}
\end{equation*}
Applying Lemma \ref{lem-limit} and letting $\varepsilon\rightarrow 0^{+}$, we obtain
\begin{equation*}
    \begin{split}
        \lim_{\varepsilon\rightarrow 0^{+}}S_{\varepsilon}'&=-\delta_{n,0}\frac{2\pi}{N(2k-1)}\cdot\frac{1}{\zeta(2k)\prod_{p\mid N}(1-p^{-2k})}\\
        &\qquad\qquad\qquad\times\left(\sum_{\mathcal{C}\in\mathcal{Q}_{N,D,\rho}/\Gamma_{0}(N)}\zeta_{\mathcal{C}}^{(N)}(k)+(-1)^{k}\sum_{\mathcal{C}\in\mathcal{Q}_{N,D,-\rho}/\Gamma_{0}(N)}\zeta_{\mathcal{C}}^{(N)}(k)\right).
    \end{split}
\end{equation*}

It remains to calculate $\lim_{\varepsilon\rightarrow 0^{+}}S_{\varepsilon}''$.
Putting $m=2k-2-n$ and $\eta=\varepsilon/N$, and arguing as in the computation of $\lim_{\varepsilon\rightarrow 0^{+}}S_{\varepsilon}'$, we have
\begin{align*}
    S_{\varepsilon}''&=-(-1)^{k}N^{k-1-n}\sum\limits_{\substack{[Na,b,c]\in\mathcal{Q}_{N,D,-\rho} \\ c>0}}\int_{-\eta}^{\eta}\frac{t^{m}}{(-Nat^{2}+bit+c)^{k}}dt\\
    &\qquad\qquad\qquad-(-1)^{k}N^{k-1-n}\sum\limits_{\substack{[Na,b,c]\in\mathcal{Q}_{N,D,-\rho} \\ c<0}}\int_{-\eta}^{\eta}\frac{t^{m}}{(-Nat^{2}+bit+c)^{k}}dt\\
    &=-(-1)^{k}N^{k-1-n}\sum\limits_{\substack{[Na,b,c]\in\mathcal{Q}_{N,D,-\rho} \\ c>0}}\int_{-\eta}^{\eta}\frac{t^{m}}{(-Nat^{2}+bit+c)^{k}}dt\\
    &\qquad\qquad\qquad-N^{k-1-n}\sum\limits_{\substack{[Na,b,c]\in\mathcal{Q}_{N,D,\rho} \\ c>0}}\int_{-\eta}^{\eta}\frac{t^{m}}{(-Nat^{2}+bit+c)^{k}}dt\\
    &=-N^{k-1-n}\sum_{\mathcal{C}\in\mathcal{Q}_{N,D,\rho}/\Gamma_{0}(N)}\sum\limits_{\substack{[Na,b,c]\in\mathcal{C} \\ c>0}}\int_{-\eta}^{\eta}\frac{t^{m}}{(-Nat^{2}+bit+c)^{k}}dt\\
    &\qquad\qquad\qquad-(-1)^{k}N^{k-1-n}\sum_{\mathcal{C}\in\mathcal{Q}_{N,D,-\rho}/\Gamma_{0}(N)}\sum\limits_{\substack{[Na,b,c]\in\mathcal{C} \\ c>0}}\int_{-\eta}^{\eta}\frac{t^{m}}{(-Nat^{2}+bit+c)^{k}}dt.
\end{align*}
Since $0\leq m\leq 2k-2$, one can apply Lemma \ref{lem-limit}. Using Lemma \ref{lem-limit} and $\delta_{m,0}=\delta_{n,2k-2}$, we get
\begin{align*}
    \lim_{\varepsilon\rightarrow 0^{+}}S_{\varepsilon}''&=-\delta_{m,0}\frac{2\pi N^{k-2-n}}{(2k-1)}\cdot\frac{1}{\zeta(2k)\prod_{p\mid N}(1-p^{-2k})}\\
    &\qquad\qquad\qquad\times\left(\sum_{\mathcal{C}\in\mathcal{Q}_{N,D,\rho}/\Gamma_{0}(N)}\zeta_{\mathcal{C}}^{(N)}(k)+(-1)^{k}\sum_{\mathcal{C}\in\mathcal{Q}_{N,D,-\rho}/\Gamma_{0}(N)}\zeta_{\mathcal{C}}^{(N)}(k)\right)\\
    &=-\delta_{n,2k-2}\frac{2\pi}{N^{k}(2k-1)}\cdot\frac{1}{\zeta(2k)\prod_{p\mid N}(1-p^{-2k})}\\
    &\qquad\qquad\qquad\times\left(\sum_{\mathcal{C}\in\mathcal{Q}_{N,D,\rho}/\Gamma_{0}(N)}\zeta_{\mathcal{C}}^{(N)}(k)+(-1)^{k}\sum_{\mathcal{C}\in\mathcal{Q}_{N,D,-\rho}/\Gamma_{0}(N)}\zeta_{\mathcal{C}}^{(N)}(k)\right).
\end{align*}
\end{proof}

\section{Proof of Theorem \ref{thm-zetaformula}}\label{sec-zetaformula}

In this section, we establish Theorem \ref{thm-zetaformula}.

\begin{lemma}\label{lem-frickeinv}
    Let $k,N,D$ and $\rho$ be as in Theorem \ref{thm-period}. Then
    $$f_{k,N,D,\rho}|_{2k}W_{N}=f_{k,N,D,-\rho}=f_{k,N,D,\rho}'.$$
\end{lemma}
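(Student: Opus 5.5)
Both identities are direct computations with the defining series; no zeta-function input is needed.

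\emph{First identity.} We compute $f_{k,N,D,\rho}|_{2k}W_N$ term by term. This is legitimate because the series converges absolutely and locally uniformly for $k\ge2$. With $W_N=\left(\begin{smallmatrix}0&-1/\sqrt N\\ \sqrt N&0\end{smallmatrix}\right)$ we have $\det W_N=1$ and $(f|_{2k}W_N)(z)=(\sqrt N z)^{-2k}f(-1/(Nz))$. For a single term $Q=[Na,b,c]$ this gives
$$(\sqrt N z)^{-2k}\Big(\frac{Na}{N^{2}z^{2}}-\frac{b}{Nz}+c\Big)^{-k}=N^{-k}z^{-2k}\big(N^{-1}z^{-2}(a-bz+Ncz^{2})\big)^{-k}=(Ncz^{2}-bz+a)^{-k}.$$
So the term attached to $Q$ becomes the term attached to $[Nc,-b,a]=Q\circ W_N$, in the sense of \eqref{eq-Fricke}. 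The prefactor $\frac{D^{k-1/2}}{2\pi\binom{2k-2}{k-1}}$ is unchanged.

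It remains to check that $Q\mapsto Q\circ W_N$ is a bijection $\mathcal{Q}_{N,D,\rho}\to\mathcal{Q}_{N,D,-\rho}$, which the paper notes after \eqref{eq-Fricke}. Explicitly, the discriminant is preserved, and $-b\equiv-\rho\pmod{2N}$. The map is invertible, since applying it twice gives $[Na,b,c]\mapsto[Nc,-b,a]\mapsto[Na,b,c]$. Reindexing the sum then gives $f_{k,N,D,\rho}|_{2k}W_N=f_{k,N,D,-\rho}$.

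\emph{Second identity.} This is the set equality $\mathcal{Q}_{N,D,\rho}'=\mathcal{Q}_{N,D,-\rho}$. By definition, $\mathcal{Q}_{N,D,\rho}'$ consists of the forms $[Na,-b,c]$ with $b^2-4Nac=D$ and $b\equiv\rho\pmod{2N}$. Writing $b'=-b$, this is exactly the set of $[Na,b',c]$ with $b'^2-4Nac=D$ and $b'\equiv-\rho\pmod{2N}$, which is $\mathcal{Q}_{N,D,-\rho}$. Since the two index sets coincide, so do the series, and $f_{k,N,D,-\rho}=f'_{k,N,D,\rho}$.

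The only point needing care is justifying termwise action of the slash operator and reindexing, which absolute convergence handles.
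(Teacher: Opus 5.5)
Your proposal is correct and follows the paper's proof essentially verbatim. The first equality comes from the termwise identity $N^{-k}z^{-2k}Q(-1/(Nz),1)^{-k}=(Q\circ W_N)(z,1)^{-k}$ together with the bijection $\mathcal{Q}_{N,D,\rho}\circ W_N=\mathcal{Q}_{N,D,-\rho}$, and the second is the set equality $\mathcal{Q}_{N,D,\rho}'=\mathcal{Q}_{N,D,-\rho}$; your remark that absolute, locally uniform convergence for $k\ge 2$ justifies slashing termwise and reindexing is a small point the paper leaves implicit.
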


\begin{proof}
    For any $Q(x,y)=[Na,b,c]\in\mathcal{Q}_{N,D,\rho}$,
    \begin{align*}
        Q\left(-\frac{1}{Nz},1\right)&=Na\cdot\left(-\frac{1}{Nz}\right)^{2}+b\cdot\left(-\frac{1}{Nz}\right)+c=(Nz)^{-2}\cdot(N^{2}cz^{2}-Nbz+Na)\\
        &=N^{-1}z^{-2}(Ncz^{2}-bz+a)=N^{-1}z^{-2}(Q\circ W_{N})(z,1).
    \end{align*}
    Thus,
    \begin{align*}
        (f_{k,N,D,\rho}|_{2k}W_{N})(z)&=(\sqrt{N}z)^{-2k}f_{k,N,D,\rho}\left(-\frac{1}{Nz}\right)=N^{-k}z^{-2k}f_{k,N,D,\rho}\left(-\frac{1}{Nz}\right)\\
        &=\frac{D^{k-1/2}}{\pi\binom{2k-2}{k-1}}\cdot\frac{1}{2}\sum_{Q\in\mathcal{Q}_{N,D,\rho}}N^{-k}z^{-2k}Q\left(-\frac{1}{Nz},1\right)^{-k}\\
        &=\frac{D^{k-1/2}}{\pi\binom{2k-2}{k-1}}\cdot\frac{1}{2}\sum_{Q\in\mathcal{Q}_{N,D,\rho}}N^{-k}z^{-2k}(N^{-1}z^{-2}(Q\circ W_{N})(z,1))^{-k}\\
        &=\frac{D^{k-1/2}}{\pi\binom{2k-2}{k-1}}\cdot\frac{1}{2}\sum_{Q\in\mathcal{Q}_{N,D,\rho}}(Q\circ W_{N})(z,1)^{-k}\\
        &=\frac{D^{k-1/2}}{\pi\binom{2k-2}{k-1}}\cdot\frac{1}{2}\sum_{Q\in\mathcal{Q}_{N,D,\rho}\circ W_{N}}Q(z,1)^{-k}.
    \end{align*}
    Since $\mathcal{Q}_{N,D,\rho}\circ W_{N}=\mathcal{Q}_{N,D,-\rho}$ by \eqref{eq-Fricke},
    $$(f_{k,N,D,\rho}|_{2k}W_{N})(z)=\frac{D^{k-1/2}}{\pi\binom{2k-2}{k-1}}\cdot\frac{1}{2}\sum_{Q\in\mathcal{Q}_{N,D,-\rho}}Q(z,1)^{-k}=f_{k,N,D,-\rho}(z),$$
    which proves the first equality.

    For the second equality, observe that
    \begin{align*}
        \mathcal{Q}_{N,D,\rho}^{'}&=\{[Na,-b,c]:[Na,b,c]\in\mathcal{Q}_{N,D,\rho}\}\\
        &=\{[Na,-b,c]:a,b,c\in\mathbb{Z},~b^{2}-4Nac=D,~b\equiv\rho\pmod{2N}\}\\
        &=\{[Na,b,c]:a,b,c\in\mathbb{Z},~b^{2}-4Nac=D,~b\equiv-\rho\pmod{2N}\}\\
        &=\mathcal{Q}_{N,D,-\rho}.
    \end{align*}
    This shows that
    \begin{align*}
        f_{k,N,D,\rho}'(z)&=\frac{D^{k-1/2}}{\pi\binom{2k-2}{k-1}}\cdot\frac{1}{2}\sum_{Q\in\mathcal{Q}_{N,D,\rho}'}Q(z,1)^{-k}=\frac{D^{k-1/2}}{\pi\binom{2k-2}{k-1}}\cdot\frac{1}{2}\sum_{Q\in\mathcal{Q}_{N,D,-\rho}}Q(z,1)^{-k}\\
        &=f_{k,N,D,-\rho}(z).
    \end{align*}
\end{proof}

Let $k$ and $N$ be positive integers. For $f=\sum_{n=1}^{\infty}a_{f}(n)q^{n}\in S_{2k}(N)$, the \textit{$L$-function of} $f$ is defined by the Dirichlet series
$$L(s,f):=\sum_{n=1}^{\infty}\frac{a_{f}(n)}{n^{s}}.$$
The series on the right-hand side converges absolutely for all complex numbers $s$ with $\mathrm{Re}(s)>k+1$, and defines a holomorphic function on the right half-plane $\mathrm{Re}(s)>k+1$. It is known that $L(s,f)$ has an analytic continuation to the whole complex plane. If $f|_{2k}W_{N}=f$, then the \textit{completed $L$-function}
$$\Lambda(s,f):=N^{s/2}(2\pi)^{-s}\Gamma(s)L(s,f)$$
satisfies the functional equation
\begin{equation}\label{eq-functionaleq}
    \Lambda(s,f)=(-1)^{k}\Lambda(2k-s,f).
\end{equation}
For details, see \cite[pp. 270--271]{K92}.

Now we define the \textit{scalar-valued period polynomial $p_{f}(X)$ of $f$} by
\begin{equation}\label{eq-svpp}
    p_{f}(X):=\int_{0}^{i\infty}f(z)(X-z)^{2k-2}dz.
\end{equation}
Write
$$p_{f}(X)=\sum_{n=0}^{2k-2}p_{n,f}X^{2k-2-n}.$$
Comparing \eqref{eq-svpp} with \eqref{eq-vvpp}, we have
$$p_{f}(X)=r_{f}(I)(X),$$
and hence
$$p_{n,f}=i^{-n+1}\binom{2k-2}{n}r_{n,f}(I)$$
for $n=0,1,\ldots,2k-2$. It is a known fact that
\begin{equation}\label{eq-periodlval}
    r_{n,f}(I)=\int_{0}^{\infty}f(it)t^{n}dt=N^{-(n+1)/2}\Lambda(n+1,f)
\end{equation}
for every $n\in\{0,1,\ldots,2k-2\}$; see \cite[p. 268]{K92} for instance. The following lemma provides us with an explicit relation between the coefficients $p_{n,f}$ and $p_{2k-2-n,f}$ ($n=0,1,\ldots,k-1$) for a cusp form $f\in S_{2k}(N)$ which is invariant under the action of Fricke involution $W_{N}$.

\begin{lemma}\label{lem-coeffrel}
    Let $k$ and $N$ be positive integers, and let $f\in S_{2k}(N)$. Suppose that $f|_{2k}W_{N}=f$. For $n=0,1,\ldots,k-1$,
    $$p_{n,f}=(-1)^{n+1}N^{k-1-n}p_{2k-2-n,f}.$$
\end{lemma}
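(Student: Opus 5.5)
The plan is to reduce the statement to the functional equation \eqref{eq-functionaleq} via the identity \eqref{eq-periodlval}, which expresses each period $r_{n,f}(I)$ through the completed $L$-function. Fix $n\in\{0,1,\ldots,k-1\}$ and put $m:=2k-2-n$, so that $0\le m\le 2k-2$. Then the identity \eqref{eq-periodlval} applies to both indices $n$ and $m$.

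First I would use $f|_{2k}W_{N}=f$ and the functional equation \eqref{eq-functionaleq} at $s=n+1$. Since $2k-(n+1)=m+1$, this gives $\Lambda(n+1,f)=(-1)^{k}\Lambda(m+1,f)$. Substituting into \eqref{eq-periodlval} for $n$ and for $m$ yields
\[
r_{n,f}(I)=N^{-(n+1)/2}(-1)^{k}\Lambda(m+1,f)=(-1)^{k}N^{(m-n)/2}\,r_{m,f}(I)=(-1)^{k}N^{k-1-n}\,r_{m,f}(I),
\]
because $(m-n)/2=k-1-n$.

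Next I would convert this to the coefficients $p_{n,f}=i^{-n+1}\binom{2k-2}{n}r_{n,f}(I)$. Using $\binom{2k-2}{n}=\binom{2k-2}{m}$, the binomial factors cancel in the ratio $p_{n,f}/p_{m,f}$. The remaining power of $i$ is $i^{-n+1}/i^{-m+1}=i^{m-n}=i^{2(k-1-n)}=(-1)^{k-1-n}$. Hence
\[
p_{n,f}=(-1)^{k-1-n}(-1)^{k}N^{k-1-n}p_{m,f}=(-1)^{n+1}N^{k-1-n}p_{2k-2-n,f},
\]
where the last step uses $(-1)^{2k-1-n}=(-1)^{n+1}$.

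There is no real obstacle; the only care needed is bookkeeping of the powers of $N$ and $i$. If one prefers not to rely on \eqref{eq-periodlval} and \eqref{eq-functionaleq} as quoted, the same identity follows directly. Substitute $t\mapsto 1/(Nt)$ in $r_{n,f}(I)=\int_0^\infty f(it)t^n\,dt$ and use $f(i/(Nt))=f|_{2k}W_N$ evaluated at $it$, namely $f(i/(Nt))=(-1)^{k}N^{k}t^{2k}f(it)$. This produces $r_{n,f}(I)=(-1)^{k}N^{k-1-n}r_{m,f}(I)$, and I would use this computation as a consistency check on the sign.
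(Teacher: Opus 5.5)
Your argument is correct and is essentially the paper's proof: both combine \eqref{eq-periodlval} with the functional equation \eqref{eq-functionaleq} at $s=n+1$ and then track the powers of $i$ and $N$. The only difference is that the paper substitutes directly into $p_{2k-2-n,f}$, while you first derive $r_{n,f}(I)=(-1)^{k}N^{k-1-n}r_{2k-2-n,f}(I)$; your ``ratio'' step should be read as a substitution rather than a division, since $p_{2k-2-n,f}$ may vanish, and your direct $t\mapsto 1/(Nt)$ check is also correct.
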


\begin{proof}
    Fix $n\in\{0,1,\ldots,k-1\}$. By \eqref{eq-periodlval}, we have
    \begin{equation*}
        p_{n,f}=i^{-n+1}\binom{2k-2}{n}r_{n,f}(I)=i^{-n+1}\binom{2k-2}{n}N^{-(n+1)/2}\Lambda(n+1,f).
    \end{equation*}

    By \eqref{eq-periodlval} again,
    \begin{align*}
        p_{2k-2-n,f}&=i^{-(2k-2-n)+1}\binom{2k-2}{2k-2-n}N^{-(2k-2-n+1)/2}\Lambda(2k-2-n+1,f)\\
        &=(-1)^{k}i^{n+3}\binom{2k-2}{n}N^{-(2k-1-n)/2}\Lambda(2k-n-1,f).
    \end{align*}
    Since $f|_{2k}W_{N}=f$, one can apply the functional equation \eqref{eq-functionaleq} to obtain
    $$\Lambda(2k-n-1,f)=(-1)^{k}\Lambda(n+1,f).$$
    Therefore,
    \begin{align*}
        (-1)^{n+1}N^{k-1-n}p_{2k-2-n,f}&=i^{-2n-2}N^{k-1-n}p_{2k-2-n,f}\\
        &=i^{-2n-2}N^{k-1-n}\cdot i^{n+3}\binom{2k-2}{n}N^{-(2k-1-n)/2}\Lambda(n+1,f)\\
        &=i^{-n+1}\binom{2k-2}{n}N^{-(n+1)/2}\Lambda(n+1,f)\\
        &=p_{n,f}
    \end{align*}
    as desired.
\end{proof}

We are now ready to prove Theorem \ref{thm-zetaformula}.

\begin{proof}[Proof of Theorem \ref{thm-zetaformula}]
    For simplicity, we put
    $$r(X):=r_{f_{k,N,D,\rho}^{+}}^{+}(I)(X)+r_{f_{k,N,D,\rho}^{-}}^{-}(I)(X).$$
    Since $r_{f_{k,N,D,\rho}^{+}}^{+}(I)(-X)=r_{f_{k,N,D,\rho}^{+}}^{+}(I)(X)$ and $r_{f_{k,N,D,\rho}^{-}}^{-}(I)(-X)=-r_{f_{k,N,D,\rho}^{-}}^{-}(I)(X)$, the even-degree (resp. odd-degree) terms of $r(X)$ coincide with those of $r_{f_{k,N,D,\rho}^{+}}^{+}(I)(X)$ (resp. $r_{f_{k,N,D,\rho}^{-}}^{-}(I)(X)$). On the other hand, for any polynomial $P(X)$, the even part $P^{+}(X)=\frac{1}{2}(P(X)+P(-X))$ has the same even-degree coefficients as $P(X)$. Applying this to $P(X)=r_{f_{k,N,D,\rho}^{+}}^{+}(I)(X)$, we see that the even-degree terms of
    $r_{f_{k,N,D,\rho}^{+}}^{+}(I)(X)$ coincide with those of $r_{f_{k,N,D,\rho}^{+}}(I)(X)$.
    Since $p_{f}(X)=r_{f}(I)(X)$ for $f\in S_{2k}(N)$, it follows that the even-degree terms of
    $r_{f_{k,N,D,\rho}^{+}}^{+}(I)(X)$ coincide with those of $p_{f_{k,N,D,\rho}^{+}}(X)$. In particular,
    \begin{align*}
        p_{0,f_{k,N,D,\rho}^{+}}&=\text{the leading coefficient of}~r(X),\\
        p_{2k-2,f_{k,N,D,\rho}^{+}}&=\text{the constant term of}~r(X).
    \end{align*}

    Lemma \ref{lem-frickeinv} tells us that $f_{k,N,D,\rho}^{+}|_{2k}W_{N}=f_{k,N,D,\rho}^{+}$. It follows from Lemma \ref{lem-coeffrel} that
    $$p_{0,f_{k,N,D,\rho}^{+}}=-N^{k-1}p_{2k-2,f_{k,N,D,\rho}^{+}}.$$
    By Theorem \ref{thm-period},
    \begin{align*}
        p_{2k-2,f_{k,N,D,\rho}^{+}}&=\sum\limits_{\substack{[Na,b,c]\in\mathcal{Q}_{N,D,\rho} \\ a>0>c}}c^{k-1}-\sum\limits_{\substack{[Na,b,c]\in\mathcal{Q}_{N,D,\rho} \\ a<0<c}}c^{k-1}\\
        &\quad+\frac{\pi}{N^{k}(2k-1)c_{k,D}}\cdot\frac{\zeta_{N,D,-\rho}(k)+(-1)^{k}\zeta_{N,D,\rho}(k)}{\zeta(2k)\prod_{p\mid N}(1-p^{-2k})},
    \end{align*}
    and hence
    \begin{align*}
        p_{0,f_{k,N,D,\rho}^{+}}&=-N^{k-1}\left(\sum\limits_{\substack{[Na,b,c]\in\mathcal{Q}_{N,D,\rho} \\ a>0>c}}c^{k-1}-\sum\limits_{\substack{[Na,b,c]\in\mathcal{Q}_{N,D,\rho} \\ a<0<c}}c^{k-1}\right)\\
            &\quad-\frac{\pi}{N(2k-1)c_{k,D}}\cdot\frac{\zeta_{N,D,-\rho}(k)+(-1)^{k}\zeta_{N,D,\rho}(k)}{\zeta(2k)\prod_{p\mid N}(1-p^{-2k})}.
    \end{align*}
    Since $k$ is odd, we have
    \begin{equation}\label{eq-firstleading}
        \begin{split}
            p_{0,f_{k,N,D,\rho}^{+}}&=\frac{\pi}{N(2k-1)c_{k,D}}\cdot\frac{\zeta_{N,D,\rho}(k)-\zeta_{N,D,-\rho}(k)}{\zeta(2k)\prod_{p\mid N}(1-p^{-2k})}\\
            &\qquad\qquad\qquad\qquad-N^{k-1}\left(\sum\limits_{\substack{[Na,b,c]\in\mathcal{Q}_{N,D,\rho} \\ a>0>c}}c^{k-1}-\sum\limits_{\substack{[Na,b,c]\in\mathcal{Q}_{N,D,\rho} \\ a<0<c}}c^{k-1}\right).
        \end{split}
    \end{equation}
    
    Using Theorem \ref{thm-period} again, we get
    \begin{align*}
        p_{0,f_{k,N,D,\rho}^{+}}&=\sum\limits_{\substack{[Na,b,c]\in\mathcal{Q}_{N,D,\rho} \\ a>0>c}}(Na)^{k-1}-\sum\limits_{\substack{[Na,b,c]\in\mathcal{Q}_{N,D,\rho} \\ a<0<c}}(Na)^{k-1}\\
        &\quad -\frac{\pi}{N(2k-1)c_{k,D}}\cdot\frac{\zeta_{N,D,\rho}(k)+(-1)^{k}\zeta_{N,D,-\rho}(k)}{\zeta(2k)\prod_{p\mid N}(1-p^{-2k})}.
    \end{align*}
    Consider the map $\iota:\mathcal{Q}_{N,D,\rho}\rightarrow\mathcal{Q}_{N,D,\rho}$ defined by $\iota([Na,b,c])=[-Nc,b,-a]$. From the definition, one can check that $\iota$ is a well-defined map satisfying $\iota^{2}=\mathrm{id}$. Additionally, $\iota$ restricts to a bijection of the subset $\{[Na,b,c]\in\mathcal{Q}_{N,D,\rho}:a>0>c\}$, and $\iota$ also restricts to a bijection of the subset $\{[Na,b,c]\in\mathcal{Q}_{N,D,\rho}:a<0<c\}$. Hence,
    \begin{align*}
        p_{0,f_{k,N,D,\rho}^{+}}&=\sum\limits_{\substack{[Na,b,c]\in\mathcal{Q}_{N,D,\rho} \\ a>0>c}}(-Nc)^{k-1}-\sum\limits_{\substack{[Na,b,c]\in\mathcal{Q}_{N,D,\rho} \\ a<0<c}}(-Nc)^{k-1}\\
        &\quad -\frac{\pi}{N(2k-1)c_{k,D}}\cdot\frac{\zeta_{N,D,\rho}(k)+(-1)^{k}\zeta_{N,D,-\rho}(k)}{\zeta(2k)\prod_{p\mid N}(1-p^{-2k})}\\
        &=(-1)^{k-1}N^{k-1}\left(\sum\limits_{\substack{[Na,b,c]\in\mathcal{Q}_{N,D,\rho} \\ a>0>c}}c^{k-1}-\sum\limits_{\substack{[Na,b,c]\in\mathcal{Q}_{N,D,\rho} \\ a<0<c}}c^{k-1}\right)\\
        &\quad -\frac{\pi}{N(2k-1)c_{k,D}}\cdot\frac{\zeta_{N,D,\rho}(k)+(-1)^{k}\zeta_{N,D,-\rho}(k)}{\zeta(2k)\prod_{p\mid N}(1-p^{-2k})}.
    \end{align*}
    Since $k$ is odd, we obtain
    \begin{equation}\label{eq-secondleading}
        \begin{split}
            p_{0,f_{k,N,D,\rho}^{+}}&=\frac{\pi}{N(2k-1)c_{k,D}}\cdot\frac{\zeta_{N,D,-\rho}(k)-\zeta_{N,D,\rho}(k)}{\zeta(2k)\prod_{p\mid N}(1-p^{-2k})}\\
            &\qquad\qquad\qquad\qquad+N^{k-1}\left(\sum\limits_{\substack{[Na,b,c]\in\mathcal{Q}_{N,D,\rho} \\ a>0>c}}c^{k-1}-\sum\limits_{\substack{[Na,b,c]\in\mathcal{Q}_{N,D,\rho} \\ a<0<c}}c^{k-1}\right).
        \end{split}
    \end{equation}
    From \eqref{eq-secondleading}, \eqref{eq-firstleading}, $c_{k,D}=\binom{2k-2}{k-1}D^{1/2-k}\pi$ and the fact that $\zeta(2n)=\frac{2^{2n-1}B_{2n}\pi^{2n}}{(2n)!}$ for any positive odd integer $n$, we see that
    \begin{align*}
        \zeta_{N,D,\rho}(k)-\zeta_{N,D,-\rho}(k)&=\frac{2^{2k-1}N^{k}(2k-1)D^{1/2-k}B_{2k}\pi^{2k}}{(2k)!}\binom{2k-2}{k-1}\\
        &\qquad\qquad\times\prod_{p\mid N}(1-p^{-2k})\left(\sum\limits_{\substack{[Na,b,c]\in\mathcal{Q}_{N,D,\rho} \\ a>0>c}}c^{k-1}-\sum\limits_{\substack{[Na,b,c]\in\mathcal{Q}_{N,D,\rho} \\ a<0<c}}c^{k-1}\right).
    \end{align*}
\end{proof}

\section{Proof of Theorem \ref{thm-dedekindzeta}}\label{sec-dedekindzeta}

In this section, we prove Theorem \ref{thm-dedekindzeta}.
We first fix the notation.
Throughout this section, let $N$ and $D$ be as in the statement of Theorem \ref{thm-dedekindzeta}, and let $K=\mathbb{Q}(\sqrt{D})$. Since $D\equiv 1\pmod{4N}$, we have $D\equiv 1\pmod{4}$ and hence
$$\mathcal{O}_{K}=\mathbb{Z}\left[\omega_{0}\right],\quad\omega_{0}:=\frac{1+\sqrt{D}}{2}.$$
For an integral ideal $\mathfrak{a}\subset\mathcal{O}_{K}$, we denote by $\mathrm{Nm}(\mathfrak{a})$ the norm of $\mathfrak{a}$.
We write $\mathrm{Cl}^{+}(\mathcal{O}_{K})$ for the narrow ideal class group of $K$.

For a primitive form $Q=[A,B,C]\in\mathcal{Q}_D^{0}$, set
$$\omega_{B}:=\frac{-B+\sqrt{D}}{2}\in\mathcal{O}_{K},\quad\alpha_{Q}:=\frac{-B+\sqrt{D}}{2A},\quad\alpha_{Q}':=\frac{-B-\sqrt{D}}{2A},$$
and define the associated ideal
$$\mathfrak{a}_{Q}:=(A,\omega_{B})=\left(A,\frac{-B+\sqrt{D}}{2}\right)\subset\mathcal{O}_{K}.$$
Note that $\omega_{B}=A\alpha_{Q}$ and $\alpha_{Q}'\omega_{B}=C$.

Next, we collect several auxiliary results needed in our proof.

\begin{lemma}\label{lem-pairtoideal}
Let $Q=[A,B,C]\in\mathcal{Q}_{D}^{0}$.
For $(u,v)\in\mathbb{Z}^{2}$, define
$$\mathfrak{b}_{u,v}(Q):=(u-v\alpha_{Q}')\mathfrak{a}_{Q}.$$
Then
\begin{enumerate}
    \item[\textnormal{(i)}] If $Q(u,v)>0$, then $\mathfrak{b}_{u,v}(Q)$ is an integral ideal of $\mathcal{O}_{K}$.
    \item[\textnormal{(ii)}] One has $\mathrm{Nm}(\mathfrak{b}_{u,v}(Q))=Q(u,v)$.
    \item[\textnormal{(iii)}] The map $(u,v)\mapsto\mathfrak{b}_{u,v}(Q)$ induces a well-defined bijection between
    \begin{equation}\label{eq-pairideal}
        \{(u,v)\in\mathbb{Z}^2/\mathrm{SL}_{2}(\mathbb{Z})_{Q}:\ Q(u,v)>0\}\quad\text{and}\quad
        \{\mathfrak{b}\subset\mathcal{O}_{K}:[\mathfrak{b}]=[\mathfrak{a}_{Q}]~\text{in}~\mathrm{Cl}^{+}(\mathcal{O}_{K})\}.
    \end{equation}
\end{enumerate}
\end{lemma}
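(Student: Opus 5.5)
The plan is to use the standard correspondence between primitive forms and ideals, realized through the $\mathbb{Z}$-basis $\{A,\omega_B\}$ of $\mathfrak{a}_Q$. First I would record the classical facts. Since $Q$ is primitive of discriminant $D$ and $B\equiv D\pmod 2$, the ideal $\mathfrak{a}_Q$ is a free $\mathbb{Z}$-module with basis $A,\omega_B$. Its norm is $|A|$. Moreover, $N(xA+y\omega_B)=A\,Q(x,y)$, up to the sign convention; this is checked directly from $\omega_B\omega_B'=C\cdot A$ and $\omega_B+\omega_B'=-B$.

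Next, write
$$(u-v\alpha_Q')A=uA-v\alpha_Q'A.$$
Here $\alpha_Q' A=(-B-\sqrt D)/2=\omega_B'=-B-\omega_B$, so $(u-v\alpha_Q')A=(u+vB)A/A\cdot\ldots$. More precisely, I would verify that $u-v\alpha_Q'$ lies in $\frac1A\mathfrak{a}_Q'$-type lattices. The cleaner route is to note that
$$(u-v\alpha_Q')\mathfrak{a}_Q=\tfrac{1}{A}(uA-v\omega_B')\,\mathfrak{a}_Q.$$
Since $\mathfrak{a}_Q\mathfrak{a}_Q'=(A)$ for primitive $Q$, this equals $(uA-v\omega_B')\cdot(\mathfrak{a}_Q')^{-1}$. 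Now $uA-v\omega_B'=uA+vB+v\omega_B\in\mathfrak{a}_Q$; conjugating, the element $uA-v\omega_B'$ lies in $\mathfrak{a}_Q'$. Hence $\mathfrak{b}_{u,v}(Q)$ is integral. This gives (i) regardless of positivity; positivity is only needed to fix the narrow class. Taking norms gives
$$\mathrm{Nm}(\mathfrak{b})=|N(u-v\alpha_Q')|\,|A|=|Q(u,v)|/|A|\cdot|A|=Q(u,v),$$
using $N(u-v\alpha_Q')=Q(u,v)/A$. This proves (ii).

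For (iii), I would check three things. First, well-definedness: if $\gamma\in\mathrm{SL}_2(\mathbb{Z})_Q$, then $\gamma$ corresponds to multiplication by a unit $\varepsilon$ of norm $+1$ acting on the basis $(1,-\alpha_Q')$, so $\mathfrak{b}$ is unchanged. Second, the narrow class: $\mathfrak{b}=\lambda\mathfrak{a}_Q$ with
$$\lambda=u-v\alpha_Q',\qquad N(\lambda)=Q(u,v)/A.$$
The sign of $A$ must be matched with the orientation convention for $[\mathfrak{a}_Q]$ in $\mathrm{Cl}^+$, which is the standard choice of oriented basis. I would adopt the convention that $[\mathfrak{a}_Q]$ means the narrow class of $\frac{1}{?}$-twisted $\mathfrak{a}_Q$ making the statement consistent. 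This is the delicate step where signs must be handled. Third, bijectivity: any integral $\mathfrak{b}$ in that narrow class equals $\lambda\mathfrak{a}_Q$ with $\lambda\in\mathfrak{a}_Q^{-1}=\frac1A\mathfrak{a}_Q'=\mathbb{Z}+\mathbb{Z}\alpha_Q'$ of appropriate norm sign, so $\lambda=u-v\alpha_Q'$. The element $\lambda$ is determined up to totally positive units, that is, up to norm-one units with the right sign. These correspond exactly to the stabilizer $\mathrm{SL}_2(\mathbb{Z})_Q$ via the classical isomorphism between $\mathrm{SL}_2(\mathbb{Z})_Q$ and the norm-one units (Pell solutions).

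I expect the main obstacle to be the sign bookkeeping in (iii). This covers both the orientation in the narrow class when $A<0$ and the question of whether $\pm$ units ($-1$ acts as $-I\in\mathrm{SL}_2(\mathbb{Z})_Q$) and norm $-1$ units are correctly absorbed. I would handle it by explicitly identifying $\mathrm{SL}_2(\mathbb{Z})_Q$ with $\{\varepsilon\in\mathcal{O}_K^\times:N\varepsilon=1\}$.
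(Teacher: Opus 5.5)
Your argument follows the paper's route. Integrality and the norm come from the basis $\{A,\omega_B\}$; your use of $\mathfrak{a}_Q\mathfrak{a}_Q'=(A)$ is only a cosmetic variant of the paper's explicit generators $uA-v\omega_B'$, $u\omega_B-vC$. The narrow class comes from the sign of $\mathrm{Nm}(u-v\alpha_Q')=Q(u,v)/A$, surjectivity from $\mathfrak{a}_Q^{-1}=\mathbb{Z}+\mathbb{Z}\alpha_Q'$, and well-definedness and injectivity from identifying $\mathrm{SL}_2(\mathbb{Z})_Q$ with the units of norm $+1$. For $A>0$ this is a complete proof at the paper's level of detail.

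The step you leave as a placeholder, the narrow class when $A<0$, cannot be settled by sign bookkeeping, because (iii) is false as literally stated. Take $D=21$ (fundamental, $\equiv 1\pmod 4$), $Q=[-1,1,5]$ and $(u,v)=(0,1)$. Then $\mathfrak{a}_Q=\mathcal{O}_K$, $Q(0,1)=5>0$, and $\mathfrak{b}_{0,1}(Q)=\bigl(\tfrac{1+\sqrt{21}}{2}\bigr)$ has a generator of norm $-5$. Since $t^2-21u^2=-4$ has no solution modulo $3$, $\mathcal{O}_K$ has no unit of norm $-1$. So this ideal is not narrowly principal, and $[\mathfrak{b}_{0,1}(Q)]\neq[\mathfrak{a}_Q]$ in $\mathrm{Cl}^{+}(\mathcal{O}_{K})$.

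The paper's proof misses this only because it writes $\mathrm{Nm}(\mathfrak{a}_Q)=A$ and ``$Q(u,v)/A>0$'', i.e.\ it tacitly assumes $A>0$. For $A<0$ its surjectivity step would produce $Q(u,v)=A\,\mathrm{Nm}(\beta)<0$, outside the domain. You were right to write $\mathrm{Nm}(\mathfrak{a}_Q)=|A|$ and to flag the issue.

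To finish, fix the convention explicitly instead of leaving it open. One option is to add the hypothesis $A>0$. This is harmless for the application, since every $\Gamma_0(N)$-class contains representatives with positive first coefficient. The other option is, for $A<0$, to replace $[\mathfrak{a}_Q]$ by $[\sqrt{D}\,\mathfrak{a}_Q]$. With that convention your argument goes through unchanged:
\begin{itemize}
\item $Q(u,v)>0$ forces $\mathrm{sgn}\,\mathrm{Nm}(u-v\alpha_Q')=\mathrm{sgn}\,A$.
\item Every integral $\mathfrak{b}$ in the class is $\lambda\mathfrak{a}_Q$ with $\lambda\in\mathbb{Z}+\mathbb{Z}\alpha_Q'$ of that norm sign.
\item Two such $\lambda$ giving the same ideal differ by a unit of norm $+1$.
\end{itemize}
That last group is all norm-one units, including $-1\leftrightarrow -I$, not merely the totally positive units as you first wrote.

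One small slip: $uA-v\omega_B'=uA+vB+v\omega_B$ is not in $\mathfrak{a}_Q$ in general, since $B\in\mathfrak{a}_Q$ only when $A\mid B$. What you actually need is $uA-v\omega_B'\in\mathfrak{a}_Q'=\mathbb{Z}A+\mathbb{Z}\omega_B'$, which is immediate.
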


\begin{proof}
Note that
$$A\alpha_{Q}'=\omega_{B}'=\frac{-B-\sqrt{D}}{2}\quad\text{and}\quad\alpha_{Q}'\omega_{B}=\frac{(-B-\sqrt{D})(-B+\sqrt{D})}{4A}=\frac{B^{2}-D}{4A}=C\in\mathbb{Z}.$$

(i) We compute
$$\mathfrak{b}_{u,v}(Q)=(u-v\alpha_{Q}')(A,\omega_{B})=(uA-vA\alpha_{Q}',u\omega_{B}-v\alpha_{Q}'\omega_{B})=(uA-v\omega_{B}',u\omega_{B}-vC).$$
Since $\omega_{B},\omega_{B}'\in\mathcal{O}_{K}$, we conclude that $\mathfrak{b}_{u,v}(Q)$ is an integral ideal.

(ii) Since $\alpha_Q$ and $\alpha_Q'$ are conjugates, we have
$$\mathrm{Nm}(u-v\alpha_{Q}')=(u-v\alpha_{Q}')(u-v\alpha_{Q}).$$
Using $\alpha_{Q}+\alpha_{Q}'=-B/A$ and $\alpha_{Q}\alpha_{Q}'=C/A$, we obtain
$$\mathrm{Nm}(u-v\alpha_{Q}')=u^{2}-uv(\alpha_{Q}+\alpha_{Q}')+v^{2}\alpha_{Q}\alpha_{Q}'=u^{2}+\frac{B}{A}uv+\frac{C}{A}v^{2}=\frac{Q(u,v)}{A}.$$
Moreover, $\mathrm{Nm}(\mathfrak{a}_{Q})=A$. Therefore,
$$\mathrm{Nm}(\mathfrak{b}_{u,v}(Q))=\mathrm{Nm}(u-v\alpha_{Q}')\cdot\mathrm{Nm}(\mathfrak{a}_{Q})=Q(u,v).$$

(iii) If $Q(u,v)>0$, then $\mathrm{Nm}(u-v\alpha_Q')=Q(u,v)/A>0$.
In a real quadratic field, a principal ideal generated by an element of positive norm is narrow principal (replace the generator by its negative if necessary).
Hence, $[(u-v\alpha_{Q}')\mathfrak{a}_{Q}]=[\mathfrak{a}_{Q}]$ in $\mathrm{Cl}^{+}(\mathcal{O}_{K})$.

We next check that the map is well-defined on $\mathbb{Z}^{2}/\mathrm{SL}_{2}(\mathbb{Z})_{Q}$.
Let $M=\left(\begin{smallmatrix} \alpha & \beta \\ \gamma & \delta\end{smallmatrix}\right)\in\mathrm{SL}_{2}(\mathbb{Z})_{Q}$ and set $(u',v'):=(u,v)M$.
Then $M$ fixes both $\alpha_{Q}$ and $\alpha_{Q}'$ under the fractional linear transformation, and $\mathrm{Nm}(\epsilon_{M})=1$, where $\epsilon_{M}=\gamma\alpha_{Q}'+\delta$.
Moreover, one has
$$u'-v'\alpha_{Q}'=\frac{u-v\alpha_{Q}'}{\gamma\alpha_{Q}'+\delta}=\frac{u-v\alpha_{Q}'}{\epsilon_{M}}.$$
Since $(\epsilon_{M})=\mathcal{O}_{K}$,
$$\mathfrak{b}_{u',v'}(Q)=(u'-v'\alpha_{Q}')\mathfrak{a}_{Q}=(u-v\alpha_{Q}')\mathfrak{a}_{Q}=\mathfrak{b}_{u,v}(Q).$$
Thus, the map is well-defined on the quotient.

For surjectivity, let $\mathfrak{b}\subset\mathcal{O}_{K}$ satisfy $[\mathfrak{b}]=[\mathfrak{a}_{Q}]$ in $\mathrm{Cl}^{+}(\mathcal{O}_{K})$.
Then there exists totally positive $\beta\in K^{\times}$ with $\mathfrak{b}=(\beta)\mathfrak{a}_{Q}$.
Since $\mathfrak{b}\subset\mathcal{O}_{K}$, we have $\beta\in\mathfrak{a}_{Q}^{-1}$.
One checks that
$$\mathfrak{a}_{Q}^{-}=\mathbb{Z}+\mathbb{Z}\alpha_{Q}'.$$
Indeed, $1\in\mathfrak{a}_{Q}^{-1}$ and $\alpha_{Q}'\in\mathfrak{a}_{Q}^{-1}$ since $\alpha_Q'A=\omega_{B}'\in\mathcal{O}_{K}$ and $\alpha_Q'\omega_B=C\in\mathbb{Z}$.
$$(A,\omega_{B})(\mathbb{Z}+\mathbb{Z}\alpha_{Q}')=(A,\omega_{B},A\alpha_{Q}',\omega_{B}\alpha_{Q}')=(A,\omega_{B},\omega_{B}',C)$$
contains $B=-(\omega_{B}+\omega_{B}')\in\mathbb{Z}$, hence contains $(A,B,C)=\mathcal{O}_{K}$ because $\gcd(A,B,C)=1$.
Therefore, $\mathfrak{a}_{Q}^{-1}=\mathbb{Z}+\mathbb{Z}\alpha_{Q}'$. This shows that $\beta=u-v\alpha_{Q}'$ for some $u,v\in\mathbb{Z}$, and $\mathfrak{b}=\mathfrak{b}_{u,v}(Q)$.
Finally, by (ii), 
$$0<\mathrm{Nm}(\mathfrak b)=\mathrm{Nm}(\mathfrak{b}_{u,v}(Q))=Q(u,v).$$

Injectivity follows similarly: if $\mathfrak b_{u,v}(Q)=\mathfrak b_{u',v'}(Q)$, then
$$\frac{u-v\alpha_{Q}'}{u'-v'\alpha_{Q}'}\in\mathcal{O}_{K}'\quad\text{and}\quad\mathrm{Nm}\left(\frac{u-v\alpha_{Q}'}{u'-v'\alpha_{Q}'}\right)=.$$
Such a unit corresponds to an element of $\mathrm{PSL}_{2}(\mathbb{Z})_{Q}$; if necessary, composing with $-I\in\mathrm{SL}_{2}(\mathbb{Z})_{Q}$ gives an element of $\mathrm{SL}_2(\mathbb{Z})_{Q}$.
Hence, $(u,v)$ and $(u',v')$ represent the same class in $\mathbb{Z}^{2}/\mathrm{SL}_{2}(\mathbb{Z})_{Q}$.
\end{proof}

We next compare the orbits $\mathbb{Z}^{2}/\Gamma_{0}(N)_{Q}$ and $\mathbb{Z}^{2}/\mathrm{SL}_2(\mathbb{Z})_{Q}$.

\begin{lemma}\label{lem-stabilizer}
    Let $Q\in\mathcal{Q}_{N,D,1}$. Then the map
    \begin{align*}
        \mathbb{Z}^{2}/\Gamma_{0}(N)_{Q}=\mathbb{Z}^{2}/(\mathrm{SL}_{2}(\mathbb{Z})_{Q}\cap\Gamma_{0}(N))&\rightarrow\mathbb{Z}^{2}/\mathrm{SL}_{2}(\mathbb{Z})_{Q}\\
        [(u,v)]&\mapsto[(u,v)]
    \end{align*}
    is surjective, and the inverse image of each element of $\mathbb{Z}^{2}/\mathrm{SL}_{2}(\mathbb{Z})_{Q}$ has cardinality
    $$e_{Q}:=[\mathrm{SL}_{2}(\mathbb{Z})_{Q}:\mathrm{SL}_{2}(\mathbb{Z})_{Q}\cap\Gamma_{0}(N)].$$
\end{lemma}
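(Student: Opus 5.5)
The plan is to treat the statement as a purely group-theoretic orbit count. Write $G:=\mathrm{SL}_{2}(\mathbb{Z})_{Q}$ and $H:=G\cap\Gamma_{0}(N)$; the identity $\Gamma_{0}(N)_{Q}=H$ holds by definition. Both groups act on $\mathbb{Z}^{2}$ on the right by $(u,v)\mapsto(u,v)M$.

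\emph{Surjectivity and the index.} Surjectivity is immediate, since every $G$-orbit is a union of $H$-orbits. The index $e_{Q}$ is finite because $H$ is the intersection of $G$ with the finite-index subgroup $\Gamma_{0}(N)$, so $[G:H]\le[\mathrm{SL}_{2}(\mathbb{Z}):\Gamma_{0}(N)]$.

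\emph{Counting a fibre.} Fix $x=(u,v)\in\mathbb{Z}^{2}$ with $x\neq 0$; the relevant vectors satisfy $Q(u,v)>0$, so this holds automatically. The $G$-orbit $xG$ is identified with $G_{x}\backslash G$, where $G_{x}$ is the stabiliser of $x$ in $G$. Under this identification the $H$-orbits inside $xG$ correspond to the double cosets $G_{x}\backslash G/H$. Hence the fibre over $[x]$ has cardinality $[G:H]=e_{Q}$ as soon as we know $G_{x}=\{I\}$.

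\emph{Freeness of the action (the main point).} To show $G_{x}=\{I\}$, I would reuse the computation in the proof of Lemma~\ref{lem-pairtoideal}(iii), applied to the primitive form $Q$ (or to its primitive part, which has the same stabiliser). For $M=\left(\begin{smallmatrix}\alpha&\beta\\ \gamma&\delta\end{smallmatrix}\right)\in G$ and $(u',v')=(u,v)M$, that computation gives
\[
u'-v'\alpha_{Q}'=\frac{u-v\alpha_{Q}'}{\epsilon_{M}},\qquad \epsilon_{M}=\gamma\alpha_{Q}'+\delta .
\]
If $(u',v')=(u,v)\neq(0,0)$, then $u-v\alpha_{Q}'\neq0$, because $\alpha_{Q}'$ is irrational as $D$ is not a square. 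Therefore $\epsilon_{M}=1$. Since $\alpha_{Q}'\notin\mathbb{Q}$, the equation $\gamma\alpha_{Q}'+\delta=1$ forces $\gamma=0$ and $\delta=1$. Then $\alpha=1$, and because $M$ fixes $\alpha_{Q}'$ under the fractional linear action, $\alpha_{Q}'+\beta=\alpha_{Q}'$, so $\beta=0$. Thus $M=I$.

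Note that $-I\in H$, so the sign ambiguity causes no trouble; $-I$ simply acts as $x\mapsto -x$, which is not a fixed point. The only genuine content is this freeness step, which I expect to be the main obstacle. It reduces to the unit/fractional-linear correspondence already established in Lemma~\ref{lem-pairtoideal}.
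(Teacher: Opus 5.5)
The paper states Lemma~\ref{lem-stabilizer} without proof, so there is no argument to compare step by step. Your orbit--stabiliser argument is correct in substance, but it proves more than the paper's setting needs. For $Q=[Na,b,c]\in\mathcal{Q}_{N,D,1}$, Lemma~\ref{lem-stabform} shows that every automorph has lower-left entry $Nau\equiv 0\pmod N$; here $Q$ is primitive because $D$ is fundamental. Hence $\mathrm{SL}_2(\mathbb{Z})_Q\subseteq\Gamma_0(N)$, so $H=G$, $e_Q=1$, and the map is just the identity bijection. This is exactly how the paper uses the lemma: in the proof of Theorem~\ref{thm-dedekindzeta} it deduces $e_{Q_{\mathcal{C}}}=1$ from Lemma~\ref{lem-stabform}. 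Your double-coset count instead gives the statement for an arbitrary finite-index $H$. In that generality freeness is genuinely needed, and, as you correctly observe, the fibre count must exclude $(0,0)$, whose fibre is a single point. In the paper's situation both points are moot.

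One step needs repair. The identity $u'-v'\alpha_Q'=(u-v\alpha_Q')/\epsilon_M$ holds for the column action $(u',v')^{T}=M(u,v)^{T}$. It does not hold for $(u',v')=(u,v)M$ as you write; the paper's Lemma~\ref{lem-pairtoideal} contains the same slip. Taking norms, the identity would force $Q((u,v)M)=Q(u,v)$, and this fails. For example, $Q=[1,1,-3]\in\mathcal{Q}_{1,13,1}$ is fixed by $M=\left(\begin{smallmatrix}4&9\\3&7\end{smallmatrix}\right)$, yet $Q(1,0)=1$ while $Q(4,9)=-191$.

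The column action is also the one underlying $\zeta_{\mathcal{C}}^{(N)}$: there $(b,d)$ is the second column of $A$, and the stabiliser acts by left multiplication. With this action your count goes through verbatim, using $H\backslash G/G_x$ in place of $G_x\backslash G/H$.

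Alternatively, you can prove freeness without the identity, and for either action. If $M\in G$ fixes a nonzero vector, then $0=\det(M-I)=2-\mathrm{tr}\,M$. So $t=2$ in the parametrization of Lemma~\ref{lem-stabform}, whence $Du^2=t^2-4=0$, so $u=0$ and $M=I$.
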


To decompose the sum over $(u,v)$ according to $(v,N)$ later, we need the following lemma.

\begin{lemma}\label{lem}
    Let $Q=[Na,b,c]\in\mathcal{Q}_{N,D,1}$ and let $d$ be a positive divisor of $N$.
    Put
    \[
    Q^{(d)}:=\left[\frac{N}{d}a,b,cd\right]\in\mathcal{Q}_{N/d,D,1}
    \]
    and set
    \[
    \Gamma_{0}^{0}(N/d,d):=\left\{\left(\begin{smallmatrix}
        \alpha & \beta \\ \gamma & \delta
    \end{smallmatrix}\right)\in\Gamma_{0}(N/d):\beta\equiv 0\pmod{d}\right\}.
    \]
    Then the map
    $$\mathbb{Z}^{2}\rightarrow\mathbb{Z}^{2},\quad(u,v)\mapsto(u,dv)$$
    induces a one-to-one correspondence between
    $$\{(u,v)\in\mathbb{Z}^{2}/\Gamma_{0}^{0}(N/d,d)_{Q^{(d)}}:Q^{(d)}(u,v)>0,~(v,N/d)=1\}$$
    and
    $$\{(u,v)\in\mathbb{Z}^{2}/\Gamma_{0}(N)_{Q}:Q(u,v)>0,~(v,N)=d\}.$$
\end{lemma}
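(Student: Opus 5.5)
The plan is to realize the map $(u,v)\mapsto(u,dv)$ as left multiplication of column vectors by $\delta:=\left(\begin{smallmatrix}1&0\\0&d\end{smallmatrix}\right)$, and to match the two stabilizer groups by conjugation with $\delta$. Throughout, $\mathbb{Z}^{2}/G$ is read as the set of orbits of column vectors $x=(u,v)^{T}$ under $x\mapsto Mx$ with $M\in G$. This is the convention forced by the definition of $\zeta_{\mathcal C}^{(N)}$, where $(b,d)$ is the second column of $A$ and $(Q\circ A)(0,1)=Q(b,d)$. For $M$ in the stabilizer of $Q$ one then has $Q(Mx)=Q(x)$. The proof has three checks: compatibility of the values of the forms, compatibility of the congruence conditions, and compatibility of the groups.

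\emph{Values and congruence conditions.} A direct expansion gives
\[
Q(u,dv)=Nau^{2}+bduv+cd^{2}v^{2}=d\Big(\tfrac{N}{d}au^{2}+buv+cdv^{2}\Big)=d\,Q^{(d)}(u,v).
\]
So $Q^{(d)}(x)=\tfrac1d Q(\delta x)$, and in particular $Q^{(d)}(u,v)>0$ if and only if $Q(u,dv)>0$. For the gcd condition, $\gcd(dv,N)=d\gcd(v,N/d)$, so $\gcd(v,N/d)=1$ is equivalent to $\gcd(dv,N)=d$. Conversely, any $(u,w)$ with $\gcd(w,N)=d$ has $d\mid w$, so $w=dv$ with $\gcd(v,N/d)=1$. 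Hence $x\mapsto\delta x$ is a bijection between the two underlying sets of vectors, before passing to orbits.

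\emph{Groups.} Let $M=\left(\begin{smallmatrix}\alpha&\beta\\ \gamma&\delta_0\end{smallmatrix}\right)$. Then $\delta M\delta^{-1}=\left(\begin{smallmatrix}\alpha&\beta/d\\ d\gamma&\delta_0\end{smallmatrix}\right)$.
\begin{itemize}
\item If $M\in\Gamma_0^0(N/d,d)$, then $d\mid\beta$ and $N\mid d\gamma$, so $\delta M\delta^{-1}\in\Gamma_0(N)$.
\item If $g=\left(\begin{smallmatrix}\alpha&\beta\\ \gamma&\delta_0\end{smallmatrix}\right)\in\Gamma_0(N)$, then $\delta^{-1}g\delta=\left(\begin{smallmatrix}\alpha&d\beta\\ \gamma/d&\delta_0\end{smallmatrix}\right)$. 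Here $\gamma/d\in (N/d)\mathbb{Z}$ and the upper right entry is divisible by $d$, so $\delta^{-1}g\delta\in\Gamma_0^0(N/d,d)$.
\end{itemize}
Thus conjugation by $\delta$ is an isomorphism $\Gamma_0^0(N/d,d)\to\Gamma_0(N)$. Moreover $Q^{(d)}(Mx)=Q^{(d)}(x)$ for all $x$ if and only if $Q(\delta M\delta^{-1}y)=Q(y)$ for all $y\in\delta\mathbb{Q}^{2}=\mathbb{Q}^{2}$. So the isomorphism restricts to $\Gamma_0^0(N/d,d)_{Q^{(d)}}\cong\Gamma_0(N)_Q$.

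\emph{Orbits.} Finally, $\delta(Mx)=(\delta M\delta^{-1})(\delta x)$. Therefore $x\mapsto\delta x$ sends $\Gamma_0^0(N/d,d)_{Q^{(d)}}$-orbits exactly onto $\Gamma_0(N)_Q$-orbits: it is well defined on orbits, and injective on orbits because the group correspondence is bijective. Together with the set bijection above, this gives the claimed one-to-one correspondence.

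The main obstacle is bookkeeping rather than substance. The stabilizer action must be taken on column vectors, since with row vectors the conjugation goes the wrong way and $\gamma/d$ need not be integral. One must also verify that $Q^{(d)}\in\mathcal{Q}_{N/d,D,1}$: its discriminant is $b^{2}-4\tfrac Nd a\cdot cd=D$, and $b\equiv1\pmod{2N}$ implies $b\equiv1\pmod{2N/d}$.
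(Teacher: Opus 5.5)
Your proof is correct and matches the argument the paper relies on. The paper states this lemma without a separate proof, but in the proof of Theorem~\ref{thm-dedekindzeta} it uses exactly your two ingredients: the identity $Q(u,dv)=d\,Q^{(d)}(u,v)$, and the conjugation $T_d\,\Gamma_0^0(N/d,d)_{Q^{(d)}}\,T_d^{-1}=\Gamma_0(N)_Q$ with $T_d=\left(\begin{smallmatrix}1&0\\0&d\end{smallmatrix}\right)$, the stabilizers acting on column vectors as you rightly insist. The one point you leave implicit is that the conditions $\gcd(v,N/d)=1$ and $\gcd(v,N)=d$ are stable under the respective groups, so that the orbit sets are well defined; this is immediate, because the lower-right entry of a matrix in $\Gamma_0(N)$, resp.\ $\Gamma_0(N/d)$, is a unit modulo $N$, resp.\ $N/d$.
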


The next lemma is a congruence property for stabilizers of primitive forms.

\begin{lemma}\label{lem-stabform}
    Let $Q=[A,B,C]\in\mathcal{Q}_{D}^{0}$. Then every $M=\left(\begin{smallmatrix}
        \alpha & \beta \\ \gamma & \delta
    \end{smallmatrix}\right)\in\mathrm{SL}_{2}(\mathbb{Z})_{Q}$ satisfies $\beta\equiv 0\pmod{C}$ and $\gamma\equiv 0\pmod{A}$.
\end{lemma}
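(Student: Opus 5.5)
The plan is to compare the two sides of the invariance $Q\circ M=Q$ directly, coefficient by coefficient. Since $Q\circ M=Q$, the action formula gives
\[
Q(\alpha x+\beta y,\gamma x+\delta y)=Ax^{2}+Bxy+Cy^{2}.
\]
Comparing the coefficients of $x^{2}$ and $y^{2}$ yields
\[
A\alpha^{2}+B\alpha\gamma+C\gamma^{2}=A,\qquad A\beta^{2}+B\beta\delta+C\delta^{2}=C.
\]
Rather than extract divisibility from these quadratic relations alone, I will use the fixed-point description of the stabilizer, which gives linear relations that are easier to handle.

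The key observation is that $M\in\mathrm{SL}_{2}(\mathbb{Z})_{Q}$ corresponds to a matrix in the centralizer of the "matrix of $Q$". Concretely, the classical parametrization of automorphs of a primitive form of discriminant $D$ states that
\[
M=\begin{pmatrix}\frac{t-Bu}{2} & -Cu\\ Au & \frac{t+Bu}{2}\end{pmatrix}
\]
for integers $t,u$ with $t^{2}-Du^{2}=4$. This is exactly the statement that $\epsilon_{M}$ is a norm-one unit, as used in the proof of Lemma~\ref{lem-pairtoideal}. Up to the sign and transpose convention forced by the action $Q(\alpha x+\beta y,\gamma x+\delta y)$, this gives $\beta=\mp Cu$ and $\gamma=\pm Au$, and hence $C\mid\beta$ and $A\mid\gamma$ immediately.

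To prove the parametrization self-containedly, I would proceed in three steps.

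\textbf{Step 1: linear relations from fixed points.} Since $M$ preserves $Q$, it permutes the roots of $Q(x,1)$ or $Q(1,y)$ appropriately. Because $M$ has positive determinant and is real, it fixes each root. Writing this as the vanishing of the quadratic obtained from the fixed-point equation, and using that the minimal polynomial of the root is $Q$ up to scalar, I obtain a rational $\lambda$ with
\[
(\gamma,\ \delta-\alpha,\ -\beta)=\lambda\,(A,B,C)
\]
(or a permutation of these, depending on the convention).

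\textbf{Step 2: integrality of $\lambda$.} This is the main point. Since the left side is an integer vector and $\gcd(A,B,C)=1$, choose integers $x,y,z$ with $xA+yB+zC=1$. Then $\lambda=x\gamma+y(\delta-\alpha)-z\beta\in\mathbb{Z}$, so $\lambda=u$ is an integer.

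\textbf{Step 3: conclusion.} It follows that $\gamma=uA$ and $\beta=-uC$, so $C\mid\beta$ and $A\mid\gamma$ as claimed.

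I expect the main obstacle to be getting the action convention right in Step 1, that is, deciding whether the relation pairs $\gamma$ with $A$ or with $C$. I would check this by verifying with the direct $x^{2}$-coefficient identity $A\alpha^{2}+B\alpha\gamma+C\gamma^{2}=A$ on a concrete automorph.
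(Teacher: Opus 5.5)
Your opening paragraph is the paper's proof. The paper quotes Buell's parametrization $\alpha=\frac{t-Bu}{2}$, $\beta=-Cu$, $\gamma=Au$, $\delta=\frac{t+Bu}{2}$ with $t^{2}-Du^{2}=4$ and reads off the two divisibilities.

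Your Steps 1--3 take a different, self-contained route that proves only what is needed. You first get proportionality $(\gamma,\delta-\alpha,-\beta)=\lambda(A,B,C)$ over $\mathbb{Q}$, then $\lambda\in\mathbb{Z}$ from a B\'ezout relation $xA+yB+zC=1$. This isolates exactly where primitivity enters and avoids the Pell equation entirely, so it is a worthwhile improvement over a bare citation. Your convention worry resolves in your favour. For the paper's action, a root $\theta$ of $Q(x,1)$ fixed by $z\mapsto\frac{\alpha z+\beta}{\gamma z+\delta}$ satisfies $\gamma\theta^{2}+(\delta-\alpha)\theta-\beta=0$, so no permutation of $(A,B,C)$ is needed.

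The justification you give in Step 1 is wrong as written. A real M\"obius map of positive determinant can swap two real points; for example, $S$ swaps $1$ and $-1$. What rules out a swap is exact invariance $Q\circ M=Q$ combined with $\det M=1$. Write $Q=A(x-\theta y)(x-\theta' y)$. A swap would make the induced map $\ell\mapsto\ell\circ M$ on linear forms anti-diagonal in the basis $x-\theta y,\ x-\theta' y$, and invariance then forces its determinant to be $-1$, contradicting $\det M=1$.

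The minimal-polynomial step also needs $\theta\notin\mathbb{Q}$, that is, $D$ not a square. This holds in Section~\ref{sec-dedekindzeta}, where $K$ is a real quadratic field.

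Both issues disappear if you derive Step 1 linearly.
\begin{itemize}
\item With $S=\left(\begin{smallmatrix}A&B/2\\B/2&C\end{smallmatrix}\right)$, invariance says $M^{T}SM=S$, equivalently $SM=M^{-T}S$, where $M^{-T}=\left(\begin{smallmatrix}\delta&-\gamma\\-\beta&\alpha\end{smallmatrix}\right)$.
\item Comparing entries gives $B\gamma=A(\delta-\alpha)$, $A\beta=-C\gamma$ and $B\beta=-C(\delta-\alpha)$.
\item These say precisely that all $2\times 2$ minors of $\left(\begin{smallmatrix}\gamma&\delta-\alpha&-\beta\\A&B&C\end{smallmatrix}\right)$ vanish, so the two rows are proportional. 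Your Step 2 then goes through unchanged.
\end{itemize}
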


\begin{proof}
    Let $M=\left(\begin{smallmatrix}
        \alpha & \beta \\ \gamma & \delta
    \end{smallmatrix}\right)\in\mathrm{SL}_{2}(\mathbb{Z})_{Q}$. It is known that
    $$\alpha=\frac{t-Bu}{2},\quad\beta=-Cu,\quad\gamma=Au\quad\text{and}\quad\delta=\frac{t+Bu}{2}$$
    for some integers $t,u$ satisfying $t^{2}-Du^{2}=4$ (see \cite[pp. 31--32]{B89}). In particular,
    $$\beta=-Cu\equiv 0\pmod{C}\quad\text{and}\quad\gamma=Au\equiv 0\pmod{A}.$$
\end{proof}

\begin{corollary}\label{cor-indexone}
    Let $d$ be any positive divisor of $N$ and let $Q=[Na,b,c]\in\mathcal{Q}_{N,D,1}$. If we put
    $$Q^{(d)}=\left[\frac{N}{d}a,b,cd\right],$$
    then one has
    $$\Gamma_{0}(N/d)_{Q^{(d)}}=\Gamma_{0}^{0}(N/d,d)_{Q^{(d)}},$$
    and hence
    \begin{equation}\label{eq-indexone}
        [\Gamma_{0}(N/d)_{Q^{(d)}}:\Gamma_{0}^{0}(N/d,d)_{Q^{(d)}}]=1.
    \end{equation}
\end{corollary}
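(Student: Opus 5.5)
Since $\Gamma_{0}^{0}(N/d,d)\subset\Gamma_{0}(N/d)$, we always have $\Gamma_{0}^{0}(N/d,d)_{Q^{(d)}}\subset\Gamma_{0}(N/d)_{Q^{(d)}}$. So it suffices to prove the reverse inclusion: every $M=\left(\begin{smallmatrix}\alpha&\beta\\ \gamma&\delta\end{smallmatrix}\right)\in\Gamma_{0}(N/d)$ fixing $Q^{(d)}$ has $\beta\equiv 0\pmod d$. The index statement \eqref{eq-indexone} then follows immediately.

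The natural tool is Lemma \ref{lem-stabform}, applied with $(A,B,C)=(\frac{N}{d}a,b,cd)$. That lemma gives $\beta=-Cu=-cdu\equiv 0\pmod d$, which is exactly what is needed. Note that the hypothesis $M\in\Gamma_{0}(N/d)$ is not even used: only $M\in\mathrm{SL}_{2}(\mathbb{Z})_{Q^{(d)}}$ matters.

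The one point to verify is that Lemma \ref{lem-stabform} applies, i.e.\ that $Q^{(d)}\in\mathcal{Q}_{D}^{0}$ is primitive. Its discriminant is $b^{2}-4\frac{N}{d}a\cdot cd=b^{2}-4Nac=D$, as required. For primitivity:
\begin{itemize}
\item Since $D$ is a fundamental discriminant (the standing hypothesis of Theorem \ref{thm-dedekindzeta}, in force throughout this section), any $g=\gcd(\frac{N}{d}a,b,cd)$ satisfies $g^{2}\mid D$.
\item Because $D\equiv 1\pmod 4$ is fundamental, it is squarefree, so $g=1$.
\end{itemize}

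If one prefers not to rely on fundamentality, the formula for stabilizers cited from \cite{B89} can be rerun for the possibly imprimitive form $gQ'$. Its stabilizer equals that of the primitive form $Q'=[A/g,B/g,C/g]$, which yields $\beta=-(C/g)u$. Divisibility of $\beta$ by $d$ then needs $\gcd(g,d)$ to be handled, so the primitivity route is cleaner.

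The main (mild) obstacle is therefore just confirming primitivity of $Q^{(d)}$. Everything else is a direct citation of Lemma \ref{lem-stabform}.
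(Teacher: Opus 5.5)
Your proposal is correct and matches the paper's proof: both reduce to the inclusion $\Gamma_{0}(N/d)_{Q^{(d)}}\subset\Gamma_{0}^{0}(N/d,d)$ and apply Lemma \ref{lem-stabform} to $Q^{(d)}$, giving $\beta\equiv 0\pmod{cd}$ and hence $\beta\equiv 0\pmod d$. Your explicit check that $Q^{(d)}$ is primitive supplies a hypothesis of Lemma \ref{lem-stabform} that the paper uses without comment; the check works because any common divisor $g$ satisfies $g^{2}\mid D$, and the fundamental discriminant $D\equiv 1\pmod 4$ is squarefree.
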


\begin{proof}
    It suffices to show that
    $$\Gamma_{0}(N/d)_{Q^{(d)}}\subset\Gamma_{0}^{0}(N/d,d).$$
    Let $M=\left(\begin{smallmatrix}
        \alpha & \beta \\ \gamma & \delta
    \end{smallmatrix}\right)\in\Gamma_{0}(N/d)_{Q^{(d)}}$. Then $M\in\mathrm{SL}_{2}(\mathbb{Z})_{Q^{(d)}}$. By Lemma \ref{lem-stabform},
    $$\beta\equiv 0\pmod {cd}\quad\text{and}\quad\gamma\equiv 0\pmod{Na/d}.$$
    This implies that
    $$\beta\equiv 0\pmod{d}\quad\text{and}\quad\gamma\equiv 0\pmod{N/d}.$$
\end{proof}

In the following lemma, we give a factorization of $\mathcal{a}_{Q}$.

\begin{lemma}\label{lem-idealtranslation}
Let $Q=[A,B,C]\in\mathcal{Q}_{D}^{0}$ and set
\[
\mathfrak{a}_{Q}:=\left(A,\frac{-B+\sqrt{D}}{2}\right)\subset\mathcal{O}_{K}.
\]
Assume that $d\mid A$ and $\gcd(B,d)=1$. Put
\[
\omega_{B}:=\frac{-B+\sqrt{D}}{2}.
\]
Then
\begin{equation}\label{eq-idealfactor}
\mathfrak{a}_{Q}=(A,\omega_{B})=\left(\frac{A}{d},\omega_{B}\right)\cdot (d,\omega_{B}).
\end{equation}
In particular,
\begin{equation}\label{eq-idealclasstranslation}
\left[\left(\frac{A}{d},\omega_{B}\right)\right]=[\mathfrak{a}_{Q}]\cdot[(d,\omega_{B})]^{-1}
\quad\text{in }\mathrm{Cl}^{+}(\mathcal{O}_{K}).
\end{equation}
\end{lemma}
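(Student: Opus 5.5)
We prove the product formula by computing the product of the two ideals directly from generators and comparing norms. The product $\left(\frac{A}{d},\omega_{B}\right)\cdot(d,\omega_{B})$ is generated by
\[
A,\quad \tfrac{A}{d}\,\omega_{B},\quad d\,\omega_{B},\quad \omega_{B}^{2}.
\]
Each of these lies in $\mathfrak{a}_{Q}=(A,\omega_{B})$. This is clear for the first three. For the last we use that $\omega_{B}$ is a root of $x^{2}+Bx+AC$, since its trace is $-B$ and its norm is $(B^{2}-D)/4=AC$. Hence
\[
\omega_{B}^{2}=-B\omega_{B}-AC\in(A,\omega_{B}).
\]
This gives the inclusion $\left(\frac{A}{d},\omega_{B}\right)(d,\omega_{B})\subseteq\mathfrak{a}_{Q}$.

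For the reverse inclusion it suffices to show $\omega_{B}$ lies in the product. First note that $\gcd(A/d,d)$ need not be $1$, so we cannot simply combine $A/d$ and $d$ into $1$. Instead we use the coprimality with $B$. Since $\gcd(B,d)=1$ and $\omega_{B}^{2}+B\omega_{B}=-AC$, we get
\[
\omega_{B}(\omega_{B}+B)=-AC\in(A)\subseteq\text{product}.
\]
Choose integers $x,y$ with $xd+yB=1$. Then
\[
\omega_{B}=x\,d\omega_{B}+y\,B\omega_{B}=x\,d\omega_{B}+y\bigl(-\omega_{B}^{2}-AC\bigr),
\]
and each term on the right is in the product, using that $A$ is in the product. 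So $\omega_{B}$ is in the product, and equality holds.

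As a cross-check, the norms agree. By the standard fact for $(A,\omega_{B})$ with $A\mid\mathrm{Nm}(\omega_{B})$, one has $\mathrm{Nm}(A/d,\omega_{B})=A/d$ and $\mathrm{Nm}(d,\omega_{B})=d$, because $A/d$ and $d$ both divide $AC=\mathrm{Nm}(\omega_{B})$. The product of these norms is $A=\mathrm{Nm}(\mathfrak{a}_{Q})$. This gives an alternative route: prove the one inclusion and conclude equality from equal norms.

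The statement \eqref{eq-idealclasstranslation} then follows immediately by passing to classes in $\mathrm{Cl}^{+}(\mathcal{O}_{K})$, since all three ideals are nonzero integral ideals, hence invertible in the Dedekind domain $\mathcal{O}_{K}$. The only delicate point is the reverse inclusion, specifically extracting $\omega_{B}$ without assuming $\gcd(A/d,d)=1$; the Bézout argument with $\gcd(B,d)=1$ handles it.
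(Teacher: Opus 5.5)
Your proof is correct and is essentially the paper's argument: you get the forward inclusion from the four generators $A,\frac{A}{d}\omega_B,d\omega_B,\omega_B^2$ together with $\omega_B^2=-B\omega_B-AC$, and the reverse inclusion from a B\'ezout relation between $B$ and $d$, which extracts $\omega_B$ from $d\omega_B$ and $B\omega_B=-\omega_B^2-AC$. Your norm cross-check is not in the paper, and that alternative route never uses $\gcd(B,d)=1$: it needs only $A/d,\,d\mid AC$ and $\mathcal{O}_K=\mathbb{Z}+\mathbb{Z}\omega_B$, which holds here since $D$ is fundamental, so it shows that hypothesis is superfluous, provided you write $|A|/d$ and $|A|$ for the norms when $A<0$.
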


\begin{proof}
Set
\[
I:=\left(\frac{A}{d},\omega_{B}\right),\qquad J:=(d,\omega_{B}).
\]
First, we show $IJ\subset (A,\omega_{B})$.
Indeed, $IJ$ is generated by
\[
\frac{A}{d}\cdot d=A,\qquad \frac{A}{d}\omega_{B},\qquad d\omega_{B},\qquad \omega_{B}^{2}.
\]
Clearly $A\in (A,\omega_{B})$, $\frac{A}{d}\omega_{B}\in (A,\omega_{B})$, and $d\omega_{B}\in (A,\omega_{B})$.
Moreover, since $D=B^{2}-4AC$, one has
\[
\omega_{B}^{2}+B\omega_{B}+AC=0,
\]
hence $\omega_{B}^{2}\in (A,\omega_{B})$.
Therefore $IJ\subset (A,\omega_{B})$.

Next, we show $(A,\omega_{B})\subset IJ$.
Since $A=\frac{A}{d}\cdot d\in IJ$, it remains to show $\omega_{B}\in IJ$.
Choose integers $x,y$ such that $xB+yd=1$.
Then
\begin{equation}\label{eq-omegalinear}
\omega_{B}=xB\omega_{B}+yd\,\omega_{B}.
\end{equation}
Here $d\omega_{B}\in IJ$ by definition.
Also, using $\omega_{B}^{2}+B\omega_{B}+AC=0$, we obtain
\[
B\omega_{B}=-\omega_{B}^{2}-AC.
\]
Since $\omega_{B}^{2}\in IJ$ (because $\omega_{B}\in I$ and $\omega_{B}\in J$) and $AC\in IJ$ (because $A\in IJ$),
we have $B\omega_{B}\in IJ$.
Thus, the right-hand side of \eqref{eq-omegalinear} lies in $IJ$, and hence $\omega_{B}\in IJ$.
Therefore $(A,\omega_{B})\subset IJ$.

Combining the two inclusions gives \eqref{eq-idealfactor}, and \eqref{eq-idealclasstranslation} follows immediately.
\end{proof}

With these auxiliary results in place, we now proceed to the proof of Theorem \ref{thm-dedekindzeta}.

\begin{proof}[Proof of Theorem \ref{thm-dedekindzeta}]
From the proposition given in \cite[p. 505]{GKZ87}, we see that the inclusion map $\mathcal{Q}_{N,D,1}\hookrightarrow\mathcal{Q}_{D}^{0}$ induces a bijection
\begin{equation}\label{eq-gkzbij}
    \mathcal{Q}_{N,D,1}/\Gamma_{0}(N)\xrightarrow{\sim}\mathcal{Q}_{D}^{0}/\mathrm{SL}_{2}(\mathbb{Z}).
\end{equation}

Since $D\equiv 1\pmod{4N}$, we have $D\equiv 1\pmod{4}$, and hence
$$\mathcal{O}_{K}=\mathbb{Z}\left[\frac{1+\sqrt{D}}{2}\right].$$
We denote by $\mathrm{Cl}^{+}(\mathcal{O}_{K})$ the narrow ideal class group of $\mathcal{O}_{K}$.
It is known that the map
$$Q=[A,B,C]\mapsto\mathfrak{a}_{Q}:=\left(A,\frac{-B+\sqrt{D}}{2}\right)\subset\mathcal{O}_{K}$$
induces a one-to-one correspondence
$$\mathcal{Q}_{D}^{0}/\mathrm{SL}_{2}(\mathbb{Z})\xrightarrow{\sim} \mathrm{Cl}^{+}(\mathcal{O}_{K}).$$
(see, for example, \cite{FT93}). Composing this with \eqref{eq-gkzbij} gives a map
$$\Theta:\ \mathcal{Q}_{N,D,1}/\Gamma_{0}(N)\xrightarrow{\sim} \mathrm{Cl}^{+}(\mathcal{O}_{K}).$$

Now let $\mathcal{C}\in\mathcal{Q}_{N,D,1}/\Gamma_{0}(N)$, and fix a representative $Q_{\mathcal{C}}\in\mathcal{C}$. Then $Q_{\mathcal{C}}\in\mathcal{Q}_{D}^{0}$. We define the \emph{(narrow) partial Dedekind zeta function}
attached to the narrow ideal class $[\mathfrak{a}_{Q_{\mathcal{C}}}]\in\mathrm{Cl}^{+}(\mathcal O_K)$ by
\begin{equation}\label{eq:def-narrow-partial-zeta}
\zeta_{[\mathfrak{a}_{Q_{\mathcal{C}}}]}(s)
:=\sum\limits_{\substack{\mathfrak{b}\subset\mathcal{O}_{K} \\ [\mathfrak{b}]=[\mathfrak{a}_{Q_{\mathcal{C}}}]~\text{in}~\mathrm{Cl}^{+}(\mathcal{O}_{K})}}
\mathrm{Nm}(\mathfrak b)^{-s},
\qquad \mathrm{Re}(s)>1,
\end{equation}
where the sum ranges over the integral ideals in the narrow class of $\mathfrak{a}_{Q_{\mathcal{C}}}$.

By Lemmas \ref{lem-pairtoideal} and \ref{lem-stabilizer},
\begin{align}
    \sum\limits_{\substack{(u,v)\in\mathbb{Z}^{2}/\Gamma_{0}(N)_{Q_{\mathcal{C}}} \\ Q_{\mathcal{C}}(u,v)>0}}Q_{\mathcal{C}}(u,v)^{-s}
    &=e_{Q_{\mathcal{C}}}\sum\limits_{\substack{(u,v)\in\mathbb{Z}^{2}/\mathrm{SL}_{2}(\mathbb{Z})_{Q_{\mathcal{C}}} \\ Q_{\mathcal{C}}(u,v)>0}}Q_{\mathcal{C}}(u,v)^{-s}\nonumber\\
    &=e_{Q_{\mathcal{C}}}\sum\limits_{\substack{\mathfrak{b}\subset\mathcal{O}_{K} \\ \mathfrak{b}\sim_{+}\mathfrak{a}_{Q_{\mathcal{C}}}}}
    \mathrm{Nm}(\mathfrak{b})^{-s}
    =e_{Q_{\mathcal{C}}}\cdot\zeta_{[\mathfrak{a}_{Q_{\mathcal{C}}}]}(s).\label{eq-classidealzeta}
\end{align}

For a positive divisor $d$ of $N$, set
$$T_{d}:=\left(\begin{smallmatrix}
    1 & 0 \\ 0 & d
\end{smallmatrix}\right),\quad\Gamma_{0}^{0}(N/d,d):=\left\{\left(\begin{smallmatrix}
    \alpha & \beta \\ \gamma & \delta
\end{smallmatrix}\right)\in\Gamma_{0}(N/d):\beta\equiv 0\pmod{d}\right\}.$$
Define
$$Q_{\mathcal{C}}^{(d)}:=\left[\frac{N}{d}a,b,cd\right]\in\mathcal{Q}_{N/d,D,1}.$$
Since $\Gamma_{0}(N/d)$ acts on $\mathcal{Q}_{N/d,D,1}$ and $\Gamma_{0}^{0}(N/d,d)$ is a subgroup of $\Gamma_{0}(N/d)$, we get an action of $\Gamma_{0}^{0}(N/d,d)$ by restriction, and so the stabilizer $\Gamma_{0}^{0}(N/d,d)_{Q_{\mathcal{C}}^{(d)}}$ makes sense. Moreover, the conjugation by $T_{d}$ identifies this stabilizer with $\Gamma_{0}(N)_{Q_{\mathcal{C}}}$:
$$T_{d}\Gamma_{0}^{0}(N/d,d)_{Q_{\mathcal{C}}^{(d)}}T_{d}^{-1}=\Gamma_{0}(N)_{Q_{\mathcal{C}}}.$$

Note that
$$Q_{\mathcal{C}}(u,dv)=d\cdot Q_{\mathcal{C}}^{(d)}(u,v)$$
for all integers $u,v$. This, together with Lemma \ref{lem}, shows that
\begin{align*}
    \sum\limits_{\substack{(u,v)\in\mathbb{Z}^{2}/\Gamma_{0}(N)_{Q_{\mathcal{C}}} \\ Q_{\mathcal{C}}(u,v)>0}}&Q_{\mathcal{C}}(u,v)^{-s}\\
    &=\sum_{d\mid N}\sum\limits_{\substack{(u,v)\in\mathbb{Z}^{2}/\Gamma_{0}(N)_{Q_{\mathcal{C}}} \\ Q_{\mathcal{C}}(u,v)>0 \\ (v,N)=d}}Q_{\mathcal{C}}(u,v)^{-s}\\
    &=\sum_{d\mid N}\sum\limits_{\substack{(u,v)\in\mathbb{Z}^{2}/\Gamma_{0}^{0}(N/d,d)_{Q_{\mathcal{C}}^{(d)}} \\ Q_{\mathcal{C}}^{(d)}(u,v)>0 \\ (v,N/d)=1}}Q_{\mathcal{C}}(u,dv)^{-s}\\
    &=\sum_{d\mid N}d^{-s}\sum\limits_{\substack{(u,v)\in\mathbb{Z}^{2}/\Gamma_{0}^{0}(N/d,d)_{Q_{\mathcal{C}}^{(d)}} \\ Q_{\mathcal{C}}^{(d)}(u,v)>0 \\ (v,N/d)=1}}Q_{\mathcal{C}}^{(d)}(u,v)^{-s}\\
    &=\sum_{d\mid N}\frac{d^{-s}}{[\Gamma_{0}(N/d)_{Q_{\mathcal{C}}^{(d)}}:\Gamma_{0}^{0}(N/d,d)_{Q_{\mathcal{C}}^{(d)}}]}\sum\limits_{\substack{(u,v)\in\mathbb{Z}^{2}/\Gamma_{0}(N/d)_{Q_{\mathcal{C}}^{(d)}} \\ Q_{\mathcal{C}}^{(d)}(u,v)>0 \\ (v,N/d)=1}}Q_{\mathcal{C}}^{(d)}(u,v)^{-s}.
\end{align*}
Denoting by $\mathcal{C}^{(d)}$ the $\Gamma_{0}(N/d)$-orbit of $Q_{\mathcal{C}}^{(d)}$, we obtain
\begin{equation}\label{eq-zetacomb}
    \sum\limits_{\substack{(u,v)\in\mathbb{Z}^{2}/\Gamma_{0}(N)_{Q_{\mathcal{C}}} \\ Q_{\mathcal{C}}(u,v)>0}}Q_{\mathcal{C}}(u,v)^{-s}
    =\sum_{d\mid N}\frac{d^{-s}}{[\Gamma_{0}(N/d)_{Q_{\mathcal{C}}^{(d)}}:\Gamma_{0}^{0}(N/d,d)_{Q_{\mathcal{C}}^{(d)}}]}\zeta_{\mathcal{C}^{(d)}}^{(N/d)}(s).
\end{equation}

By Corollary \ref{cor-indexone}, \eqref{eq-zetacomb} becomes
\begin{equation}\label{eq-indexremoved}
    \sum\limits_{\substack{(u,v)\in\mathbb{Z}^{2}/\Gamma_{0}(N)_{Q_{\mathcal{C}}} \\ Q_{\mathcal{C}}(u,v)>0}}Q_{\mathcal{C}}(u,v)^{-s}
    =\sum_{d\mid N}d^{-s}\zeta_{\mathcal{C}^{(d)}}^{(N/d)}(s).
\end{equation}

Again, let $\mathcal{C}\in\mathcal{Q}_{N,D,1}/\Gamma_{0}(N)$ and fix $Q_{\mathcal{C}}=[Na,b,c]\in\mathcal{C}$.
For $d\mid N$, set
\[
\mathfrak{d}_{d}:=(d,\omega)\subset\mathcal{O}_{K},
\]
where
$$\omega:=\frac{-1+\sqrt{D}}{2}\in\mathcal{O}_{K}.$$
Since $b\equiv 1\pmod{2N}$ and $d\mid N$, we have $b\equiv 1\pmod{2d}$. Thus,
$$\frac{-b+\sqrt{D}}{2}-\omega=\frac{1-b}{2}\in d\mathbb{Z},$$
and hence
\begin{equation}\label{eq-omegacong}
(d,\tfrac{-b+\sqrt{D}}{2})=(d,\omega)=\mathfrak{d}_{d}.
\end{equation}
Recall that
\[
Q_{\mathcal{C}}^{(d)}:=\left[\frac{N}{d}a,b,cd\right]\in\mathcal{Q}_{N/d,D,1},
\qquad
\mathcal{C}^{(d)}:=Q_{\mathcal{C}}^{(d)}\cdot\Gamma_{0}(N/d)\in \mathcal{Q}_{N/d,D,1}/\Gamma_{0}(N/d).
\]
Since $b\equiv 1\pmod{2N}$, we have $b\equiv 1\pmod{d}$ and hence $\gcd(b,d)=1$.
Applying Lemma \ref{lem-idealtranslation} with $A=Na$ and $B=b$, and using \eqref{eq-omegacong}, we obtain
\begin{equation}\label{eq-classtranslatefixed}
[\mathfrak{a}_{Q_{\mathcal{C}}^{(d)}}]=[\mathfrak{a}_{Q_{\mathcal{C}}}]\cdot [\mathfrak{d}_{d}]^{-1}
\quad\text{in }\mathrm{Cl}^{+}(\mathcal{O}_{K}).
\end{equation}
In particular, the assignment $\mathcal{C}\mapsto \mathcal{C}^{(d)}$ is a bijection
\begin{equation}\label{eq-CtoCdbijection}
\mathcal{Q}_{N,D,1}/\Gamma_{0}(N)\ \xrightarrow{\sim}\ \mathcal{Q}_{N/d,D,1}/\Gamma_{0}(N/d),
\qquad \mathcal{C}\longmapsto \mathcal{C}^{(d)},
\end{equation}
since it corresponds, under $\Theta$, to multiplication by the fixed class $[\mathfrak{d}_{d}]^{-1}$.

Summing \eqref{eq-indexremoved} over $\mathcal{C}\in\mathcal{Q}_{N,D,1}/\Gamma_{0}(N)$ gives
\begin{align}
\sum_{\mathcal{C}\in\mathcal{Q}_{N,D,1}/\Gamma_{0}(N)}
\sum_{\substack{(u,v)\in\mathbb{Z}^{2}/\Gamma_{0}(N)_{Q_{\mathcal{C}}} \\ Q_{\mathcal{C}}(u,v)>0}}
Q_{\mathcal{C}}(u,v)^{-s}
&=\sum_{\mathcal{C}}\sum_{d\mid N}d^{-s}\,\zeta_{\mathcal{C}^{(d)}}^{(N/d)}(s)\nonumber\\
&=\sum_{d\mid N}d^{-s}\sum_{\mathcal{C}}\zeta_{\mathcal{C}^{(d)}}^{(N/d)}(s).\label{eq-swapsums}
\end{align}
Finally, by the bijection \eqref{eq-CtoCdbijection}, we may re-index the inner sum:
\[
\sum_{\mathcal{C}\in\mathcal{Q}_{N,D,1}/\Gamma_{0}(N)}\zeta_{\mathcal{C}^{(d)}}^{(N/d)}(s)
=
\sum_{\mathcal{D}\in\mathcal{Q}_{N/d,D,1}/\Gamma_{0}(N/d)}\zeta_{\mathcal{D}}^{(N/d)}(s)
=:\zeta_{N/d,D,1}(s).
\]
Substituting this into \eqref{eq-swapsums}, we obtain the explicit linear combination
\begin{equation}\label{eq-finallinearcomb}
\sum_{\mathcal{C}\in\mathcal{Q}_{N,D,1}/\Gamma_{0}(N)}
\sum_{\substack{(u,v)\in\mathbb{Z}^{2}/\Gamma_{0}(N)_{Q_{\mathcal{C}}} \\ Q_{\mathcal{C}}(u,v)>0}}
Q_{\mathcal{C}}(u,v)^{-s}
=
\sum_{d\mid N} d^{-s}\,\zeta_{N/d,D,1}(s).
\end{equation}

Recall from \eqref{eq-classidealzeta} that
$$e_{Q_{\mathcal{C}}}\zeta_{[\mathfrak{a}_{Q_{\mathcal{C}}}]}(s)=\sum\limits_{\substack{(u,v)\in\mathbb{Z}^{2}/\Gamma_{0}(N)_{Q_{\mathcal{C}}} \\ Q_{\mathcal{C}}(u,v)>0}}Q_{\mathcal{C}}(u,v)^{-s},$$
where $e_{Q_{\mathcal{C}}}=[\mathrm{SL}_{2}(\mathbb{Z})_{Q_{\mathcal{C}}}:\mathrm{SL}_{2}(\mathbb{Z})_{Q_{\mathcal{C}}}\cap\Gamma_{0}(N)]=[\mathrm{SL}_{2}(\mathbb{Z})_{Q_{\mathcal{C}}}:\Gamma_{0}(N)_{Q_{\mathcal{C}}}]$. By Lemma \ref{lem-stabform}, $e_{Q_{\mathcal{C}}}=1$. It follows that
$$\zeta_{[\mathfrak{a}_{Q_{\mathcal{C}}}]}(s)=\sum\limits_{\substack{(u,v)\in\mathbb{Z}^{2}/\Gamma_{0}(N)_{Q_{\mathcal{C}}} \\ Q_{\mathcal{C}}(u,v)>0}}Q_{\mathcal{C}}(u,v)^{-s}.$$
By Lemma \ref{lem-pairtoideal} and \eqref{eq-finallinearcomb}, we obtain
\begin{align*}
    \zeta_{K}(s)
    &=\sum_{[\mathfrak{a}]\in\mathrm{Cl}^{+}(\mathcal{O}_{K})}\zeta_{[\mathfrak{a}]}(s)
    =\sum_{\mathcal{C}\in\mathcal{Q}_{N,D,1}/\Gamma_{0}(N)}\zeta_{[\mathfrak{a}_{Q_{\mathcal{C}}}]}(s)\\
    &=\sum_{\mathcal{C}\in\mathcal{Q}_{N,D,1}/\Gamma_{0}(N)}\sum_{\substack{(u,v)\in\mathbb{Z}^{2}/\Gamma_{0}(N)_{Q_{\mathcal{C}}} \\ Q_{\mathcal{C}}(u,v)>0}}Q_{\mathcal{C}}(u,v)^{-s}
    =\sum_{d\mid N} d^{-s}\,\zeta_{N/d,D,1}(s).
\end{align*}
Similarly,
\[
    \zeta_{K}(s)=\sum_{d\mid N} d^{-s}\,\zeta_{N/d,D,-1}(s).
\]
Hence,
\begin{equation}\label{eq-dedekindformzetasum}
    \zeta_{K}(k)=\frac{1}{2}\sum_{d\mid N}d^{-k}(\zeta_{N/d,D,1}(k)+\zeta_{N/d,D,-1}(k)).
\end{equation}

Next we fix $1\leq d\mid N$ and analyze the quantity $\zeta_{N/d,D,1}(k)+\zeta_{N/d,D,-1}(k)$. By Lemma \ref{lem-frickeinv}, the form $f_{k,N/d,D,1}^{+}$ lies in $S_{2k}^{+}(\Gamma_{0}(N/d))$. However, by assumption, $S_{2k}^{+}(\Gamma_{0}(N/d))=\{0\}$. Thus, $f_{k,N/d,D,1}^{+}=0$, and hence $r_{f_{k,N/d,D,1}^{+}}(I)(X)=0$. In particular, the leading coefficient of $r_{f_{k,N/d,D,1}^{+}}$ vanishes. As mentioned in the proof of Theorem \ref{thm-zetaformula}, the even-degree terms of $r_{f_{k,N/d,D,1}^{+}}^{+}(I)(X)+r_{f_{k,N/d,D,1}^{-}}^{-}(I)(X)$ coincide with those of $r_{f_{k,N/d,D,1}}(I)(X)$. Using Theorem \ref{thm-period} and the assumption that $k$ is even, we have
\begin{equation*}
    \begin{split}
        0=\sum\limits_{\substack{[Na/d,b,c]\in\mathcal{Q}_{N/d,D,1} \\ a>0>c}}\left(\frac{Na}{d}\right)^{k-1}-&\sum\limits_{\substack{[Na/d,b,c]\in\mathcal{Q}_{N/d,D,1} \\ a<0<c}}\left(\frac{Na}{d}\right)^{k-1}\\
        &-\frac{d\pi}{N(2k-1)c_{k,D}}\cdot\frac{\zeta_{N/d,D,1}(k)+\zeta_{N/d,D,-1}(k)}{\zeta(2k)\prod_{p\mid (N/d)}(1-p^{-2k})}.
    \end{split}
\end{equation*}
It follows that
\begin{align*}
    \zeta_{N/d,D,1}(k)+\zeta_{N/d,D,-1}(k)
    &=\frac{N(2k-1)c_{k,D}\zeta(2k)}{d\pi}\prod_{p\mid (N/d)}(1-p^{-2k})\\
    &\quad\times\left(\sum\limits_{\substack{[Na/d,b,c]\in\mathcal{Q}_{N/d,D,1} \\ a>0>c}}\left(\frac{Na}{d}\right)^{k-1}-\sum\limits_{\substack{[Na/d,b,c]\in\mathcal{Q}_{N/d,D,1} \\ a<0<c}}\left(\frac{Na}{d}\right)^{k-1}\right)\\
    &=d^{-k}N^{k}(2k-1)\binom{2k-2}{k-1}D^{1/2-k}\zeta(2k)\prod_{p\mid(N/d)}(1-p^{-2k})\\
    &\qquad\qquad\qquad\times\left(\sum\limits_{\substack{[Na/d,b,c]\in\mathcal{Q}_{N/d,D,1} \\ a>0>c}}a^{k-1}-\sum\limits_{\substack{[Na/d,b,c]\in\mathcal{Q}_{N/d,D,1} \\ a<0<c}}a^{k-1}\right).
\end{align*}
Inserting this into \eqref{eq-dedekindformzetasum}, we obtain
\begin{align*}
    \zeta_{K}(k)
    &=\frac{1}{2}\sum_{d\mid N}d^{-k}\cdot d^{-k}N^{k}(2k-1)\binom{2k-2}{k-1}D^{1/2-k}\zeta(2k)\prod_{p\mid(N/d)}(1-p^{-2k})\\
    &\qquad\qquad\qquad\qquad\qquad\qquad\times\left(\sum\limits_{\substack{[Na/d,b,c]\in\mathcal{Q}_{N/d,D,1} \\ a>0>c}}a^{k-1}-\sum\limits_{\substack{[Na/d,b,c]\in\mathcal{Q}_{N/d,D,1} \\ a<0<c}}a^{k-1}\right)\\
    &=\frac{1}{2}(2k-1)D^{1/2-k}\zeta(2k)N^{k}\binom{2k-2}{k-1}\\
    &\quad\times\sum_{d\mid N}d^{-2k}\prod_{p\mid(N/d)}(1-p^{-2k})\left(\sum\limits_{\substack{[Na/d,b,c]\in\mathcal{Q}_{N/d,D,1} \\ a>0>c}}a^{k-1}-\sum\limits_{\substack{[Na/d,b,c]\in\mathcal{Q}_{N/d,D,1} \\ a<0<c}}a^{k-1}\right).
\end{align*}

It remains to rewrite the inner sums over $\mathcal{Q}_{N/d,D,1}$ for each positive divisor $d$ of $N$. To this end, fix $d\mid N$ and define the following three sets:
\begin{align*}
    \mathcal{Q}&:=\{[Na/d,b,c]\in\mathcal{Q}_{N/d,D,1}:a>0>c\},\\
    \mathcal{T}&:=\{(a,b,c)\in\mathbb{Z}^{3}:b^{2}-4Nac/d=D,~b\equiv 1\pmod{2N/d},~a>0>c\},\\
    \mathcal{P}&:=\left\{(b,a)\in\mathbb{Z}\times\mathbb{Z}_{>0}:|b|<\sqrt{D},~b\equiv 1\pmod{2N/d},~a~\bigg|~\frac{d(D-b^{2})}{4N}\right\}.
\end{align*}
Then the map $\mathcal{Q}\rightarrow\mathcal{T}$, $[Na/d,b,c]\mapsto(a,b,c)$ is a bijection. Furthermore, the map
$\mathcal{T}\rightarrow\mathcal{P}$, $(a,b,c)\mapsto (b,a)$ is also a bijection. Consequently,
\[
\sum\limits_{\substack{[Na/d,b,c]\in\mathcal{Q}_{N/d,D,1} \\ a>0>c}}a^{k-1}
=\sum\limits_{\substack{|b|<\sqrt{D} \\ b\equiv 1\pmod{2N/d}}}\sum\limits_{\substack{a \big| \frac{d(D-b^{2})}{4N} \\ a>0}}a^{k-1}
=\sum\limits_{\substack{|b|<\sqrt{D} \\ b\equiv 1\pmod{2N/d}}}\sigma_{k-1}\left(\frac{d(D-b^{2})}{4N}\right).
\]
A similar argument shows that
\begin{align*}
    \sum\limits_{\substack{[Na/d,b,c]\in\mathcal{Q}_{N/d,D,1} \\ a<0<c}}a^{k-1}
    &=\sum\limits_{\substack{|b|<\sqrt{D} \\ b\equiv 1\pmod{2N/d}}}\sum\limits_{\substack{a \big| \frac{d(D-b^{2})}{4N} \\ a<0}}a^{k-1}\\
    &=(-1)^{k-1}\sum\limits_{\substack{|b|<\sqrt{D} \\ b\equiv 1\pmod{2N/d}}}\sum\limits_{\substack{a \big| \frac{d(D-b^{2})}{4N} \\ a>0}}a^{k-1}\\
    &=-\sum\limits_{\substack{|b|<\sqrt{D} \\ b\equiv 1\pmod{2N/d}}}\sigma_{k-1}\left(\frac{d(D-b^{2})}{4N}\right).
\end{align*}
Therefore,
\begin{align*}
    \zeta_{K}(k)
    &=(2k-1)D^{1/2-k}\zeta(2k)N^{k}\binom{2k-2}{k-1}\\
    &\quad\times\sum_{d\mid N}d^{-2k}\prod_{p\mid(N/d)}(1-p^{-2k})
    \sum\limits_{\substack{|b|<\sqrt{D} \\ b\equiv 1\pmod{2N/d}}}\sigma_{k-1}\left(\frac{d(D-b^{2})}{4N}\right).
\end{align*}
This completes the proof.
\end{proof}


\begin{thebibliography}{99}

\bibitem{AtkinLehner}
A.~O.~L.~Atkin and J.~Lehner,
\newblock \emph{Hecke operators on $\Gamma_0(m)$},
\newblock \emph{Mathematische Annalen} \textbf{185} (1970), 134--160.
\bibitem{B89} D. A. Buell, \textit{Binary quadratic forms}, Springer, New York, 1989.

\bibitem{CK13} S. Choi and C. H. Kim, \textit{Basis for the space of weakly holomorphic modular forms in higher level cases}, J. Number Theory {\bf 133} (2013), no. 4, 1300--1311.

\bibitem{FT93} A. Fr\"{o}hlich and M. J. Taylor, \textit{Algebraic number theory}, Cambridge Studies in Advanced Mathematics, 27, Cambridge Univ. Press, Cambridge, 1993.

\bibitem{GKZ87} B. H. Gross, W. Kohnen, and D. B. Zagier, \textit{Heegner points and derivatives of $L$-series. II}, Math. Ann. {\bf 278} (1987), no. 1-4, 497--562.

\bibitem{Hecke}
E.~Hecke,
\newblock \emph{Vorlesungen \"uber die Theorie der algebraischen Zahlen},
\newblock Akademische Verlagsgesellschaft, Leipzig, 1923.

\bibitem{K92} A. W. Knapp, \textit{Elliptic curves}, Mathematical Notes, 40, Princeton Univ. Press, Princeton, NJ, 1992.

\bibitem{KZ84} W. Kohnen and D. B. Zagier, \textit{Modular forms with rational periods}, in: Modular forms (Durham, 1983), 197--249, Ellis Horwood Ser. Math. Appl.: Statist. Oper. Res., Horwood, Chichester, 1984.

\bibitem{ManinPeriods}
Y.~I.~Manin,
\newblock \emph{Periods of parabolic forms and $p$-adic Hecke series},
\newblock in: \emph{Arithmetic of Complex Manifolds (Erlangen, 1988)},
World Scientific, 1989.

\bibitem{PP13} V. Pa\c{s}ol and A. A. Popa, \textit{Modular forms and period polynomials}, Proc. Lond. Math. Soc. (3) {\bf 107} (2013), no. 4, 713--743.

\bibitem{Z75} D. B. Zagier, \textit{Modular forms associated to real quadratic fields}, Invent. Math. {\bf 30} (1975), no. 1, 1--46. 
\bibitem{ZagierZetaQuadratic}
D.~Zagier,
\newblock \emph{Zetafunktionen und quadratische K\"orper},
\newblock in: \emph{Zetafunktionen und quadratische K\"orper} (Hochschultext),
Springer, Berlin--Heidelberg, 1981.
\end{thebibliography}
\end{document}